\title{Structural Decomposition of Monomial Resolutions}
\author{Guillermo Alesandroni}
\address{Department of Mathematics, Wake Forest University, 1834 Wake Forest Rd, Winston-Salem, NC 27109}
\email{alesangc@wfu.edu}
\newtheorem{theorem}{Theorem}[section]
\newtheorem{proposition}[theorem]{Proposition}
\newtheorem{corollary}[theorem]{Corollary}
\newtheorem{lemma}[theorem]{Lemma}
\newtheorem{conjecture}[theorem]{Conjecture}
\theoremstyle{definition}
\newtheorem{definition}[theorem]{Definition}
\newtheorem{example}[theorem]{Example}
\newtheorem{construction}[theorem]{Construction}
\DeclareMathOperator{\betti}{b}
\DeclareMathOperator{\pd}{pd}
\DeclareMathOperator{\mdeg}{mdeg}
\DeclareMathOperator{\lcm}{lcm}
\DeclareMathOperator{\hdeg}{hdeg}
\DeclareMathOperator{\rank}{rank}
\begin{document}
\maketitle
\begin{abstract}
We express the multigraded Betti numbers of an arbitrary monomial ideal in terms of the multigraded Betti numbers of two basic classes of ideals. This decomposition has multiple applications. In some concrete cases, we use it to construct minimal resolutions of classes of monomial ideals; in other cases, we use it to compute projective dimensions. To illustrate the effectiveness of the structural decomposition,  we give a new proof of a classic theorem by Charalambous that states the following: let $k$ be a field, and $M$ an Artinian monomial ideal in $S=k[x_1,\ldots,x_n]$; then, for all $i$, $\betti_i(S/M) \geq {n \choose i }$. 
 \end{abstract}
 
 \section{Introduction}%1
The problem of finding the minimal resolution of an arbitrary monomial ideal in closed form has been deemed utopic by many a mathematician. As a consequence, people have tried to restrict the study of minimal resolutions to particular classes of ideals. Borel ideals, minimally resolved by the Eliahou-Kervaire resolution [EK]; generic ideals, minimally resolved by the Scarf complex [BPS]; and dominant ideals, minimally resolved by the Taylor resolution [Al], are examples of this restrictive approach.
 
 In the first half of this paper, however, we turn to the general problem, and decompose the minimal resolution of an arbitrary monomial ideal in terms of the minimal resolutions of two basic classes that we call dominant, and purely nondominant ideals. More precisely, we express the multigraded Betti numbers of an ideal as the sum of the multigraded Betti numbers of some dominant and some purely nondominant ideals. Since dominant ideals are minimally resolved by their Taylor resolutions, our decomposition reduces the study of minimal monomial resolutions to the study of minimal resolutions of purely nondominant ideals.
 
 Unfortunately, the resolutions of purely nondomiant ideals involve the same challenges that we encounter in the general context. Some of these difficulties are the existence of ghost terms, characteristic dependence, and the striking fact that some of the simplest purely nondominant ideals cannot be minimally resolved by any subcomplex of the Taylor resolution. Thus, in the second half of this work we focus our efforts on one particular case: monomial ideals whose structural decomposition has no purely nondominant part. As a result of this study, we obtain the multigraded Betti numbers of two families that we call $2$-semidominant and almost generic ideals.
 
 The structural decomposition is also a useful tool to compute projective dimensions. We prove, for instance, that if an ideal $M$ satisfies certain conditions, $\pd(S/M)=2$, and, under some other conditions, $\pd(S/M)=n$, where $n$ is the number of variables in the polynomial ring. Another result, also related to projective dimensions, is a new proof of a classic theorem of Charalambous [Ch] (see also [Pe, Corollary 21.6]), stating: let $k$ be a field, and $M$ an Artinian monomial ideal in $S=k[x_1,\ldots,x_n]$; then, for all $i$, $\betti_i(S/M) \geq {n \choose i }$. While the original proof relies on the radical of an ideal, ours is based on the structural decomposition.
  
 The organization of the article is as follows. Section 2 is about background and notation. Sections 3 and 4 are technical. They contain some isomorphism theorems, as well as the structural decomposition theorems advertised above. In section 5, we compute the multigraded Betti numbers of two families of ideals. In section 6, we compute projective dimensions. Section 7 is the conclusion; it includes some comments, questions, and conjectures.
 
 \section{Background and Notation}%2
 Throughout this paper $S$ represents a polynomial ring over an arbitrary field $k$, in a finite number variables. The letter $M$ always denotes a monomial ideal
in $S$. With minor modifications, the constructions that we give below can be found in [Me,Pe].
 
\begin{construction}
Let $M$ be generated by a set of monomials $\{l_1,\ldots,l_q\}$. For every subset $\{l_{i_1},\ldots,l_{i_s}\}$ of $\{l_1,\ldots,l_q\}$, with $1\leq i_1<\ldots<i_s\leq q$, 
we create a formal symbol $[l_{i_1},\ldots,l_{i_s}]$, called a \textbf{Taylor symbol}. The Taylor symbol associated to $\{\}$ is denoted by $[\varnothing]$.
For each $s=0,\ldots,q$, set $F_s$ equal to the free $S$-module with basis $\{[l_{i_1},\ldots,l_{i_s}]:1\leq i_1<\ldots<i_s\leq q\}$ given by the 
${q\choose s}$ Taylor symbols corresponding to subsets of size $s$. That is, $F_s=\bigoplus\limits_{i_1<\ldots<i_s}S[l_{i_1},\ldots,l_{i_s}]$ 
(note that $F_0=S[\varnothing]$). Define
\[f_0:F_0\rightarrow S/M\]
\[s[\varnothing]\mapsto f_0(s[\varnothing])=s\]
For $s=1,\ldots,q$, let $f_s:F_s\rightarrow F_{s-1}$ be given by
\[f_s\left([l_{i_1},\ldots,l_{i_s}]\right)=
 \sum\limits_{j=1}^s\dfrac{(-1)^{j+1}\lcm(l_{i_1},\ldots,l_{i_s})}{\lcm(l_{i_1},\ldots,\widehat{l_{i_j}},\ldots,l_{i_s})}
 [l_{i_1},\ldots,\widehat{l_{i_j}},\ldots,l_{i_s}]\]
 and extended by linearity.
 The \textbf{Taylor resolution} $\mathbb{T}_{l_1,\ldots,l_q}$ of $S/M$ is the exact sequence
 \[\mathbb{T}_{l_1,\ldots,l_q}:0\rightarrow F_q\xrightarrow{f_q}F_{q-1}\rightarrow\cdots\rightarrow F_1\xrightarrow{f_1}F_0\xrightarrow{f_0} 
 S/M\rightarrow0.\]
 \end{construction}
We define the \textbf{multidegree} of a Taylor symbol $[l_{i_1},\ldots,l_{i_s}]$, denoted $\mdeg[l_{i_1},\ldots,l_{i_s}]$, as follows:
  $\mdeg[l_{i_1},\ldots,l_{i_s}]=\lcm(l_{i_1},\ldots,l_{i_s})$. The Taylor symbols $[l_{i_1},\ldots,l_{i_s}]$ are called \textbf{faces}. A Taylor symbol
  of the form $[l_{i_1},\ldots,\widehat{l_{i_j}},\ldots, l_{i_s}]$ is referred to as a \textbf{facet} of the face $[l_{i_1},\ldots,l_{i_s}]$.
  
  \textit{Note}:
  In our construction above, the generating set $\{l_1,\ldots,l_q\}$ is not required to be minimal. Thus, $S/M$ has many Taylor resolutions. We reserve the notation 
  $\mathbb{T}_M$ for the Taylor resolution of $S/M$, determined by the minimal generating set of $M$. (Although some authors define a single Taylor resolution of $S/M$, our construction is general, like in [Ei].)

\begin{construction}
Let $M$ be minimally generated by $\{l_1,\ldots,l_q\}$. Let $A$ be the set of Taylor symbols of $\mathbb{T}_M$ whose 
multidegrees are not common to other Taylor symbols; that is, a Taylor symbol $[\sigma]$ is in $A$ if and only if $\mdeg[\sigma]\neq \mdeg[\sigma']$,
for every Taylor symbol $[\sigma']\neq [\sigma]$. For each $s=0,\ldots,q$, set $G_s$ equal to the free $S$-module with basis 
$\{[l_{i_1},\ldots,l_{i_s}]\in A:1\leq i_1<\ldots<i_s\leq q\}$. For each $s=0,\ldots,q$, let $g_s=f_s\restriction_{G_s}$. It can be proven that the $g_s$
are well defined (more precisely, that $g_s\left(G_s\right)\subseteq G_{s-1}$) and that
\[0\rightarrow G_q\xrightarrow{g_q}G_{q-1}\rightarrow \cdots\rightarrow G_1\xrightarrow{g_1}G_0\xrightarrow{g_0} S/M\rightarrow 0\]
is a subcomplex of $\mathbb{T}_M$, which is called the \textbf{Scarf complex} of $S/M$. 
\end{construction}

\begin{definition}

 Let $M$ be a monomial ideal, and let
 \[\mathbb{F}:\cdots\rightarrow F_i\xrightarrow{f_i}F_{i-1}\rightarrow\cdots\rightarrow F_1\xrightarrow{f_1}F_0\xrightarrow{f_0} S/M\rightarrow 0\]
be a free resolution of $S/M$. 
We say that a basis element $[\sigma]$ of $\mathbb{F}$ has \textbf{homological degree i}, denoted $\hdeg[\sigma]=i$, if 
$[\sigma] \in F_i$. $\mathbb{F}$ is said to be a \textbf{minimal resolution} if for every $i$, the differential matrix $\left(f_i\right)$ of $\mathbb{F}$
has no invertible entries.
\end{definition}
\begin{definition}
Let $M$ be a monomial ideal, and let
 \[\mathbb{F}:\cdots\rightarrow F_i\xrightarrow{f_i}F_{i-1}\rightarrow\cdots\rightarrow F_1\xrightarrow{f_1}F_0\xrightarrow{f_0} S/M\rightarrow 0\]
be a minimal free resolution of $S/M$.
\begin{itemize}
 \item For every $i\geq 0$, the $i^{th}$ \textbf{Betti number} $\betti_i\left(S/M\right)$ of $S/M$ is $\betti_i\left(S/M\right)=\rank(F_i)$.
\item For every $i\geq 0$, and every monomial $l$, the \textbf{multigraded Betti number} $\betti_{i,l}\left(S/M\right)$ of $S/M$, in homological degree $i$ and multidegree $l$,
is \[\betti_{i,l}\left(S/M\right)=\#\{\text{basis elements }[\sigma]\text{ of }F_i:\mdeg[\sigma]=l\}.\]
\item The \textbf{projective dimension} $\pd\left(S/M\right)$ of $S/M$ is \[\pd\left(S/M\right)=\max\{i:\betti_i\left(S/M\right)\neq 0\}.\]
\end{itemize}
\end{definition}

\begin{definition}
Let $M$ be minimally generated by a set of monomials $G$.
\begin{itemize}
\item A monomial $m\in G$ is called \textbf{dominant} (in $G$) if there is a variable $x$, such that for all $m'\in G\setminus\{m\}$, the exponent with 
which $x$ appears in the factorization of $m$ is larger than the exponent with which $x$ appears in the factorization of $m'$.
The set $G$ is called \textbf{dominant} if each of its elements is dominant. The ideal $M$ is called \textbf{dominant} if $G$ is dominant.
\item $G$ is called \textbf{$p$-semidominant} if $G$ contains
exactly $p$ nondominant monomials. The ideal $M$ is \textbf{$p$-semidominant} if $G$ is $p$-semidominant.
\item  We say that $G$ is \textbf{purely nondominant} when all the elements of $G$ are nondominant. In this case, we also say that $M$ is \textbf{purely nondominant}.
\end{itemize}
\end{definition}

 \begin{example}\label{example 1}
  Let $M_1$, $M_2$, and $M_3$ be minimally generated by $G_1=\{a^2,b^3,ab\}$, $G_2=\{ab,bc,ac\}$, and $G_3=\{a^2b,ab^3c,bc^2\}$, respectively. Note that $a^2$ and $b^3$ are dominant in $G_1$, but $ab$ is not. Thus, both the set $G_1$ and the ideal $M_1$ are {1}-semidominant. On the other hand, $ab$, $bc$, and $ac$ are nondominant in $G_2$. Therefore, $G_2$ and $M_2$ are purely nondominant (as well as {3}-semidominant). Finally, $a^2b$, $ab^3c$, and $bc^2$ are dominant in $G_3$. Thus, $G_3$ and $M_3$ are dominant.
\end{example}

 \section{Isomorphism Theorems}%3

The notation that we introduce below retains its meaning until the end of this section.
Let $M$ be a monomial ideal with minimal generating set $G=\{m_1,\ldots,m_q,n_1,\ldots,n_p\}$, where $m_1,\ldots,m_q$ are dominant, and $n_1,\ldots,n_p$ are nondominant. Let $1\leq d \leq q$, and let 
$H=\{h_1,\ldots,h_c\}=\{m_{d+1},\ldots,m_q,n_1,\ldots,n_p\}$. Then $G$ can be expressed in the form $G=\{m_1,\ldots,m_d,h_1,\ldots,h_c\}$.
Let $m=\lcm(m_{r_1},\ldots,m_{r_j})$, where $1\leq r_1 <\ldots< r_j \leq d$. By convention, if $j=0$, $m=1$. For all $s=1,\ldots,c$, let $h'_s=\dfrac{\lcm(m,h_s)}{m}$. Let $M_m=(h'_1,\ldots,h'_c)$.

\begin{example}\label{example 1}%3.1
  Let $M=(a^3b^2,c^3d,ac^2,a^2c,b^2d,abc,bcd)$. Note that $M$ is $5$-semidominant, with $m_1=a^3b^2$, $m_2=c^3d$, $n_1=ac^2$, $n_2=a^2c$, $n_3=b^2d$, $n_4=abc$, $n_5=bcd$. If we set $d=2$, then $H=\{h_1,\ldots,h_5\}=\{n_1,\ldots,n_5\}$. Suppose that $m=\lcm(m_1)=a^3b^2$. Then $M_m=(h'_1,\ldots,h'_5)$, where $h'_1=\dfrac{\lcm(a^3b^2,ac^2)}{a^3b^2}=c^2$; $h'_2=\dfrac{\lcm(a^3b^2,a^2c)}{a^3b^2}=c$; 
$h'_3=\dfrac{\lcm(a^3b^2,b^2d)}{a^3b^2}=d$; $h'_4=\dfrac{\lcm(a^3b^2,abc)}{a^3b^2}=c$; $h'_5=\dfrac{\lcm(a^3b^2,bcd)}{a^3b^2}=cd$.  
Thus, $M_{a^3b^2}=(c^2,c,d,c,cd)$. Although $\{c^2,c,d,c,cd\}$ does not generate $M_{a^3b^2}$ minimally, sometimes, nonminimal generating sets like this will serve our purpose.
\end{example}

\begin{proposition}\label{1}%3.2
Let $1\leq s_1<\ldots<s_i \leq c$. The Taylor symbols $[h'_{s_1},\ldots,h'_{s_i}]$ of $\mathbb{T}_{h'_1,\ldots,h'_c}$, and $[m_{r_1},\ldots,m_{r_j},h_{s_1},\ldots,h_{s_i}]$ of $\mathbb{T}_M$, satisfy
\[\mdeg[m_{r_1},\ldots,m_{r_j},h_{s_1},\ldots,h_{s_i}]=m \mdeg[h'_{s_1},\ldots,h'_{s_i}].\]
\end{proposition}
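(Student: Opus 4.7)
The plan is to verify the claimed identity by comparing exponents variable by variable. First I would observe that the left-hand side can be simplified: by definition of multidegree and the associativity of $\lcm$,
\[\mdeg[m_{r_1},\ldots,m_{r_j},h_{s_1},\ldots,h_{s_i}] = \lcm(m_{r_1},\ldots,m_{r_j},h_{s_1},\ldots,h_{s_i}) = \lcm(m,h_{s_1},\ldots,h_{s_i}),\] since $m=\lcm(m_{r_1},\ldots,m_{r_j})$ (and this holds even when $j=0$, where $m=1$). Meanwhile, the right-hand side unfolds to $m\cdot\lcm\!\bigl(\tfrac{\lcm(m,h_{s_1})}{m},\ldots,\tfrac{\lcm(m,h_{s_i})}{m}\bigr)$.

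Next I would fix a variable $x$ and let $a$ denote its exponent in $m$, and $b_k$ its exponent in $h_{s_k}$ for $k=1,\ldots,i$. On the left-hand side, the exponent of $x$ is $\max(a,b_1,\ldots,b_i)$. On the right-hand side, the exponent of $x$ in $h'_{s_k}=\lcm(m,h_{s_k})/m$ is $\max(a,b_k)-a=\max(0,b_k-a)$, so the exponent of $x$ in $\lcm(h'_{s_1},\ldots,h'_{s_i})$ is $\max_k\max(0,b_k-a)=\max(0,b_1-a,\ldots,b_i-a)$, and multiplying by $m$ adds $a$, yielding $a+\max(0,b_1-a,\ldots,b_i-a)$.

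The proof then reduces to the elementary identity
\[\max(a,b_1,\ldots,b_i)=a+\max(0,b_1-a,\ldots,b_i-a),\]
valid for any nonnegative integers, which is straightforward: if some $b_k\geq a$, both sides equal $\max_k b_k$; otherwise both sides equal $a$. Since this matches on every variable, the two monomials are equal, proving the claim.

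I do not anticipate a genuine obstacle here; the statement is essentially a bookkeeping identity between $\lcm$'s and the quotient construction defining the $h'_s$. The only subtlety is making sure the argument handles the degenerate case $j=0$ (where $m=1$ and the identity trivializes) and the case where some $h_s$ divides $m$ (where $h'_s=1$, contributing nothing to the $\lcm$), but both cases are absorbed cleanly by the variable-wise max identity above.
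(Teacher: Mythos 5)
Your proof is correct and rests on the same underlying computation as the paper's: both reduce the claim to elementary $\lcm$ bookkeeping, the paper by proving the two divisibilities $\lcm(m,h_{s_1},\ldots,h_{s_i})\mid m\lcm(h'_{s_1},\ldots,h'_{s_i})$ and $m\lcm(h'_{s_1},\ldots,h'_{s_i})\mid\lcm(m,h_{s_1},\ldots,h_{s_i})$, and you by checking the single exponent identity $\max(a,b_1,\ldots,b_i)=a+\max(0,b_1-a,\ldots,b_i-a)$ variable by variable. Your variable-wise formulation is a slightly more compact packaging of the same argument, and it handles the degenerate cases ($j=0$, or $h_{s_k}\mid m$) with no extra work, so there is no gap.
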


\begin{proof}
Note that $h_{s_1}\mid \lcm(m,h_{s_1})=mh'_{s_1} \text{, and } mh'_{s_1}\mid m\lcm(h'_{s_1},\ldots,h'_{s_i})$. Thus, $h_{s_1}\mid m \lcm(h'_{s_1},\ldots,h'_{s_i})$.\\
Similarly, 
$h_{s_2},\ldots,h_{s_i}\mid m \lcm(h'_{s_1},\ldots,h'_{s_i})$. Hence, $\lcm(m,h_{s_1},\ldots,h_{s_i})\mid m \lcm(h'_{s_1},\ldots,h'_{s_i})$. We will show that $m\lcm(h'_{s_1},\ldots,h'_{s_i})\mid \lcm(m,h_{s_1},\ldots,h_{s_i})$.\\
 Let $h'_{s_1}=x_1^{\alpha_{11}}\ldots x_n^{\alpha_{1n}},\ldots , h'_{s_i}=x_1^{\alpha_{i1}}\ldots x_n^{\alpha_{in}}$,
 and let $\gamma_1=\max(\alpha_{11},\ldots,\alpha_{i1}), \ldots, \gamma_n = \max(\alpha_{1n}, \ldots, \alpha_{in})$. Then $\lcm(h'_{s_1}, \ldots, h'_{s_i})=x_1^{\gamma_1} \ldots x_n^{\gamma_n}$. Notice that
  $m x_1^{\gamma_1}$ divides one of $m h'_{s_1} = \lcm(m,h_{s_1}),  \ldots,   m h'_{s_i} = \lcm(m,h_{s_i})$, and therefore,
  $m x_1^{\gamma_1} \mid  \lcm(m,h_{s_1},\ldots,h_{s_i})$.
Similarly,
$m x_2^{\gamma_2},\ldots,mx_n^{\gamma_n} \mid  \lcm(m,h_{s_1},\ldots,h_{s_i})$.
Thus,
$x_1^{\gamma_1},\ldots, x_n^{\gamma_n} \mid \dfrac{\lcm(m,h_{s_1}, \ldots, h_{s_i})}{m}$.
It follows that
$\lcm(h'_{s_1}, \ldots, h'_{s_i})=x_1^{\gamma_1} \ldots x_n^{\gamma_n} \mid \dfrac{\lcm(m,h_{s_1}, \ldots, h_{s_i})}{m}$, which is equivalent to saying that
$m\lcm(h'_{s_1},\ldots,h'_{s_i}) \mid \lcm(m,h_{s_1},\ldots,h_{s_i})$.
Finally,
\begin{dmath*}
m\mdeg[h'_{s_1}, \ldots, h'_{s_i}]=m \lcm(h'_{s_1}, \ldots, h'_{s_i})=\lcm(m,h_{s_1},\ldots,h_{s_i})=
\lcm(\lcm(m_{r_1}, \ldots,m_{r_j}),h_{s_1}, \ldots, h_{s_i}) = \lcm(m_{r_1}, \ldots,m_{r_j},h_{s_1}, \ldots, h_{s_i}) = \mdeg[m_{r_1}, \ldots,m_{r_j},h_{s_1}, \ldots, h_{s_i}].
\end{dmath*}
  \end{proof}
  
  \begin{example}\label{example 2}%3.3
 Let $M$, $m$, and $H$ be as in Example \ref{example 1}. The Taylor symbols $[h'_2,h'_3]=[c,d]$ of $\mathbb{T}_{c^2,c,d,c,cd}$, and $[m_1,h_2,h_3]=[a^3b^2,a^2c,b^2d]$ of $\mathbb{T}_M$, have multidegrees $cd$ and $a^3b^2cd$, respectively. Therefore, $\mdeg[a^3b^2,c,d]=a^3b^2 \mdeg[c,d]$, which is consistent with Proposition \ref{1}.    
\end{example}

\textit{Note}:
 We will say that a monomial $l$ \textbf{occurs} in a resolution $\mathbb{F}$ if there is a basis element of $\mathbb{F}$ with multidegree $l$.
If $a$ is an entry of a differential matrix of a resolution $\mathbb{F}$ and $[\sigma]$ is an element of the basis of $\mathbb{F}$, by abusing the 
language we will often say that \textbf{$a$ is an entry of $\mathbb{F}$} and \textbf{$[\sigma]$ is an element of $\mathbb{F}$}. Moreover, sometimes we will use the notation
$[\sigma]\in \mathbb{F}$.

\begin{theorem}\label{3}%3.4
 Let $m'$ be a multidegree that occurs in $\mathbb{T}_{h'_1,\ldots,h'_c}$.
 \begin{enumerate}[(i)]
  \item There are no basis elements of $\mathbb{T}_M$, with multidegree $mm'$ and homological degree less than $j$.
  \item For every $i$, there is a bijective correspondence between the basis elements of $\mathbb{T}_{h'_1,\ldots,h'_c}$, with multidegree $m'$ and homological 
  degree $i$, and the basis elements of $\mathbb{T}_M$, with multidegree $mm'$ and homological degree $i+j$.
 \end{enumerate}
 \end{theorem}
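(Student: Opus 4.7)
My plan is to exploit the dominance of $m_{r_1}, \ldots, m_{r_j}$ to show that any Taylor symbol of $\mathbb{T}_M$ with multidegree $mm'$ must contain all of $m_{r_1}, \ldots, m_{r_j}$, after which the bijection asserted in (ii) can be read off directly from Proposition~\ref{1}.

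The key numerical observation is the following. For each $k = 1, \ldots, j$, since $m_{r_k}$ is dominant in $G$, we may fix a variable $y_k$ in which $m_{r_k}$ has an exponent $a_k$ that strictly exceeds the $y_k$-exponent of every other generator of $M$. Then the $y_k$-exponent of $m = \lcm(m_{r_1}, \ldots, m_{r_j})$ equals $a_k$, and for every $s$ the monomial $h_s$ has $y_k$-exponent strictly less than $a_k$, so $h'_s = \lcm(m, h_s)/m$ has $y_k$-exponent $0$. Consequently $m'$ has $y_k$-exponent $0$ and $mm'$ has $y_k$-exponent exactly $a_k$.

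Part (i) follows at once: if $[\sigma] = [l_{u_1}, \ldots, l_{u_s}]$ is any Taylor symbol of $\mathbb{T}_M$ with $\mdeg[\sigma] = mm'$, then for each $k$ the lcm $\lcm(l_{u_1}, \ldots, l_{u_s})$ must attain $y_k$-exponent $a_k$, and only $m_{r_k}$ does so among the generators of $M$. Hence $\{m_{r_1}, \ldots, m_{r_j}\} \subseteq \sigma$, giving $s \geq j$.

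For part (ii), I would set up the natural map
\[ \varphi\colon [h'_{s_1}, \ldots, h'_{s_i}] \longmapsto [m_{r_1}, \ldots, m_{r_j}, h_{s_1}, \ldots, h_{s_i}], \]
which by Proposition~\ref{1} sends multidegree $m'$ to multidegree $mm'$ and raises homological degree by $j$. Injectivity is clear because the index set $\{s_1, \ldots, s_i\}$ can be recovered from the image by stripping off the $m_{r_k}$'s. The main obstacle is \emph{surjectivity}. Given $[\sigma]$ with $\mdeg[\sigma] = mm'$ and $\hdeg[\sigma] = i+j$, part (i) already places all of $m_{r_1}, \ldots, m_{r_j}$ inside $\sigma$, so it remains to rule out the presence of any other dominant generator $m_{r'}$ with $r' \in \{1, \ldots, d\} \setminus \{r_1, \ldots, r_j\}$. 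To do this I would pick the variable $y'$ witnessing the dominance of $m_{r'}$ in $G$, with exponent $a'$: on the one hand, if $m_{r'} \in \sigma$ then $\mdeg[\sigma]$ has $y'$-exponent $\geq a'$; on the other hand, choosing any representation $m' = \mdeg[h'_{t_1}, \ldots, h'_{t_r}]$ and applying Proposition~\ref{1} writes $mm' = \lcm(m_{r_1}, \ldots, m_{r_j}, h_{t_1}, \ldots, h_{t_r})$, none of whose factors is $m_{r'}$, so $mm'$ has $y'$-exponent strictly less than $a'$, a contradiction. Hence $\sigma \setminus \{m_{r_1}, \ldots, m_{r_j}\} = \{h_{s_1}, \ldots, h_{s_i}\}$ for some indices $s_1 < \cdots < s_i$, and a final application of Proposition~\ref{1} gives $\mdeg[h'_{s_1}, \ldots, h'_{s_i}] = m'$, confirming that $[\sigma] = \varphi([h'_{s_1}, \ldots, h'_{s_i}])$.
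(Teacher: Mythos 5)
Your proof is correct and follows essentially the same route as the paper: the same natural bijection $[h'_{s_1},\ldots,h'_{s_i}]\mapsto[m_{r_1},\ldots,m_{r_j},h_{s_1},\ldots,h_{s_i}]$, with well-definedness and the multidegree bookkeeping handled by Proposition \ref{1}. The only difference is that where the paper invokes [Al, Lemma 4.3] to force every basis element of multidegree $mm'$ to contain exactly the dominant monomials $m_{r_1},\ldots,m_{r_j}$ (and no other $m_{r'}$ with $r'\leq d$), you prove the needed instances of that fact directly via the witnessing variables $y_k$, $y'$ and their exponents, which makes the argument self-contained but does not change its substance.
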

 
\begin{proof}
 (i) Let $[h'_{s_1},\ldots,h'_{s_i}]$ be a basis element of $\mathbb{T}_{h'_1,\ldots,h'_c}$, with multidegree $m'$. By Proposition \ref{1},
 \[mm'=m\mdeg[h'_{s_1},\ldots,h'_{s_i}]=\mdeg[m_{r_1},\ldots,m_{r_j},h_{s_1},\ldots,h_{s_i}].\]
 It follows that every basis element $[\sigma]$ of $\mathbb{T}_M$, with multidegree $mm'$, must contain the same dominant monomials $m_{r_1},\ldots,m_{r_j}$ [Al, Lemma 4.3]. This means that
 $\hdeg[\sigma]\geq j$.\\
 (ii) Let $A_{i,m'}=\left\{[\sigma]\in \mathbb{T}_{h'_1,\ldots,h'_c}: \hdeg[\sigma]=i; \mdeg[\sigma]=m'\right\}$.\\
 Let $B_{i,m'}=\left\{[\sigma]\in \mathbb{T}_M: \hdeg[\sigma]=i+j; \mdeg[\sigma]=mm'\right\}$.\\ 
 Let $f_{i,m'}:A_{i,m'}\rightarrow B_{i,m'}$ be defined by $f_{i,m'}[h'_{s_1},\ldots,h'_{s_i}]=[m_{r_1},\ldots,m_{r_j},h_{s_1},\ldots,h_{s_i}]$.\\
 Notice that $f_{i,m'}$ is well defined:\\ if $[\sigma]\in A_{i,m'}$, then $\hdeg f_{i,m'}[\sigma]=i+j$ and by Proposition \ref{1}, $mm'=m\mdeg[\sigma]=\mdeg f_{i,m'}[\sigma]$.\\
 Besides that, $f_{i,m'}$ is one to one:\\
 $f_{i,m'}[h'_{s_1},\ldots,h'_{s_i}]=f_{i,m'}[h'_{t_1},\ldots,h'_{t_i}]\\
 \Rightarrow  [m_{r_1},\ldots,m_{r_j},h_{s_1},\ldots,h_{s_i}]=[m_{r_1},\ldots,m_{r_j},h_{t_1},\ldots,h_{t_i}]\\
 \Rightarrow h_{s_1}=h_{t_1},\cdots,h_{s_i}=h_{t_i}\\
 \Rightarrow [ h'_{s_1},\ldots,h'_{s_i}]=[h'_{t_1},\ldots,h'_{t_i}].$\\
 Finally, $f_{i,m'}$ is onto:\\
 Suppose that $[\tau]$ is in $B_{i,m'}$. Let $[h'_{t_1},\ldots,h'_{t_k}]$ be an element in $\mathbb{T}_{h'_1,\ldots,h'_c}$, with multidegree $m'$. By Proposition \ref{1}, 
 \[mm'=m\mdeg[h'_{t_1},\ldots,h'_{t_k}]=\mdeg[m_{r_1},\ldots,m_{r_j},h_{t_1},\ldots,h_{t_k}].\] 
 Since $[\tau]$ and $[m_{r_1},\ldots,m_{r_j},h_{t_1},\ldots,h_{t_k}]$ are basis elements of equal multidegree, they must contain the same dominant monomials [Al, Lemma 4.3] and, given that $[\tau]$ has homological degree $i+j$, $[\tau]$ must be of the form $[\tau]=[m_{r_1},\ldots,m_{r_j},h_{s_1},\ldots,h_{s_i}]$. By Proposition \ref{1}, $[h'_{s_1},\ldots,h'_{s_i}] \in A_{i,m'}$, and $f_{i,m'} [h'_{s_1},\ldots,h'_{s_i}]=[\tau]$.
\end{proof}

\begin{example}\label{example 3}%3.5
Let $M$, $m$, and $H$ be as in Example \ref{example 1}. Let $m'=cd$. Since the only basis element of $\mathbb{T}_M$ in homological degree $0$ is 
$[\varnothing]$, and $\mdeg[\varnothing]=1$, there are no basis elements of $\mathbb{T}_M$ in homological degree $0$ and multidegree $mm'=a^3b^2cd$. This illustrates Theorem \ref{3}(i). On the other hand, the basis elements of 
$\mathbb{T}_{c^2,c,d,c,cd}$ with multidegree $m'=cd$ are \\
$[h'_5]$, in homological degree $1$; \\
$[h'_2,h'_5]$, $[h'_3,h'_5]$, $[h'_4,h'_5]$, $[h'_2,h'_3]$, $[h'_3,h'_4]$ in homological degree $2$;\\
 $[h'_2,h'_3,h'_5]$, $[h'_2,h'_4,h'_5]$, $[h'_3,h'_4,h'_5]$, $[h'_2,h'_3,h'_4]$ in homological degree $3$; and \\
  $[h'_2,h'_3,h'_4,h'_5]$ in homological degree $4$.\\
   Similarly, the basis elements of $\mathbb{T}_M$ with multidegree $mm'=a^3b^2cd$ are \\
   $[m_1,h_5]$ in homological degree $2$;\\
    $[m_1,h_2,h_5]$, $[m_1,h_3,h_5]$, $[m_1,h_4,h_5]$, $[m_1,h_2,h_3]$, $[m_1,h_3,h_4]$, in homological degree $3$; \\
    $[m_1,h_2,h_3,h_5]$, $[m_1,h_2,h_4,h_5]$, $[m_1,h_3,h_4,h_5]$, $[m_1,h_2,h_3,h_4]$ in homological degree $4$; and \\
    $[m_1,h_2,h_3,h_4,h_5]$ in homological degree $5$, which illustrates the bijective correspondence of Theorem \ref{3}(ii).  
\end{example}

Notation: for every multidegree $m'$ that occurs in $\mathbb{T}_{h'_1,\ldots,h'_c}$ and every $i=0,\ldots,c$, let $f_{i,m'}:A_{i,m'}\rightarrow B_{i,m'}$ be
the bijection constructed in Theorem \ref{3}. 
Let $A_i=\bigcup\limits_{m'}A_{i,m'}$ and $B_i=\bigcup\limits_{m'}B_{i,m'}$. Let us define $f_i:A_i\rightarrow B_i$ by
$f_i[\sigma]=f_{i,m'}[\sigma]$, if $\mdeg[\sigma]=m'$.
Let $A=\bigcup\limits_i A_i$; $B=\bigcup\limits_i B_i$. Let us define $f:A\rightarrow B$ by $f[\sigma]=f_i[\sigma]$, if $\hdeg[\sigma]=i$.
Note that $A$ is the basis of $\mathbb{T}_{h'_1,\ldots,h'_c}$, and $f$ is a bijection that sends an element with multidegree $m'$ and homological degree $i$
to an element with multidegree $mm'$ and homological degree $i+j$.

To better understand the statement of the next theorem, we refer the reader to [Al, Remark 3.4].

\begin{theorem}\label{4}
 If $a_{\pi\theta}$ is an entry of $\mathbb{T}_{h'_1,\ldots,h'_c}$, determined by elements $[\theta],[\pi]\in A$, then $f[\theta],f[\pi]$ determine an entry
 $b_{\pi\theta}$ of $\mathbb{T}_M$ such that $b_{\pi\theta}=(-1)^ja_{\pi\theta}$.
\end{theorem}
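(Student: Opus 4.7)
The plan is to reduce the theorem to a direct computation using the formula for the Taylor differential in Construction~2.1, together with Proposition~\ref{1} to translate lcm's across $f$. The key bookkeeping issue will be the sign.

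First I would unpack what it means for $[\theta],[\pi]\in A$ to determine a (nonzero) entry $a_{\pi\theta}$ of $\mathbb{T}_{h'_1,\ldots,h'_c}$: this forces $[\pi]$ to be a facet of $[\theta]$, so I can write $[\theta]=[h'_{s_1},\ldots,h'_{s_i}]$ and $[\pi]=[h'_{s_1},\ldots,\widehat{h'_{s_k}},\ldots,h'_{s_i}]$ for some $k$, giving
\[
a_{\pi\theta}=\frac{(-1)^{k+1}\lcm(h'_{s_1},\ldots,h'_{s_i})}{\lcm(h'_{s_1},\ldots,\widehat{h'_{s_k}},\ldots,h'_{s_i})}.
\]
Next I apply $f$: by construction, $f[\theta]=[m_{r_1},\ldots,m_{r_j},h_{s_1},\ldots,h_{s_i}]$ and $f[\pi]=[m_{r_1},\ldots,m_{r_j},h_{s_1},\ldots,\widehat{h_{s_k}},\ldots,h_{s_i}]$, so $f[\pi]$ is a facet of $f[\theta]$ (confirming that $f[\theta]$ and $f[\pi]$ do determine an entry $b_{\pi\theta}$ of $\mathbb{T}_M$).

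The main point to track carefully is the sign. Because each $m_{r_\ell}$ has index $r_\ell\leq d$ in $G$ while each $h_{s_\ell}$ has index $>d$, the list $m_{r_1},\ldots,m_{r_j},h_{s_1},\ldots,h_{s_i}$ is written in increasing order of $G$-indices, i.e.\ in the canonical form required by Construction~2.1. Hence the position of $h_{s_k}$ inside $f[\theta]$ is not $k$ but $j+k$, so the sign accompanying the facet $f[\pi]$ is $(-1)^{(j+k)+1}=(-1)^{j}\cdot(-1)^{k+1}$. This is exactly where the factor $(-1)^j$ comes from. Therefore
\[
b_{\pi\theta}=\frac{(-1)^{j+k+1}\,\lcm(m_{r_1},\ldots,m_{r_j},h_{s_1},\ldots,h_{s_i})}{\lcm(m_{r_1},\ldots,m_{r_j},h_{s_1},\ldots,\widehat{h_{s_k}},\ldots,h_{s_i})}.
\]

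Finally I invoke Proposition~\ref{1} twice: the numerator equals $m\,\lcm(h'_{s_1},\ldots,h'_{s_i})$ and the denominator equals $m\,\lcm(h'_{s_1},\ldots,\widehat{h'_{s_k}},\ldots,h'_{s_i})$. The factors of $m$ cancel, leaving $b_{\pi\theta}=(-1)^{j}a_{\pi\theta}$, as desired. For completeness, the degenerate case $a_{\pi\theta}=0$ (when $[\pi]$ is not a facet of $[\theta]$) is handled by the same observation: any facet of $f[\theta]$ inside $B$ must retain the prefix $m_{r_1},\ldots,m_{r_j}$, so if $[\pi]$ is not a facet of $[\theta]$ then $f[\pi]$ is not a facet of $f[\theta]$, and $b_{\pi\theta}=0=(-1)^ja_{\pi\theta}$. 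The only step that is even a little delicate is the sign count, and the ordering argument above handles it cleanly.
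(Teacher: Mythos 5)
Your proposal is correct and follows essentially the same route as the paper: reduce to the two cases ($[\pi]$ a facet of $[\theta]$ or not), compute the facet-case entry from the Taylor differential with the position of $h_{s_k}$ shifted by $j$ to produce the sign $(-1)^{j+t+1}$, and use Proposition~\ref{1} to cancel the factor $m$ in the ratio of lcm's. Your explicit remark that the prefix $m_{r_1},\ldots,m_{r_j}$ keeps the symbol in canonical index order is a nice spelling-out of the sign bookkeeping the paper states directly.
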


\begin{proof}
Since $[\theta],[\pi]$ appear in consecutive homological degrees, so do $f[\theta],f[\pi]$. Thus, $f[\theta],f[\pi]$ determine an entry $b_{\pi\theta}$ of
$\mathbb{T}_M$.

If $[\pi]$ is a facet of $[\theta]$, then $f[\pi]$ is also a facet of $f[\theta]$ and, these elements are of the form:
\[[\theta]=[h'_{s_1},\ldots,h'_{s_i}];\quad [\pi]=[h'_{s_1},\ldots,\widehat{h'_{s_t}},\ldots,h'_{s_i}];\]
\[f[\theta]=[m_{r_1},\ldots,m_{r_j},h_{s_1},\ldots,h_{s_i}];\quad f[\pi]=[m_{r_1},\ldots,m_{r_j},h_{s_1},\ldots,\widehat{h_{s_t}},\ldots,h_{s_i}]\]
Thus
$b_{\pi\theta}=(-1)^{j+t+1}\dfrac{\mdeg f[\theta]}{\mdeg f[\pi]}=(-1)^j(-1)^{t+1}\dfrac{m\mdeg[\theta]}{m\mdeg[\pi]}=(-1)^j a_{\pi\theta}$.

On the other hand, if $[\pi]$ is not a facet of $[\theta]$, $f[\pi]$ cannot be a facet of $f[\theta]$, either. Thus $a_{\pi\theta}=0=b_{\pi\theta}$.
\end{proof}

Notation: let $\mathbb{F}_0=\mathbb{T}_{h'_1,\ldots,h'_c}$. If there is an invertible entry $a^{(0)}_{\pi_0\theta_0}$ of $\mathbb{F}_0$, determined by elements
$[\theta_0],[\pi_0]\in \mathbb{F}_0$, let $\mathbb{F}_1$ be the resolution of $S/M_m$ such that
\[\mathbb{F}_0=\mathbb{F}_1\oplus \left(0\rightarrow S[\theta_0]\rightarrow S[\pi_0]\rightarrow 0\right).\]
Let us assume that $\mathbb{F}_{k-1}$ has been defined. If there is an invertible entry $a^{(k-1)}_{\pi_{k-1}\theta_{k-1}}$ of $\mathbb{F}_{k-1}$, 
determined by elements $[\theta_{k-1}],[\pi_{k-1}]$ of $\mathbb{F}_{k-1}$, let $\mathbb{F}_k$ be the resolution of $S/M_m$ such that
\[\mathbb{F}_{k-1}=\mathbb{F}_k \oplus \left(0\rightarrow S[\theta_{k-1}]\rightarrow S[\pi_{k-1}]\rightarrow 0\right).\]

\begin{theorem}\label{5}
 Suppose that $\mathbb{F}_0,\ldots,\mathbb{F}_u$ are resolutions of $S/M_m$, defined as above. Then
 \begin{enumerate}[(i)]
  \item It is possible to define resolutions $\mathbb{G}_0,\ldots,\mathbb{G}_u$ of $S/M$, as follows:
  \[\mathbb{G}_0=\mathbb{T}_M;\quad \mathbb{G}_{k-1}=\mathbb{G}_k\oplus \left(0\rightarrow Sf[\theta_{k-1}]\rightarrow Sf[\pi_{k-1}]\rightarrow 0\right).\]
  \item If $a^{(u)}_{\tau\sigma}$ is an entry of $\mathbb{F}_u$, determined by elements $[\sigma],[\tau]$ of $\mathbb{F}_u$, then 
  $f[\sigma],f[\tau]$ are in the basis of $\mathbb{G}_u$ and determine an entry $b^{(u)}_{\tau\sigma}$ of $\mathbb{G}_u$, such that 
  $b^{(u)}_{\tau\sigma}=(-1)^ja^{(u)}_{\tau\sigma}$.
 \end{enumerate}
\end{theorem}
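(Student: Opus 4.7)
The natural approach is induction on $u$, with parts (i) and (ii) proved simultaneously.

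For the base case $u=0$, there is nothing to do for (i) since $\mathbb{G}_0 = \mathbb{T}_M$ is given, and (ii) reduces to Theorem \ref{4}: any entry $a^{(0)}_{\tau\sigma}$ of $\mathbb{F}_0 = \mathbb{T}_{h'_1,\ldots,h'_c}$ is either zero (in which case the corresponding entry of $\mathbb{T}_M$ is zero by Theorem \ref{4}) or determined by a face-facet pair, in which case Theorem \ref{4} gives $b^{(0)}_{\tau\sigma} = (-1)^j a^{(0)}_{\tau\sigma}$.

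For the inductive step, assume $\mathbb{G}_0, \ldots, \mathbb{G}_{u-1}$ have been constructed and that the $(-1)^j$-relation holds through step $u-1$. Since the splittings from $\mathbb{F}_0$ to $\mathbb{F}_{u-1}$ only remove basis elements, the elements $[\theta_{u-1}],[\pi_{u-1}]$ lie in $\mathbb{F}_{u-1}$ and have never been removed; by the same token, $f[\theta_{u-1}], f[\pi_{u-1}]$ lie in $\mathbb{G}_{u-1}$ (they correspond, under $f$, to basis elements which have not yet been paired off). By the inductive hypothesis applied to the entry $a^{(u-1)}_{\pi_{u-1}\theta_{u-1}}$, the entry of $\mathbb{G}_{u-1}$ determined by $f[\theta_{u-1}], f[\pi_{u-1}]$ equals $(-1)^j a^{(u-1)}_{\pi_{u-1}\theta_{u-1}}$, which is invertible. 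This allows us to split a trivial rank-one complex off of $\mathbb{G}_{u-1}$, defining $\mathbb{G}_u$ as in (i); the resulting complex is a free resolution of $S/M$ since splitting off a contractible summand preserves the resolution property.

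To establish (ii) at level $u$, recall the standard formula for how differential entries are updated when an invertible entry is used to split off a trivial summand: for any remaining basis elements $[\sigma],[\tau]$,
\[a^{(u)}_{\tau\sigma} = a^{(u-1)}_{\tau\sigma} - a^{(u-1)}_{\tau\theta_{u-1}}\bigl(a^{(u-1)}_{\pi_{u-1}\theta_{u-1}}\bigr)^{-1} a^{(u-1)}_{\pi_{u-1}\sigma},\]
and similarly on the $\mathbb{G}$-side. Substituting the inductive identity $b^{(u-1)}_{\bullet\bullet} = (-1)^j a^{(u-1)}_{\bullet\bullet}$ into the update formula for $\mathbb{G}_u$, the three factors of $(-1)^j$ in the correction term collapse to a single $(-1)^j$ (since $(-1)^{3j} = (-1)^j$), giving $b^{(u)}_{\tau\sigma} = (-1)^j a^{(u)}_{\tau\sigma}$, which completes the induction.

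The main obstacle I anticipate is bookkeeping rather than genuine difficulty: verifying that the bijection $f$ restricts compatibly to the basis elements surviving at each stage, and checking that the update formulas on the two sides match up under $f$ with the scalar $(-1)^j$ preserved through the algebraic cancellation. Everything else is a direct application of Theorem \ref{4} and the standard theory of splitting off trivial summands from a free complex.
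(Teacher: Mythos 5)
Your proposal is correct and follows essentially the same route as the paper: induction on $u$ with the base case given by Theorem \ref{4}, the inductive construction of $\mathbb{G}_u$ from the invertibility of $b^{(u-1)}_{\pi_{u-1}\theta_{u-1}}=(-1)^j a^{(u-1)}_{\pi_{u-1}\theta_{u-1}}$, and the standard elimination update formula in which the $(-1)^j$ factors cancel to one. The only difference is presentational: the paper separates the cases $\hdeg[\sigma]\neq\hdeg[\theta_{u-1}]$ (entry unchanged) and $\hdeg[\sigma]=\hdeg[\theta_{u-1}]$ (correction term), and spells out via injectivity of $f$ that the surviving pairs are disjoint, which you correctly flag as bookkeeping.
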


\begin{proof}
 The proof is by induction on $u$. If $u=0$, (i) and (ii) are the content of Theorem \ref{4}.\\
 Let us assume that parts (i) and (ii) hold for $u-1$.
 We will prove parts (i) and (ii) for $u$.\\
 (i) We need to show that $\mathbb{G}_u$ can be defined by the rule
 \[\mathbb{G}_{u-1}=\mathbb{G}_u\oplus\left(0\rightarrow Sf[\theta_{u-1}]\rightarrow Sf[\pi_{u-1}]\rightarrow 0\right).\]
 In other words, we must show that $f[\theta_{u-1}]$, $f[\pi_{u-1}]$ are in the basis of $\mathbb{G}_{u-1}$, and the entry $b^{(u-1)}_{\pi_{u-1}\theta_{u-1}}$ of
 $\mathbb{G}_{u-1}$, determined by them, is invertible. But this follows from induction hypothesis and the fact that $a^{(u-1)}_{\pi_{u-1}\theta_{u-1}}$ is
 invertible.\\
 (ii) Notice that the basis of $\mathbb{F}_u$ is obtained from the basis of $\mathbb{F}_{u-1}$, by eliminating $[\theta_{u-1}],[\pi_{u-1}]$. This means
 that $[\sigma],[\tau]$ are in the basis of $\mathbb{F}_{u-1}$, and the pairs $\left([\sigma],[\tau]\right)$, $\left([\theta_{u-1}],[\pi_{u-1}]\right)$ are
 disjoint. Then by induction hypothesis, $f[\sigma]$, $f[\tau]$ are in the basis of $\mathbb{G}_{u-1}$, and because $f$ is a bijection, 
 $\left(f[\sigma],f[\tau]\right)$, $\left(f[\theta_{u-1}],f[\pi_{u-1}]\right)$ are disjoint pairs. Since the basis of $\mathbb{G}_u$ is obtained from the
 basis of $\mathbb{G}_{u-1}$, by eliminating $f[\theta_{u-1}],f[\pi_{u-1}]$, we must have that $f[\sigma],f[\tau]$ are in the basis of $\mathbb{G}_u$.
 Finally, we need to prove that $b^{(u)}_{\tau\sigma}=(-1)^ja^{(u)}_{\tau\sigma}$. By [Al, Lemma 3.2(iv)], if 
 $\hdeg[\sigma]\neq\hdeg[\theta_{u-1}]$, then $a^{(u)}_{\tau\sigma}=a^{(u-1)}_{\tau\sigma}$. In this case, we must also have that 
 $\hdeg f[\sigma]\neq \hdeg f[\theta_{u-1}]$, which implies that $b^{(u)}_{\tau\sigma}=b^{(u-1)}_{\tau\sigma}$, by the same lemma. Then, by induction hypothesis, 
 $b^{(u)}_{\tau\sigma}=b^{(u-1)}_{\tau\sigma}=(-1)^ja^{(u-1)}_{\tau\sigma}=(-1)^ja^{(u)}_{\tau\sigma}$. On the other hand, if 
 $\hdeg[\sigma]=\hdeg[\theta_{u-1}]$, then $\hdeg f[\sigma]=\hdeg f[\theta_{u-1}]$. Combining the induction hypothesis with [Al, Lemma 3.2(iii)], we 
 obtain
 \[b^{(u)}_{\tau\sigma}=b^{(u-1)}_{\tau\sigma}-\dfrac{b^{(u-1)}_{\tau\theta_{u-1}}b^{(u-1)}_{\pi_{u-1}\sigma}}{b^{(u-1)}_{\pi_{u-1}\theta_{u-1}}}=
 (-1)^j\left(a^{(u-1)}_{\tau\sigma}-\dfrac{a^{(u-1)}_{\tau\theta_{u-1}}a^{(u-1)}_{\pi_{u-1}\sigma}}{a^{(u-1)}_{\pi_{u-1}\theta_{u-1}}}\right)=(-1)^j
 a^{(u)}_{\tau\sigma}\]
\end{proof}

Since the process of making standard cancellations must eventually terminate, there is an integer $u\geq0$, such that $\mathbb{F}_0,\ldots,\mathbb{F}_u$ are
defined as above and $\mathbb{F}_u$ is a minimal resolution of $S/M_m$. For the rest of this section $u$ is such an integer and 
$\mathbb{F}_u$ is such a minimal resolution. Moreover, the resolutions $\mathbb{F}_0,\ldots,\mathbb{F}_u$ and $\mathbb{G}_0,\ldots,\mathbb{G}_u$ are also 
fixed for the rest of this section.

Notation: Let $A'=A\setminus\{[\theta_0],[\pi_0],\ldots,[\theta_{u-1}],[\pi_{u-1}]\}$ and 
$B'=B\setminus\{f[\theta_0],f[\pi_0],\ldots,f[\theta_{u-1}],$\\
$f[\pi_{u-1}]\}$. Notice that $A'$ is the basis of the minimal resolution $\mathbb{F}_u$.

\begin{theorem}\label{6}%3.8
 If $b^{(u)}_{\pi\theta}$ is an entry of $\mathbb{G}_u$, determined by elements $f[\theta],f[\pi]\in B'$, then $b^{(u)}_{\pi\theta}$ is noninvertible.
\end{theorem}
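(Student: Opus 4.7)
The proof should be essentially a direct application of Theorem \ref{5}(ii) combined with the minimality of $\mathbb{F}_u$. The plan is to transport the noninvertibility of entries of $\mathbb{F}_u$ across the bijection $f$ to noninvertibility of the corresponding entries of $\mathbb{G}_u$.

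First I would observe that $f$ restricts to a bijection between $A'$ and $B'$. Indeed, $f\colon A\to B$ is a bijection by construction, and $B'$ is defined precisely as the complement in $B$ of the images $f[\theta_0], f[\pi_0], \ldots, f[\theta_{u-1}], f[\pi_{u-1}]$ of the elements removed from $A$ to form $A'$. Hence if $f[\theta], f[\pi]\in B'$, the preimages $[\theta],[\pi]$ lie in $A'$, which is the basis of the minimal resolution $\mathbb{F}_u$.

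Next I would argue that $[\theta]$ and $[\pi]$ occur in consecutive homological degrees in $\mathbb{F}_u$: the entry $b^{(u)}_{\pi\theta}$ is defined only when $f[\theta]$ and $f[\pi]$ sit in consecutive homological degrees of $\mathbb{G}_u$, and since $f$ shifts homological degree uniformly by $+j$, the same is true of $[\theta]$ and $[\pi]$ in $\mathbb{F}_u$. Therefore $[\theta]$ and $[\pi]$ determine a well-defined entry $a^{(u)}_{\pi\theta}$ of $\mathbb{F}_u$. By Theorem \ref{5}(ii) we then have $b^{(u)}_{\pi\theta}=(-1)^j a^{(u)}_{\pi\theta}$.

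Finally, since $\mathbb{F}_u$ is a minimal resolution of $S/M_m$, every entry of its differential matrices is noninvertible; in particular $a^{(u)}_{\pi\theta}$ is noninvertible. Multiplication by the unit $(-1)^j$ preserves (non)invertibility, so $b^{(u)}_{\pi\theta}$ is also noninvertible, which is the desired conclusion. There is no real obstacle here: the whole content of the statement is that the inductively constructed isomorphism of Theorem \ref{5} transfers minimality from $\mathbb{F}_u$ to the portion of $\mathbb{G}_u$ indexed by $B'$, and this follows immediately once one checks that the restriction of $f$ to $A'\to B'$ is bijective.
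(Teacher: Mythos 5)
Your proposal is correct and follows the same route as the paper: pull $f[\theta],f[\pi]\in B'$ back to $[\theta],[\pi]\in A'$, note that the corresponding entry $a^{(u)}_{\pi\theta}$ of the minimal resolution $\mathbb{F}_u$ is noninvertible, and transfer this via $b^{(u)}_{\pi\theta}=(-1)^j a^{(u)}_{\pi\theta}$ from Theorem \ref{5}(ii). Your version merely spells out the consecutive-degree check and the bijectivity of $f$ on $A'$, which the paper leaves implicit.
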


\begin{proof}
 Since $f[\theta],f[\pi]\in B'$, $[\theta],[\pi]\in A'$ and thus, the entry $a^{(u)}_{\pi\theta}$ of $\mathbb{F}_u$ is noninvertible. Now, by Theorem
 \ref{5}(ii), $b^{(u)}_{\pi\theta}$ is noninvertible.
\end{proof}

\begin{theorem}\label{7}%3.9
 Let $m'$ be a multidegree that occurs in $\mathbb{T}_{M_m}$.
 \begin{enumerate}[(i)]
  \item There are no basis elements of $\mathbb{G}_u$, with multidegree $mm'$ and homological degree less than $j$.
  \item For every $i=0,\ldots,c$, there is a bijective correspondence between the basis elements of $\mathbb{F}_u$, with multidegree $m'$ and homological
  degree $i$, and the basis elements of $\mathbb{G}_u$, with multidegree $mm'$ and homological degree $i+j$.
 \end{enumerate}
\end{theorem}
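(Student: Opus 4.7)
The plan is to leverage Theorem \ref{3}, which already provides a bijection between the basis elements of the Taylor resolutions $\mathbb{T}_{h'_1,\ldots,h'_c}$ and $\mathbb{T}_M$ at the relevant multidegrees and homological degrees, and then restrict this bijection to the minimal resolutions $\mathbb{F}_u$ and $\mathbb{G}_u$. A preliminary observation is that every multidegree $m'$ occurring in $\mathbb{T}_{M_m}$ also occurs in $\mathbb{T}_{h'_1,\ldots,h'_c}$: the minimal generators of $M_m$ form a subset (up to repetitions) of $\{h'_1,\ldots,h'_c\}$, so every lcm of minimal generators is also the lcm of some subset of $\{h'_1,\ldots,h'_c\}$. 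This makes Theorem \ref{3} applicable throughout.

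For part (i), the basis of $\mathbb{G}_u$ is obtained from that of $\mathbb{G}_0=\mathbb{T}_M$ by iteratively deleting pairs of elements, so it is a subset of the basis of $\mathbb{T}_M$. Theorem \ref{3}(i) says that no basis element of $\mathbb{T}_M$ has multidegree $mm'$ and homological degree less than $j$, so the same holds a fortiori for $\mathbb{G}_u$.

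For part (ii), the candidate bijection is the restriction of $f$ to $A' \cap A_{i,m'}$. I would verify three points. First, if $[\sigma] \in A' \cap A_{i,m'}$, then $f[\sigma]$ is a basis element of $\mathbb{G}_u$ of multidegree $mm'$ and homological degree $i+j$: Theorem \ref{3}(ii) gives $f[\sigma] \in B_{i,m'}$, and since $[\sigma] \notin \{[\theta_k],[\pi_k] : 0 \leq k < u\}$, we have $f[\sigma] \notin \{f[\theta_k],f[\pi_k] : 0 \leq k < u\}$, so $f[\sigma]$ survives every cancellation and lies in the basis of $\mathbb{G}_u$. Second, injectivity is inherited from $f$. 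Third, for surjectivity, if $[\tau]$ is a basis element of $\mathbb{G}_u$ with multidegree $mm'$ and homological degree $i+j$, then $[\tau]$ is in particular a basis element of $\mathbb{T}_M$ with these invariants, hence lies in $B_{i,m'}$ by Theorem \ref{3}(ii); writing $[\tau]=f[\sigma]$ with $[\sigma] \in A_{i,m'}$, if $[\sigma]$ had been cancelled then $[\tau]=f[\sigma]$ would have been cancelled too, contradicting $[\tau]$ being in the basis of $\mathbb{G}_u$, so $[\sigma] \in A'$. The main difficulty is pure bookkeeping rather than any new mathematical content; the heart of the argument is that the cancellation construction for $\mathbb{G}_u$ from $\mathbb{T}_M$ removes exactly the $f$-images of the elements removed in passing from $\mathbb{T}_{h'_1,\ldots,h'_c}$ to $\mathbb{F}_u$, so $f$ restricts cleanly to the desired bijection.
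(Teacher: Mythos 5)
Your proof is correct and follows essentially the same route as the paper: part (i) is the identical containment argument via Theorem \ref{3}(i), and part (ii) restricts the bijection $f$ of Theorem \ref{3}(ii) to the elements surviving cancellation, using the fact that the pairs removed in constructing $\mathbb{G}_u$ are precisely the $f$-images of the pairs removed in constructing $\mathbb{F}_u$. Your preliminary observation that every multidegree occurring in $\mathbb{T}_{M_m}$ also occurs in $\mathbb{T}_{h'_1,\ldots,h'_c}$ is a small point the paper leaves implicit, and it is correctly justified.
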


\begin{proof}
 (i) Since the basis of $\mathbb{G}_u$ is contained in that of $\mathbb{T}_M$, the statement follows from Theorem \ref{3}(i).\\
 (ii) The set of basis elements of $\mathbb{F}_u$, with multidegree $m'$ and homological degree $i$ is 
 $A'_{i,m'}=A_{i,m'}\setminus\{[\theta_0],[\pi_0],\ldots,[\theta_{u-1}],[\pi_{u-1}]\}$. Similarly, the set of basis elements of $\mathbb{G}_u$, with 
 multidegree $mm'$ and homological degree $i+j$ is $B'_{i,m'}=B_{i,m'}\setminus\{f[\theta_0],f[\pi_0],\ldots,f[\theta_{u-1}],f[\pi_{u-1}]\}$. Notice that 
 $[\theta_k] \in A_{i,m'}$ if and only if $f[\theta_k] \in B_{i,m'}$. Likewise, $[\pi_k] \in A_{i,m'}$ if and only if $f[\pi_k] \in B_{i,m'}$.
 Therefore, if we restrict 
 $f_{i,m'}:A_{i,m'}\rightarrow B_{i,m'}$ to $A'_{i,m'}$, we get a bijection between $A'_{i,m'}$ and $B'_{i,m'}$.
\end{proof}

Notation: If $b^{(u)}_{\gamma_0\delta_0}$ is an invertible entry of $\mathbb{G}_u$, determined by basis elements
$[\delta_0],[\gamma_0]$ of $\mathbb{G}_u$, let $\mathbb{G}_{u+1}$ be the resolution of $S/M$ such that
\[\mathbb{G}_u=\mathbb{G}_{u+1}\oplus\left(0\rightarrow S[\delta_0]\rightarrow S[\gamma_0]\rightarrow0\right).\]
Assume that $\mathbb{G}_{u+(k-1)}$ has been defined. If $b^{(u+k-1)}_{\gamma_{k-1}\delta_{k-1}}$ is an invertible entry of $\mathbb{G}_{u+(k-1)}$, determined by basis elements $[\delta_{k-1}]$, $[\gamma_{k-1}]$ of 
$\mathbb{G}_{u+(k-1)}$, let 
$\mathbb{G}_{u+k}$ be the resolution of $S/M$ such that 
\[\mathbb{G}_{u+(k-1)}=\mathbb{G}_{u+k}\oplus \left(0\rightarrow S[\delta_{k-1}]\rightarrow S[\gamma_{k-1}]\rightarrow 0\right).\]

\begin{theorem}\label{8}%3.10
 Suppose that $\mathbb{G}_u$, $\mathbb{G}_{u+1}$ are defined as above. If $b^{(u)}_{\pi\theta}$ is an entry of $\mathbb{G}_u$, determined by 
 elements $f[\theta],f[\pi]\in B'$, then $f[\theta],f[\pi]$ are in the basis of $\mathbb{G}_{u+1}$. Moreover, the entry $b^{(u+1)}_{\pi\theta}$ of 
 $\mathbb{G}_{u+1}$, determined by $f[\theta]$ and $f[\pi]$ is noninvertible.
\end{theorem}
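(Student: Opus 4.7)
My plan is to first establish that neither $[\delta_0]$ nor $[\gamma_0]$ lies in $B'$, which will immediately yield the first assertion, and then to verify noninvertibility via the standard cancellation formulas in [Al, Lemma 3.2].

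For the preliminary observation, invertibility of $b^{(u)}_{\gamma_0\delta_0}$ (a unit, so multidegree $1$) together with the fact that cancellation preserves the multigrading gives $\mdeg[\delta_0]=\mdeg[\gamma_0]$. Suppose $[\delta_0]\in B'$. Then $\mdeg[\delta_0]=mm'$ for some $m'$ occurring in $\mathbb{T}_{h'_1,\ldots,h'_c}$, and Theorem \ref{3}(ii) places $[\gamma_0]$ in $B$; because $[\gamma_0]$ belongs to the basis of $\mathbb{G}_u$ (and so is not among the removed $f[\theta_k], f[\pi_k]$), in fact $[\gamma_0]\in B'$. Theorem \ref{6} then makes $b^{(u)}_{\gamma_0\delta_0}$ noninvertible, a contradiction; a symmetric argument handles $[\gamma_0]$. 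Hence $f[\theta], f[\pi]\in B'$ differ from both $[\delta_0]$ and $[\gamma_0]$ and persist in the basis of $\mathbb{G}_{u+1}$.

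For noninvertibility, I will split on whether $\hdeg f[\theta]=\hdeg[\delta_0]$. If not, [Al, Lemma 3.2(iv)] gives $b^{(u+1)}_{\pi\theta}=b^{(u)}_{\pi\theta}$, which is noninvertible by Theorem \ref{6}. Otherwise, [Al, Lemma 3.2(iii)] yields
\[b^{(u+1)}_{\pi\theta}=b^{(u)}_{\pi\theta}-\frac{b^{(u)}_{f[\pi]\delta_0}\,b^{(u)}_{\gamma_0 f[\theta]}}{b^{(u)}_{\gamma_0\delta_0}}.\]
Assuming for contradiction that this is a unit, multidegree accounting (all three terms sit in multidegree $\mdeg[\theta]/\mdeg[\pi]$ after cancelling $m$) forces $\mdeg[\theta]=\mdeg[\pi]$; minimality of $\mathbb{F}_u$ then kills $a^{(u)}_{\pi\theta}$, and Theorem \ref{5}(ii) transports this to $b^{(u)}_{\pi\theta}=0$. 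The correction term must therefore itself be a unit, and in particular $b^{(u)}_{f[\pi]\delta_0}$ is a unit, forcing $\mdeg[\delta_0]=m\,\mdeg[\pi]$. Since $\mdeg[\pi]$ occurs in $\mathbb{T}_{h'_1,\ldots,h'_c}$, Theorem \ref{3}(ii) places $[\delta_0]$ in $B$, hence in $B'$, contradicting the preliminary observation.

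I expect the main obstacle to be the multigraded bookkeeping in the second branch: ruling out the scenario in which the correction term of Lemma 3.2(iii) conspires with $b^{(u)}_{\pi\theta}$ to manufacture a unit. The structural fact $[\delta_0]\notin B'$, combined with the bijection of Theorem \ref{3}(ii), is precisely what breaks this scenario.
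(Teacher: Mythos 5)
Your proposal is correct and follows essentially the same route as the paper: the case split via [Al, Lemma 3.2(iii)/(iv)], the use of Theorem \ref{6} (equivalently Theorem \ref{5}(ii) plus minimality of $\mathbb{F}_u$), and the multidegree argument showing that $[\delta_0],[\gamma_0]$ cannot lie in $B'$. The only difference is cosmetic: where the paper argues directly that each piece of the correction term is noninvertible, you run the same multigraded bookkeeping as a proof by contradiction, landing on the same conclusion.
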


\begin{proof}
 Since $\mathbb{G}_u=\mathbb{G}_{u+1}\oplus\left(0\rightarrow S[\delta_0]\rightarrow S[\gamma_0]\rightarrow0\right)$, we have that 
 $b^{(u)}_{\gamma_0\delta_0}$ is invertible, $[\delta_0],[\gamma_0]$ are in $\mathbb{G}_u$, in consecutive homological degrees, and 
 $\mdeg[\delta_0]=\mdeg[\gamma_0]$. Suppose that $[\delta_0],[\gamma_0]\in B'$. Then there are elements $[\sigma],[\tau]\in A'$, in consecutive homological 
 degrees, such that $f[\sigma]=[\delta_0]$ and $f[\tau]=[\gamma_0]$. By Theorem \ref{7}(ii), $[\sigma],[\tau]$ are in $\mathbb{F}_u$ and determine and entry 
 $a^{(u)}_{\tau\sigma}$. Now, it follows from Theorem \ref{5}(ii), that $b^{(u)}_{\gamma_0\delta_0}=(-1)^ja^{(u)}_{\tau\sigma}$. This means that 
 $a^{(u)}_{\tau\sigma}$ is invertible and $\mathbb{F}_u$ is not minimal, a contradiction. 
 On the other hand, if only one of $[\delta_0],[\gamma_0]$ is in $B'$, then $\mdeg[\delta_0]\neq \mdeg[\gamma_0]$; another contradiction.
 We conclude that neither $[\delta_0]$ nor $[\gamma_0]$ is in $B'$ 
 and therefore, the pairs $\left(f[\theta],f[\pi]\right)$; $\left([\delta_0],[\gamma_0]\right)$ are disjoint. This proves that $f[\theta],f[\pi]$ are in 
 the basis of $\mathbb{G}_{u+1}$.
 
 Let us finally prove that $b^{(u+1)}_{\pi\theta}$ is noninvertible. If $\hdeg f[\theta]\neq\hdeg[\delta_0]$, then $b^{(u+1)}_{\pi\theta}=b^{(u)}_{\pi\theta}$ by [Al, Lemma 3.2(iv)], 
 and by Theorem \ref{6}, $b^{(u+1)}_{\pi\theta}$ is noninvertible. If $\hdeg f[\theta]=\hdeg[\delta_0]$, by [Al, Lemma 3.2(iii)], we have
 $b^{(u+1)}_{\pi\theta}=b^{(u)}_{\pi\theta}-\dfrac{b^{(u)}_{\pi\delta_0}b^{(u)}_{\gamma_0\theta}}{b^{(u)}_{\gamma_0\delta_0}}$. 
 Since $b^{(u)}_{\pi\theta}=(-1)^ja^{(u)}_{\pi\theta}$, $b^{(u)}_{\pi\theta}$ is noninvertible. Since $\mdeg f[\pi]\neq\mdeg[\delta_0]$, the entry 
 $b^{(u)}_{\pi\delta_0}$ of $\mathbb{G}_u$, determined by $[\delta_0]$, $f[\pi]$, is noninvertible. Hence, the product $b^{(u)}_{\pi\delta_0}.b^{(u)}_{\gamma_0\theta}$
 must be noninvertible. This means that the quotient
 $\dfrac{b^{(u)}_{\pi\delta_0}b^{(u)}_{\gamma_0\theta}}{b^{(u)}_{\gamma_0\delta_0}}$ is noninvertible. Finally, $b^{(u+1)}_{\pi\theta}$ is noninvertible, for the 
 difference of two noninvertible monomials is noninvertible.
\end{proof}

\begin{theorem}\label{9}
 Suppose that $\mathbb{G}_u,\ldots,\mathbb{G}_{u+v}$ are defined as above. If $b^{(u)}_{\pi\theta}$ is an entry of $\mathbb{G}_u$, determined by 
 elements $f[\theta],f[\pi]\in B'$, then $f[\theta],f[\pi]$ are in the basis of $\mathbb{G}_{u+v}$, and the entry $b^{(u+v)}_{\pi\theta}$ of 
 $\mathbb{G}_{u+v}$, determined by $f[\theta],f[\pi]$ is noninvertible.
\end{theorem}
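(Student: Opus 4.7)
\textbf{Proof proposal for Theorem \ref{9}.}

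The plan is to proceed by induction on $v$, using Theorems \ref{6} and \ref{8} as the base of the induction (with $v=0$ or $v=1$). The whole argument closely parallels the proof of Theorem \ref{8}, but at each stage the role played there by $\mathbb{G}_u$ is played now by the already-reduced complex $\mathbb{G}_{u+v-1}$, and the hypothesis that $b^{(u)}_{\pi\theta}$ is noninvertible is replaced by the inductive hypothesis that $b^{(u+v-1)}_{\pi\theta}$ is noninvertible and that $f[\theta],f[\pi]$ still lie in the basis of $\mathbb{G}_{u+v-1}$.

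For the inductive step, assume the statement for $v-1$ and consider the passage from $\mathbb{G}_{u+v-1}$ to $\mathbb{G}_{u+v}$, obtained by cancelling a pair $[\delta_{v-1}],[\gamma_{v-1}]$ corresponding to an invertible entry $b^{(u+v-1)}_{\gamma_{v-1}\delta_{v-1}}$. The first task is to show that neither $[\delta_{v-1}]$ nor $[\gamma_{v-1}]$ lies in $B'$, since once this is established the pairs $(f[\theta],f[\pi])$ and $([\delta_{v-1}],[\gamma_{v-1}])$ are disjoint and therefore $f[\theta],f[\pi]$ survive in the basis of $\mathbb{G}_{u+v}$. If both $[\delta_{v-1}]$ and $[\gamma_{v-1}]$ were in $B'$, then applying the inductive hypothesis to the entry $b^{(u)}_{\gamma_{v-1}\delta_{v-1}}$ of $\mathbb{G}_u$ (both endpoints being in $B'$) would give that $b^{(u+v-1)}_{\gamma_{v-1}\delta_{v-1}}$ is noninvertible, contradicting the choice of cancellation. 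If only one of the two belongs to $B'$, the two endpoints would have different multidegrees (by [Al, Lemma 4.3], since membership in $B$ is determined by which dominant monomials among $m_1,\ldots,m_d$ appear), which is incompatible with the entry connecting them being invertible in a multigraded complex.

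The second task is to verify that $b^{(u+v)}_{\pi\theta}$ is itself noninvertible, and here I would mimic exactly the end of the proof of Theorem \ref{8}, splitting according to whether $\hdeg f[\theta]=\hdeg[\delta_{v-1}]$ or not. In the first case [Al, Lemma 3.2(iv)] gives $b^{(u+v)}_{\pi\theta}=b^{(u+v-1)}_{\pi\theta}$, which is noninvertible by the inductive hypothesis. In the second case [Al, Lemma 3.2(iii)] yields
\[b^{(u+v)}_{\pi\theta}=b^{(u+v-1)}_{\pi\theta}-\dfrac{b^{(u+v-1)}_{\pi\delta_{v-1}}\,b^{(u+v-1)}_{\gamma_{v-1}\theta}}{b^{(u+v-1)}_{\gamma_{v-1}\delta_{v-1}}},\]
and since $\mdeg f[\pi]\ne\mdeg[\delta_{v-1}]$ (as $f[\pi]\in B'$ and $[\delta_{v-1}]\notin B'$), the entry $b^{(u+v-1)}_{\pi\delta_{v-1}}$ is noninvertible, making the whole correction term noninvertible; noninvertibility of $b^{(u+v)}_{\pi\theta}$ then follows, just as in Theorem \ref{8}.

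The main obstacle is the first task — ruling out $[\delta_{v-1}],[\gamma_{v-1}]\in B'$ — because this is what forces the inductive hypothesis to be strong enough to be applied at every intermediate stage, and it must be combined with the multidegree characterization of $B$ (via [Al, Lemma 4.3]) to handle the mixed case where only one endpoint lies in $B'$. Once this dichotomy is in place, everything else reduces to bookkeeping with the standard cancellation formulas from [Al, Lemma 3.2].
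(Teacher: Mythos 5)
Your proposal is correct and follows essentially the same route as the paper: induction on $v$ with Theorem \ref{8} as the base case, ruling out $[\delta_{v-1}],[\gamma_{v-1}]\in B'$ via the inductive hypothesis (and the mixed case via unequal multidegrees), then invoking [Al, Lemma 3.2(iii),(iv)] exactly as in Theorem \ref{8} to conclude that $b^{(u+v)}_{\pi\theta}$ is noninvertible. The only cosmetic difference is that you phrase the mixed-case multidegree argument through [Al, Lemma 4.3], while the paper simply asserts the multidegree inequality, so no substantive gap separates the two.
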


\begin{proof}
 The proof is by induction on $v$. If $v=1$, the statement is the content of Theorem \ref{8}.
 Let us assume that the statement holds for $v-1$.
 Since $\mathbb{G}_{u+(v-1)}=\mathbb{G}_{u+v}\oplus\left(0\rightarrow S[\delta_{v-1}]\rightarrow S[\gamma_{v-1}]\rightarrow 0\right)$, it follows that the 
 entry $b^{(u+v-1)}_{\gamma_{v-1}\delta_{v-1}}$ of $\mathbb{G}_{u+(v-1)}$, determined by $[\delta_{v-1}],[\gamma_{v-1}]$, is invertible. If we had that 
 $[\delta_{v-1}],[\gamma_{v-1}]\in B'$, then, by induction hypothesis, $b^{(u+v-1)}_{\gamma_{v-1}\delta_{v-1}}$ would be noninvertible, a contradiction.
 On the other hand, if exactly one of $[\delta_{v-1}],[\gamma_{v-1}]$ were in $B'$, their multidegrees would be different, another contradiction. Hence, neither 
 $[\delta_{v-1}]$ nor $[\gamma_{v-1}]$ is in $B'$. This means that the pairs $\left(f[\theta],f[\pi]\right)$, $\left([\delta_{v-1}],[\gamma_{v-1}]\right)$ are 
 disjoint. Thus, $f[\theta],f[\pi]$ are in the basis of $\mathbb{G}_{u+v}$.\\
 Let us now prove that $b^{(u+v)}_{\pi\theta}$ is noninvertible. If $\hdeg f[\theta]\neq\hdeg[\delta_{v-1}]$, then $b^{(u+v)}_{\pi\theta}=b^{(u+v-1)}_{\pi\theta}$ by [Al, Lemma 3.2 (iv)],
 and the result follows from induction hypothesis. Now, if $\hdeg f[\theta]=\hdeg[\delta_{v-1}]$,
 \[b^{(u+v)}_{\pi\theta}=b^{(u+v-1)}_{\pi\theta}-\dfrac{b^{(u+v-1)}_{\pi\delta_{v-1}}b^{(u+v-1)}_{\gamma_{v-1}\theta}}{b^{(u+v-1)}_{\gamma_{v-1}\delta_{v-1}}}\]
 by [Al, Lemma 3.2(iii)].
 Notice that $b^{(u+v-1)}_{\pi\theta}$ is noninvertible, by induction hypothesis. Since $\mdeg f[\pi]\neq\mdeg [\delta_{v-1}]$, it follows that the entry 
 $b^{(u+v-1)}_{\pi\delta_{v-1}}$ of $\mathbb{G}_{u+(v-1)}$, determined by $[\delta_{v-1}],f[\pi]$, is noninvertible. This implies that the product 
 $b^{(u+v-1)}_{\pi\delta_{v-1}}b^{(u+v-1)}_{\gamma_{v-1}\theta}$ is noninvertible. Moreover, since $b^{(u+v-1)}_{\gamma_{v-1}\delta_{v-1}}$ is invertible, 
 the quotient
 $\dfrac{b^{(u+v-1)}_{\pi\delta_{v-1}}b^{(u+v-1)}_{\gamma_{v-1}\theta}}{b^{(u+v-1)}_{\gamma_{v-1}\delta_{v-1}}}$
 is noninvertible. Finally, $b^{(u+v)}_{\pi\theta}$ is noninvertible, for the difference of two noninvertible monomials is noninvertible.
\end{proof}
Since the process of making standard cancellations must eventually terminate, there is an integer $v\geq0$, such that 
$\mathbb{G}_u,\ldots,\mathbb{G}_{u+v}$ are defined as above, and $\mathbb{G}_{u+v}$ is a minimal resolution of $S/M$.

For the rest of this section, $v$ is such an integer and $G_{u+v}$ is such a minimal resolution. Moreover, the resolutions 
$\mathbb{G}_u,\ldots,\mathbb{G}_{u+v}$ are fixed for the rest of this section.

\begin{theorem}\label{10}
 Let $m'$ be a multidegree that occurs in $\mathbb{T}_{M_m}$. For each $i$, there is a bijective correspondence between the basis elements of $\mathbb{G}_u$, 
 with multidegree $mm'$ and homological degree $i+j$, and the basis elements of $\mathbb{G}_{u+v}$, with multidegree $mm'$ and homological degree $i+j$.
\end{theorem}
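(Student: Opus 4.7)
The plan is to exhibit the bijection as the identity map on the relevant basis elements. Since each cancellation step $\mathbb{G}_{u+k-1} \to \mathbb{G}_{u+k}$ only removes a pair of basis elements, the basis of $\mathbb{G}_{u+v}$ is a subset of the basis of $\mathbb{G}_u$; restricting to multidegree $mm'$ and homological degree $i+j$, one inclusion is thus immediate.

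For the reverse inclusion, I would first invoke Theorem \ref{7}(ii) to identify the basis elements of $\mathbb{G}_u$ with multidegree $mm'$ and homological degree $i+j$ as precisely the set $B'_{i,m'} \subseteq B'$. The key step is then to apply Theorem \ref{9}, which asserts that any $f[\theta], f[\pi] \in B'$ forming an entry of $\mathbb{G}_u$ remain in the basis of $\mathbb{G}_{u+v}$. For any $f[\sigma] \in B'_{i,m'}$, pairing it with some $f[\sigma'] \in B'$ in an adjacent homological degree yields that $f[\sigma]$ survives to $\mathbb{G}_{u+v}$. More cleanly, I would extract from the induction in the proof of Theorem \ref{9} the stronger statement that at every step $k=0,\ldots,v-1$, neither $[\delta_k]$ nor $[\gamma_k]$ lies in $B'$; this is exactly what the minimality-of-$\mathbb{F}_u$ and multidegree arguments established at each inductive stage.

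The main obstacle, such as it is, is bookkeeping: Theorem \ref{9} is phrased in terms of entries $b^{(u)}_{\pi\theta}$, i.e., pairs of basis elements in consecutive homological degrees, whereas Theorem \ref{10} requires element-by-element survival. Once this gap is closed, the two sets of basis elements coincide, and the identity map supplies the desired bijection.
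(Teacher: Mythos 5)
Your proposal is correct and takes essentially the same route as the paper: the paper's own proof is exactly the two-step argument that the basis of $\mathbb{G}_{u+v}$ is contained in that of $\mathbb{G}_u$ (giving one inclusion) and that the converse holds ``by Theorem \ref{9}.'' Your bookkeeping remark---that the right way to use Theorem \ref{9} is to extract from its induction the statement that no cancelled pair $[\delta_k],[\gamma_k]$ ever lies in $B'$, so every element of $B'_{i,m'}$ survives to $\mathbb{G}_{u+v}$---is precisely the reading the paper leaves implicit, and it closes the pairs-versus-elements gap correctly.
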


\begin{proof}
 Since the basis of $\mathbb{G}_{u+v}$ is contained in that of $\mathbb{G}_u$, every basis element of $\mathbb{G}_{u+v}$, with multidegree $mm'$ and 
 homological degree $i+j$ is in $\mathbb{G}_u$. Conversely, every basis element of $\mathbb{G}_u$, with multidegree $mm'$ and homological degree $i+j$ is in 
 $\mathbb{G}_{u+v}$, by Theorem \ref{9}.
\end{proof}

\begin{theorem}\label{11}%3.10
 Let $\mathbb{F}$ be a minimal resolution of $S/{M_m}$, and let $\mathbb{G}$ be a minimal free resolution of $S/M$. Let $m'$ be a multidegree
 that occurs in $\mathbb{T}_{M_m}$. Then
 \begin{enumerate}[(i)]
  \item There are no basis elements of $\mathbb{G}$, with multidegree $mm'$ and homological degree less than $j$.
  \item For each $i$, there is a bijective correspondence between the basis elements of $\mathbb{F}$, with multidegree $m'$ and homological
  degree $i$, and the basis elements of $\mathbb{G}$, with multidegree $mm'$ and homological degree $i+j$.
 \end{enumerate}
\end{theorem}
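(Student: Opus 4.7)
The plan is to reduce the statement about \emph{arbitrary} minimal resolutions to the specific minimal resolutions $\mathbb{F}_u$ and $\mathbb{G}_{u+v}$ already constructed, and then invoke the standard uniqueness of multigraded Betti numbers. Recall that the number of basis elements of any minimal free resolution of $S/N$, in a fixed multidegree and homological degree, is the multigraded Betti number $\betti_{i,l}(S/N)$, so this count is independent of the particular minimal resolution chosen. Thus it suffices to prove (i) and (ii) with $\mathbb{F}$ replaced by $\mathbb{F}_u$ and $\mathbb{G}$ replaced by $\mathbb{G}_{u+v}$, which are minimal by the observations immediately before Theorems \ref{6} and \ref{10}.

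For part (i), I would argue as follows. By Theorem \ref{7}(i), there are no basis elements of $\mathbb{G}_u$ with multidegree $mm'$ and homological degree less than $j$. The basis of $\mathbb{G}_{u+v}$ is obtained from that of $\mathbb{G}_u$ by successively removing pairs of elements, so it is contained in the basis of $\mathbb{G}_u$; hence the same vanishing statement holds in $\mathbb{G}_{u+v}$. Translating via uniqueness of multigraded Betti numbers, $\betti_{i,mm'}(S/M)=0$ for $i<j$, and so no basis element of $\mathbb{G}$ has multidegree $mm'$ and homological degree below $j$.

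For part (ii), I would compose two bijections already at our disposal. Theorem \ref{7}(ii) provides, for each $i$, a bijection between the basis elements of $\mathbb{F}_u$ in multidegree $m'$ and homological degree $i$ and the basis elements of $\mathbb{G}_u$ in multidegree $mm'$ and homological degree $i+j$. Theorem \ref{10} then provides a bijection between the latter set and the basis elements of $\mathbb{G}_{u+v}$ with the same multidegree and homological degree. Composing these two yields a bijection between the basis elements of the minimal resolution $\mathbb{F}_u$ in the specified bidegree and those of the minimal resolution $\mathbb{G}_{u+v}$ in the shifted bidegree. Equating cardinalities gives $\betti_{i,m'}(S/M_m)=\betti_{i+j,mm'}(S/M)$, and by uniqueness of multigraded Betti numbers this equality of cardinalities persists when one replaces $\mathbb{F}_u$ and $\mathbb{G}_{u+v}$ by \emph{any} minimal resolutions $\mathbb{F}$ and $\mathbb{G}$ of $S/M_m$ and $S/M$ respectively, yielding the claimed bijection.

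There is no real obstacle here once Theorems \ref{7} and \ref{10} are in hand; the only subtlety is being careful that the invocation of uniqueness of multigraded Betti numbers is legitimate, i.e.\ that both $\mathbb{F}_u$ and $\mathbb{G}_{u+v}$ are genuinely minimal (which has already been established by the termination of the standard-cancellation procedure and Theorems \ref{6} and \ref{9}). Everything else is a direct assembly of bijections already built in the preceding results.
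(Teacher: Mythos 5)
Your proposal is correct and follows essentially the same route as the paper: part (i) comes from the vanishing in the Taylor resolution (Theorem \ref{3}(i), equivalently Theorem \ref{7}(i) plus containment of bases), and part (ii) is the composition of the bijections from Theorem \ref{7}(ii) and Theorem \ref{10}. The only difference is that you spell out the passage from the constructed minimal resolutions $\mathbb{F}_u$, $\mathbb{G}_{u+v}$ to arbitrary minimal resolutions via uniqueness of multigraded Betti numbers, a step the paper leaves implicit.
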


\begin{proof}
 (i) Since the basis of $\mathbb{G}$ is contained in that of $\mathbb{T}_M$, this part follows from Theorem \ref{3}(i).\\
 (ii) This part follows immediately from Theorem \ref{7}(ii) and Theorem \ref{10}.
\end{proof}

\begin{example}\label{12}
Consider Example \ref{example 1}, again. Recall that $M=(m_1,m_2,h_1,h_2,h_3,h_4,h_5)=(a^3b^2,c^3d,ac^2,a^2c,b^2d,abc,bcd)$, and $m=\lcm(m_1)=a^3b^2$. Since $M_m=(c^2,c,d,c,cd)=(c,d)$, the minimal resolution of $S/M_m$ is of the form
\[\mathbb{F}: 0\rightarrow S[c,d] \rightarrow 
\begin{array}{c} 
S[c]\\ 
\oplus \\ 
S[d] 
\end{array} 
\rightarrow S[\varnothing] \rightarrow S/M_m \rightarrow 0. \]
Thus, $\betti_{0,1}\left(S/M_m\right)=\betti_{1,c}\left(S/M_m\right)=\betti_{1,d}\left(S/M_m\right)=\betti_{2,cd}\left(S/M_m\right)=1$.
By Theorem \ref{11}(ii) (with $m=a^3b^2$, and $j=1$),
$\betti_{1,a^3b^2}\left(S/M\right)=\betti_{2,a^3b^2c}\left(S/M\right)=\betti_{2,a^3b^2d}\left(S/M\right)=\betti_{3,a^3b^2cd}\left(S/M\right)=1$. 
 By Theorem \ref{11}(i),
$\betti_{0,a^3b^2}\left(S/M\right)=\betti_{0,a^3b^2c}\left(S/M\right)=\betti_{0,a^3b^2d}\left(S/M\right)=\betti_{0,a^3b^2cd}\left(S/M\right)=1$. (In the next section we will give the entire list of multigraded Betti numbers of $S/M$.)    
\end{example}

\section{Structural Decomposition Theorems}%4

The notation below retains its meaning until the end of this section. 

Let $M$ be an ideal with minimal generating set $G=\{m_1,\ldots, m_q, n_1, \ldots, n_p\}$, where $m_1,\ldots,m_q$ are dominant and $n_1,\ldots,n_p$ are nondominant. Let $1\leq d \leq q$, and let $H=\{m_{d+1},\ldots,m_q,n_1,\ldots,n_p\}$. Then $G$ can be expressed in the form $G=\{m_1,\ldots,m_d,h_1,\ldots,h_c\}$, where $H=\{h_1,\ldots,h_c\}$.

\begin{itemize}
\item If $c>0$, let $C=\{(j,m)\in \mathbb{Z}^+ \times S:\text{ there are integers }1\leq r_1<\cdots <r_j \leq d \text{, such that }m=\lcm(m_{r_1},\ldots,m_{r_j})\} \bigcup \{(0,1)\}$. For each $(j,m) \in C$, let $M_m=(h'_1,\ldots,h'_c)$,
where $h'_i=\dfrac{\lcm(m,h_i)}{m}$.
\item If $c=0$, let $C=\{(0,1)\}$ and let $M_1=M$.
\end{itemize}

\begin{theorem}\label{1SD} %4.1
For each integer $k$ and each monomial $l$,
\[\betti_{k,l} (S/M)= \sum\limits_{(j,m)\in C} \betti_{k-j,l/m}(S/M_m).\]
\end{theorem}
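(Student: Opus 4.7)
The plan is to fix a homological degree $k$ and a multidegree $l$ and to compare both sides by counting basis elements of a fixed minimal free resolution $\mathbb{G}$ of $S/M$ with multidegree $l$ and homological degree $k$; by definition there are $\betti_{k,l}(S/M)$ of these. On the right-hand side, for each $(j,m)\in C$, Theorem \ref{11}(ii) translates $\betti_{k-j,l/m}(S/M_m)$ into a count of basis elements of $\mathbb{G}$ with multidegree $l$ and homological degree $k$ whose intersection with $\{m_1,\ldots,m_d\}$ equals the subset $\{m_{r_1},\ldots,m_{r_j}\}$ whose lcm is $m$. The decisive input is [Al, Lemma 4.3], which asserts that any two basis elements of $\mathbb{T}_M$ sharing a multidegree contain the same dominant monomials. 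This will force at most one summand on the right-hand side to be nonzero.

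I would first dispose of the case where $l$ does not occur in $\mathbb{T}_M$, so that $\betti_{k,l}(S/M)=0$. For any $(j,m)\in C$ with $m\mid l$, Proposition \ref{1} shows that a basis element of $\mathbb{T}_{M_m}$ with multidegree $l/m$ would lift to one of $\mathbb{T}_M$ with multidegree $l$, a contradiction; hence $l/m$ cannot occur in $\mathbb{T}_{M_m}$, and so $\betti_{k-j,l/m}(S/M_m)=0$. If $m\nmid l$ the term is zero by convention. Both sides vanish.

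Next, assume $l$ occurs in $\mathbb{T}_M$. By [Al, Lemma 4.3], all basis elements of $\mathbb{T}_M$ of multidegree $l$ share a common set of dominant monomials; let $S^*$ denote its intersection with $\{m_1,\ldots,m_d\}$, let $j^*=|S^*|$, and set $m^*=\lcm(S^*)$, with $m^*=1$ and $j^*=0$ if $S^*=\varnothing$. Because each $m_i$ has a variable in which it strictly dominates all other elements of $G$, the assignment sending a subset of $\{m_1,\ldots,m_d\}$ to its lcm is injective; hence $(j^*,m^*)\in C$ is the unique pair arising from $S^*$. Theorem \ref{11}(ii) then yields $\betti_{k,l}(S/M)=\betti_{k-j^*,l/m^*}(S/M_{m^*})$, accounting for the $(j^*,m^*)$-summand on the right-hand side.

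It remains to show that $\betti_{k-j,l/m}(S/M_m)=0$ for every $(j,m)\in C$ with $(j,m)\neq(j^*,m^*)$. The only nontrivial situation is $m\mid l$ together with $l/m$ occurring in $\mathbb{T}_{M_m}$. Then Theorem \ref{3}(ii) produces a basis element $[m_{r_1},\ldots,m_{r_j},h_{s_1},\ldots,h_{s_i}]$ of $\mathbb{T}_M$ of multidegree $l$ whose intersection with $\{m_1,\ldots,m_d\}$ equals $\{m_{r_1},\ldots,m_{r_j}\}$; by [Al, Lemma 4.3] this subset must coincide with $S^*$, contradicting $(j,m)\neq(j^*,m^*)$. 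The main obstacle is precisely this uniqueness step: combining the injectivity of the lcm-of-subset map on the dominant monomials with [Al, Lemma 4.3] to single out a unique contributing pair. Once this is secured, the sum collapses to the single term indexed by $(j^*,m^*)$, completing the proof.
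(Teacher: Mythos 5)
Your proposal is correct and follows essentially the same route as the paper's proof: identify, via [Al, Lemma 4.3], the unique set of dominant monomials shared by all Taylor symbols of multidegree $l$, use Theorem \ref{11}(ii) to equate $\betti_{k,l}(S/M)$ with the single summand indexed by that set, and kill all other summands by lifting through Proposition \ref{1}/Theorem \ref{3}(ii) and invoking Lemma 4.3 again. The only differences are cosmetic (you split on whether $l$ occurs in $\mathbb{T}_M$ rather than on whether $\betti_{k,l}(S/M)$ vanishes, and you make explicit the injectivity of the subset-to-lcm map that the paper leaves implicit), so no further comparison is needed.
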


\begin{proof}
If $c=0$, the theorem is trivial. Let us consider the case $c>0$. \\
If $\betti_{k,l}(S/M)=0$, then $\sum\limits_{(j,m)\in C}\betti_{k-j,l/m }(S/M_m)=0$, by Theorem \ref{11}(ii).
 Suppose now that $\betti_{k,l}(S/M)\neq 0$. Then there is an element $[\tau]$ in the basis of a minimal resolution of $S/M$, such that $\hdeg[\tau]=k$ and 
$\mdeg[\tau]=l$. Let $m_{r_1}, \ldots,m_{r_j}$ be the dominant monomials that are contained in $[\tau]$, and such that $\{m_{r_1}, \ldots , m_{r_j}\}$ is a subset of $\{m_1, \ldots , m_d\}$. Since all basis elements of  $\mathbb{T}_M$ with equal multidegree must contain the same dominant monomials [Al, Lemma 4.3], every basis element of $\mathbb{T}_M$ in homological degree $k$ and multidegree $l$ must be of the form $[m_{r_1},\ldots,m_{r_j},h_{s_1},\ldots,h_{s_{k-j}}]$. Let $m=\lcm(m_{r_1},\ldots,m_{r_j})$. Then $(j,m)\in C$, and $\betti_{k,l}(S/M)=\betti_{k-j,l/m}(S/M_m)$, by Theorem \ref{11}(ii).\\
 We will complete the proof by showing that $\betti_{k-j',l/m}(S/M_{m'})=0$, for all $(j',m')\in C \setminus \{(j,m)\}$. Let $(j',m')\in C$. Then there are dominant monomials $m_{u_1},\ldots,m_{u_j}$, such that 
 $m'=\lcm(m_{u_1},\ldots,m_{u_{j'}})$. Suppose that $\betti_{k-j,l/m}(S/M_{m'})\neq 0$. Then $\mathbb{T}_{M_{m'}}$ has a basis element $[h'_{t_1},\ldots,h'_{t_{k-j'}}]$ with multidegree $l/m$. By Proposition \ref{1},
 $\l=m \mdeg[h'_{t_1},\ldots,h'_{t_{k-j'}}]=\mdeg[m_{u_1},\ldots,m_{u_{j'}},h_{t_1},\ldots,h_{t_{k-j'}}]$. Since the basis elements of $\mathbb{T}_M$ in homological degree $k$ and multidegree $l$ are of the form 
 $[m_{r_1},\ldots,m_{r_j},h_{s_1},\ldots,h_{s_{k-j}}]$, we must have that $\{m_{u_1},\ldots,m_{u_{j'}}\}=\{m_{r_1},\ldots,m_{r_j}\}$. In particular, $j'=j$, and 
 $m'=\lcm(m_{u_1},\ldots,m_{u_{j'}})=\lcm(m_{r_1},\ldots,m_{r_j})=m$. Thus $(j',m')=(j,m)$. 
\end{proof}

\begin{definition}%4.2
Recall that $G=\{m_1,\ldots, m_q, n_1, \ldots, n_p\}=\{m_1,\ldots,m_d,h_1,\ldots,h_c\}$ is the minimal generating set of $M$. If $d=q$, the equation 
\[\betti_{k,l} (S/M)= \sum\limits_{(j,m)\in C} \betti_{k-j,l/m}(S/M_m),\]
given by Theorem \ref{1SD}, will be called the \textbf{first structural decomposition} of $M$.
\end{definition}

Note that when $d=q$, we have that $c=p$, and $\{h_1,\ldots,h_c\}=\{n_1,\ldots,n_p\}$. 

\begin{example}\label{example 1SD}%4.3
Consider Example \ref{example 1}, again. Recall that $M=(m_1,m_2,n_1,n_2,n_3,n_4,n_5)=(a^3b^2,c^3d,ac^2,a^2c,b^2d,abc,bcd)$, where $\{h_1,\ldots,h_5\}=\{n_1,\ldots,n_5\}$, and hence, $d=q$. By definition, 
\begin{dmath*}
C=\{(2,\lcm(a^3b^2,c^3d));(1,\lcm(a^3b^2));(1,\lcm(c^3d));(0,1)\}=\{(2,a^3b^2c^3d);(1,a^3b^2);(1,c^3d);(0,1)\}. 
\end{dmath*}
Now, each ordered pair $(j,m)$ in $C$, determines a monomial ideal $M_m$. Namely, $(2,a^3b^2c^3d)$ defines $M_{a^3b^2c^3d}=(h'_1,h'_2,h'_3,h'_4,h'_5)$, where $h'_1=\dfrac{\lcm(a^3b^2c^3d,ac^2)}{a^3b^2c^3d}=1$. Therefore, $M_{a^3b^2c^3d}=(1)=S$.\\
Likewise, $(1,a^3b^2)$ defines $M_{a^3b^2}=(c,d)$ (recall Example \ref{12}). \\
Also, $(1,c^3d)$ defines $M_{c^3d}=(h'_1,h'_2,h'_3,h'_4,h'_5)$, where $h'_1=\dfrac{\lcm(c^3d,ac^2)}{c^3d}=a$; $h'_2=\dfrac{\lcm(c^3d,a^2c)}{c^3d}=a^2$; $h'_3=\dfrac{\lcm(c^3d,b^2d)}{c^3d}=b^2$; $h'_4=\dfrac{\lcm(c^3d,abc)}{c^3d}=ab$; 
$h'_5=\dfrac{\lcm(c^3d,bcd)}{c^3d}=b$. Thus, $M_{c^3d}=(a,a^2,b^2,ab,b)=(a,b)$.\\
Finally, $(0,1)$ defines $M_1=M=(ac^2,a^2c,b^2d,abc,bcd)$. Therefore, the first structural decomposition decomposition of $M$ is
\[
\betti_{k,l}(S/M)=\betti_{k-1,l/a^3b^2}\left(\dfrac{S}{(c,d)}\right)+\betti_{k-1,l/c^3d}\left(\dfrac{S}{(a,b)}\right)+\betti_{k,l}\left(\dfrac{S}{(ac^2,a^2c,b^2d,abc,bcd)}\right).\\ 
\]
\end{example}

\begin{theorem}\label{2SD}%4.4
 There is a family $\mathscr{D}$ of dominant ideals and a family $\mathscr{N}$ of purely nondominant ideals, such that 
\[\betti_{k,l}(S/M)=\sum\limits_{D\in \mathscr{D}} \betti_{k-j_D,l/m_D}(S/D) + \sum\limits_{N\in \mathscr{N}} \betti_{k-j_N,l/m_N}(S/N),\]
 where $j_D$, $j_N$ are integers that depend on $D$ and $N$, respectively, and $m_D$, $m_N$ are monomials that depend on $D$ and $N$, respectively.
\end{theorem}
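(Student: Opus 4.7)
The plan is to prove the theorem by induction on the size $q+p$ of the minimal generating set of $M$, using Theorem \ref{1SD} as the recursion step; here $q$ denotes the number of dominant generators of $M$ and $p$ the number of nondominant generators.

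For the base case, if $p = 0$ then $M$ is dominant, and the conclusion holds with $\mathscr{D} = \{M\}$, $j_M = 0$, $m_M = 1$, and $\mathscr{N} = \varnothing$. Symmetrically, if $q = 0$ then $M$ is purely nondominant and one takes $\mathscr{N} = \{M\}$, $j_M = 0$, $m_M = 1$, and $\mathscr{D} = \varnothing$. In either case the asserted identity is immediate.

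For the inductive step, assume $q \geq 1$ and $p \geq 1$, and apply Theorem \ref{1SD} with $d = q$ to obtain
\[\betti_{k,l}(S/M) = \sum_{(j,m) \in C} \betti_{k-j,\, l/m}(S/M_m).\]
By construction each $M_m$ is generated by the $p$ monomials $h'_1, \ldots, h'_p$. If some $h_i$ divides $m$, then the corresponding $h'_i$ equals $1$, forcing $M_m = S$ and making the term vanish; such summands are discarded. For every remaining $M_m$, the minimal generating set has at most $p$ elements, and hence strictly fewer than $q + p$ because $q \geq 1$. Applying the induction hypothesis yields families $\mathscr{D}_m$ of dominant ideals and $\mathscr{N}_m$ of purely nondominant ideals, integers $j'_D$, $j'_N$, and monomials $m'_D$, $m'_N$, with
\[\betti_{k-j,\, l/m}(S/M_m) = \sum_{D \in \mathscr{D}_m} \betti_{k-j-j'_D,\, l/(mm'_D)}(S/D) + \sum_{N \in \mathscr{N}_m} \betti_{k-j-j'_N,\, l/(mm'_N)}(S/N).\]
Substituting these expressions back, setting $j_D := j + j'_D$, $m_D := m m'_D$ (and analogously for $N$), and taking $\mathscr{D}$ and $\mathscr{N}$ to be the disjoint unions of the $\mathscr{D}_m$ and $\mathscr{N}_m$ over $(j,m) \in C$ produces the claimed decomposition. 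The main technical obstacle is simply the strict descent of the induction parameter, which is guaranteed by the bound on the size of the minimal generating set of $M_m$ together with $q \geq 1$; once this is confirmed, the remainder of the proof is routine substitution and reindexing.
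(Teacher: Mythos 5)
Your proof is correct and follows essentially the same route as the paper: both recursively apply Theorem \ref{1SD} (with $d=q$) until every ideal appearing in the decomposition is dominant or purely nondominant, then substitute and reindex. The only difference is bookkeeping: you organize the recursion as an induction on the cardinality of the minimal generating set (discarding the vanishing terms with $M_m=S$), whereas the paper iterates the first structural decomposition at most $p$ times and tracks the strictly decreasing number of nondominant generators; both arguments terminate for the same reason and produce the same constructive decomposition.
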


\begin{proof}
Let  \[\betti_{k,l}(S/M)=\sum\limits_{(j,m)} \betti_{k-j,l/m}(S/M_m)\]
be the first structural decoposition of $M$. If some $M_m=(h'_1,\ldots,h'_p)$ (recall that $c=p$) is neither dominant nor purely nondominant, then its minimal generating set is of the form $\{u_1,\ldots, u_{q_1},v_1,\ldots,v_{p_1}\}$, where $u_1,\ldots,u_{q_1}$ are dominant, $v_1,\ldots,v_{p_1}$ are nondominant, $q_1 \geq 1$, $p_1 \geq 1$, and $q_1 +p_1 \leq p$. In particular, $p_1\leq p-1$. Let 
 $\betti_{k,l}(S/M_m)=\sum\limits_{(j',m')} \betti_{k-j',l/m'}(S/M_{m,m'})$ be the first structural decomposition of $M_m$. Combining the last two identities, we obtain 
\[\betti_{k,l}(S/M)=\sum\limits_{(j,m)}\betti_{k-j,l/m}(S/M_m) = \sum\limits_{(j,m)} \sum\limits_{(j',m')} \betti_{k-j-j',l/mm'}(S/M_{m,m'}).\]

If some $M_{m,m'}$ is neither dominant nor purely nondominant, then $M_{m,m'}=(v'_1,\ldots,v'_{p_1})$ (where $v'_i=\dfrac{\lcm(m',v_i)}{m'}$), and the number $p_2$ of nondominant generators in its minimal generating set is less than $p_1$ (because  $M_{m,m'}$ is minimally generated by at most $p_1$ monomials). In particular, $p_2\leq p_1-1\leq p-2$. Suppose that, after applying Theorem \ref{1SD} $r$ times, we obtain a decomposition 
 $\betti_{k,l}(S/M)=\sum\limits_{(j_1,l_1)} \cdots \sum\limits_{(j_r,l_r)} \betti_{k-j_1-\ldots -j_r,l/l_1\ldots l_r}(S/M_{l_1,\ldots,l_r})$, such that if some $M_{l_1,\ldots,l_r}$ is neither dominant nor purely nondominant, then 
 $M_{l_1,\ldots,l_r}=(w'_1,\ldots,w'_{p_{r-1}})$, with $p_{r-1}\leq p-(r-1)$. \\
 If some $M_{l_1,\ldots,l_r}$ is neither dominant nor purely nondominant, then the number $p_r$ of nondominant generators in its minimal generating set is less than $p_{r-1}$. In particular, 
 $p_r\leq p_{r-1}-1 \leq p-r$. Therefore, after applying Theorem \ref{1SD} $p$ times, we obtain a decomposition
 \[\betti_{k,l}(S/M)=\sum\limits_{(j_1,l_1)}\cdots \sum\limits_{(j_p,l_p)} \betti_{k-j_1-\ldots-j_p,l/l_1\ldots l_p}(S/M_{l_1,\ldots,l_p}).\]
 If we assume that there is an ideal $M_{l_1,\ldots,l_p}$ which is neither dominant nor purely nondominant, then $M_{l_1,\ldots,l_p}=(z'_1,\ldots,z'_{p_{p-1}})$, with $p_{p-1}\leq p-(p-1)=1$.\\
 But this scenario is not possible, for the minimal generating set of such an ideal must contain at least one dominant generator and at least one nondominant generator. \\
 We conclude that each $M_{l_1,\ldots,l_p}$ is either dominant or purely nondominant. 
\end{proof}

\begin{definition}%4.5
The equation
\[\betti_{k,l}(S/M)=\sum\limits_{D\in \mathscr{D}} \betti_{k-j_D,l/m_D}(S/D)+ \sum\limits_{N\in \mathscr{N}} \betti _{k-j_N,l/m_N}(S/N)\]
constructed in the proof of Theorem \ref{2SD} will be called \textbf{second structural decomposition} of $M$. The sum $\sum\limits_{D\in \mathscr{D}} \betti_{k-j_D,l/m_D}(S/D)$ will be called \textbf{dominant part} of the second structural decomposition, and the sum $\sum\limits_{N\in \mathscr{N}} \betti _{k-j_N,l/m_N}(S/N)$ will be called \textbf{purely nondominant part} of the second structural decomposition.
\end{definition}

\textit{Note}:
Although Theorem \ref{2SD} states the existence of a decomposition of the form
\[\betti_{k,l}(S/M)=\sum\limits_{D\in \mathscr{D}} \betti_{k-j_D,l/m_D}(S/D) + \sum\limits_{N\in \mathscr{N}} \betti_{k-j_N,l/m_N}(S/N),\]
the  proof of Theorem \ref{2SD} is constructive. In fact, we show that 
\[\betti_{k,l}(S/M)=\sum\limits_{(j_1,l_1)}\cdots \sum\limits_{(j_p,l_p)} \betti_{k-j_1-\ldots-j_p,l/l_1\ldots l_p}(S/M_{l_1,\ldots,l_p}),\]
where the ideals $M_{l_1,\ldots,l_p}$ are either dominant or purely nondominant, and they determine the dominant and purely nondominant part of the second structural decomposition.\\

Recall that if $D$ is a dominant ideal, its minimal resolution is given by $\mathbb{T}_D$ [Al, Theorem 4.4]. Therefore, when the second structural decomposition of $M$ has no purely nondominant part, we can immediately compute the multigraded Betti numbers $\betti_{k,l}(S/M)$. Such is the case in the next example.

\begin{example}\label{example 2SD}%4.6
In Example \ref{example 1} we introduced the ideal $M=(a^3b^2,c^3d,ac^2,a^2c,b^2d,abc,bcd)$ and, in Example \ref{example 1SD} we gave its first structural decomposition. We would like to read off the Betti numbers of $S/M$ from the Betti numbers of the three ideals on the right side of that decomposition. The first two of these ideals, namely $M_{a^3b^2}=(c,d)$ and $M_{c^3d}=(a,b)$, are dominant. Hence, their minimal  resolutions are $\mathbb{T}_{M_{a^3b^2}}$ and $\mathbb{T}_{M_{c^3d}}$, respectively. However, the third ideal, $M_1=(ac^2,a^2c,b^2d,abc,bcd)$, is not dominant. In order to obtain the multigraded Betti numbers of $S/M_1$ we compute the first structural decomposition of $M_1$ (we leave the details to the reader):\\
\begin{align*}
\betti_{k,l}\left(\dfrac{S}{M_1}\right)&=
\betti_{k-2,l/a^2c^2}\left(\dfrac{S}{(b)}\right)+\betti_{k-1,l/ac^2}\left(\dfrac{S}{(b)}\right)+ \betti_{k-1,l/a^2c}\left(\dfrac{S}{(b)}\right)
+ \betti_{k-1,l/b^2d}\left(\dfrac{S}{(c)}\right) \\
& +\betti_{k,l}\left(\dfrac{S}{(abc,bcd)}\right).
\end{align*}

Now, if we combine this equation with the first structural decomposition of $M$, given in Example \ref{example 1SD}, we obtain
\begin{align*}
\betti_{k,l}(S/M) & =
\betti_{k-1,l/a^3b^2}\left(\dfrac{S}{(c,d)}\right)+\betti_{k-1,l/c^3d}\left(\dfrac{S}{(a,b)}\right)+\betti_{k-2,l/a^2c^2}\left(\dfrac{S}{(b)}\right)\\
&+
\betti_{k-1,l/ac^2}\left(\dfrac{S}{(b)}\right) 
 + \betti_{k-1,l/a^2c}\left(\dfrac{S}{(b)}\right) 
 + \betti_{k-1,l/b^2d}\left(\dfrac{S}{(c)}\right) + \betti_{k,l}\left(\dfrac{S}{(abc,bcd)}\right).
\end{align*}

Note that this is the second structural decomposition of $M$, for each ideal on the right side of this decomposition is dominant. In order to compute $\betti_{k,l}(S/M)$, it would be unwise to choose integers $k$ and monomials $l$ at random. We might take many guesses and still not find any nonzero multigraded Betti numbers. The right way to compute $\betti_{k,l}(S/M)$ is by first computing the minimal resolutions of the dominant ideals on the right side of the decomposition, which we do next.
\begin{itemize}
\item The multigraded Betti numbers of $S/(c,d)$ are \\
 $\betti_{0,1}(S/(c,d))=\betti_{1,c}(S/(c,d))=\betti_{1,d}(S/(c,d))=\betti_{2,cd}(S/(c,d))=1$. \\ 
 Therefore, $\betti_{k-1,l/a^3b^2}\left(\dfrac{S}{(c,d)}\right)=1$ when $(k-1,l/a^3b^2)$ equals one of $(0,1)$, $(1,c)$, $(1,d)$, $(2,cd)$; that is, when $(k,l)$ equals one of $(1,a^3b^2)$, $(2,a^3b^2c)$, $(2, a^3b^2d)$, $(3,a^3b^2cd)$.
\item The multigraded Betti numbers of $S/(a,b)$ are \\
 $\betti_{0,1}(S/(a,b))=\betti_{1,a}(S/(a,b))=\betti_{1,b}(S/(a,b))=\betti_{2,ab}(S/(a,b))=1$. \\ 
 Therefore, $\betti_{k-1,l/c^3d}\left(\dfrac{S}{(a,b)}\right)=1$ when $(k-1,l/c^3d)$ equals one of $(0,1)$, $(1,a)$, $(1,b)$, $(2,ab)$; that is, when $(k,l)$ equals one of $(1,c^3d)$, $(2,ac^3d)$, $(2, bc^3d)$, $(3,abc^3d)$.
\item  The multigraded Betti numbers of $S/(b)$ are
 $\betti_{0,1}(S/(b))=\betti_{1,b}(S/(b))=1$. \\ 
 Therefore, $\betti_{k-2,l/a^2c^2}\left(\dfrac{S}{(b)}\right)=1$, or $\betti_{k-1,l/ac^2}\left(\dfrac{S}{(b)}\right)=1$, or $\betti_{k-1,l/a^2c}\left(\dfrac{S}{(b)}\right)=1$, when $(k-2,l/a^2c^2)$ equals one of $(0,1)$, $(1,b)$, or when $(k-1,l/ac^2)$ equals one of $(0,1)$, $(1,b)$, or when $(k-1,l/a^2c)$ equals one of $(0,1)$, $(1,b)$; that is, when $(k,l)$ equals one of $(2,a^2c^2)$, $(3,a^2bc^2)$, $(1, ac^2)$, $(2,abc^2)$, $(1, a^2c)$, $(2,a^2bc)$.
\item  The multigraded Betti numbers of $S/(c)$ are
 $\betti_{0,1}(S/(c))=\betti_{1,c}(S/(c))=1$. \\ 
Therefore, $\betti_{k-1,l/b^2d}\left(\dfrac{S}{(c)}\right)=1$ when $(k-1,l/b^2d)$ equals one of $(0,1)$, $(1,c)$; that is, when $(k,l)$ equals one of $(1,b^2d)$, $(2,b^2cd)$.
\item The multigraded Betti numbers of $S/(abc,bcd)$ are \\
 $\betti_{0,1}(S/(abc,bcd))=\betti_{1,abc}(S/(abc,bcd))=\betti_{1,bcd}(S/(abc,bcd))=\betti_{2,abcd}(S/(abc,bcd))=1$. \\ 
 Therefore, $\betti_{k,l}\left(\dfrac{S}{(abc,bcd)}\right)=1$ when $(k,l)$ equals one of $(0,1)$, $(1,abc)$, $(1, bcd)$, $(2,abcd)$.
\end{itemize}
Thus, the nonzero multigraded Betti numbers of $S/M$ are\\
$\betti_{3,a^3b^2cd}=\betti_{3,abc^3d}=\betti_{3,a^2bc^2}=\betti_{2,a^3b^2c}=\betti_{2,a^3b^2d}=\betti_{2,ac^3d}=\betti_{2,bc^3d}=\betti_{2,a^2c^2}=\betti_{2,abc^2}=\betti_{2,a^2bc}=\betti_{2,b^2cd}=\betti_{2,abcd}=\betti_{1,a^3b^2}=\betti_{1,c^3d}=\betti_{1,ac^2}=\betti_{1,a^2c}=\betti_{1,b^2d}=\betti_{1,abc}=\betti_{1,bcd}=\betti_{0,1}=1.$
 \end{example}
 
\begin{definition}%4.7
Recall that $G=\{m_1,\ldots,m_d,h_1,\ldots,h_c\}$ is the minimal generating set of $M$. If $d=1$, the equation 
\[\betti_{k,l} (S/M)= \sum\limits_{(j,m)\in C} \betti_{k-j,l/m}(S/M_m),\]
given by Theorem \ref{1SD}, will be called the \textbf{third structural decomposition} of $M$.
\end{definition}

Note that when $d=1$, the right hand side of the equation above has only two terms. The third strutural decomposition will be instrumental in the proof of Charalambous theorem, in Section 6.

\section{Decompositions without purely nondominant part}% 5 (ex 6)

When the second structural decomposition of $M$ has no purely nondominant part, the numbers $\betti_{k,l} (S/M)$ can be easily computed, as illustrated in Example \ref{example 2SD}. In this section, however, our aim is to compute Betti numbers of classes of ideals rather than single ideals. More specifically, we will introduce two families of ideals whose decompositions have no purely nondominant part, and will give their multigraded Betti numbers explicitly.

\begin{definition} %5.1
Let $L$ be the set of all monomials $l$ such that the number of basis elements of $\mathbb{T}_M$, with multidegree $l$ is odd.  
\begin{enumerate}[(i)]
\item We say that $M$ has \textbf{characteristic Betti numbers}, if for each monomial $l$ 
\begin{equation*}
\sum\limits_k \betti_{k,l}(S/M)= 
 \begin{cases}
							1 \text{ if } l \in L, \\ 
							0 \text{ otherwise.}  
								\end{cases}
\end{equation*}								
\item For each $l \in L$, let
\[f(l)=\min\{\hdeg[\sigma]:[\sigma]\in \mathbb{T}_M \text{ and }	\mdeg[\sigma]=l\}.\]	
We say that $M$ has \textbf{characteristic Betti numbers in minimal homological degrees}, if 	
\begin{equation*}
\betti_{k,l}(S/M)=  \begin{cases} 
							1 \text{ if } l \in L \text{ and } k=f(l),\\ 
							0 \text{ otherwise.}  
							\end{cases}
\end{equation*}
																
\end{enumerate}
\end{definition}

\begin{lemma} \label{lemma} % 5.2
Let \[\betti_{k,l}(S/M)=\sum\limits_{(j,m)} \betti_{k-j,l/m}(S/M_m)\] be the first structural decomposition of $M$. Then, the second structural decomposition of $M$ has no purely nondominant part if and only if the second structural decomposition of each $M_m$ has no purely nondominant part.
\end{lemma}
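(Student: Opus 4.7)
The plan is to exploit the recursive structure of the second structural decomposition (SSD) that is implicit in the proof of Theorem \ref{2SD}. Unwinding the iterative process described there, one sees that the SSD of $M$ is obtained in two stages: first, apply Theorem \ref{1SD} once to $M$ to obtain
\[
\betti_{k,l}(S/M) = \sum_{(j,m)\in C} \betti_{k-j,l/m}(S/M_m);
\]
then, for each $(j,m)\in C$, replace the summand $\betti_{k-j,l/m}(S/M_m)$ by the full SSD of $M_m$ (shifted by $j$ in homological degree and by $m$ in multidegree), adopting the convention that the SSD of a dominant ideal consists of a single dominant summand and the SSD of a purely nondominant ideal consists of a single purely nondominant summand.

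My first step is to record precisely this recursive identification: under the convention above, the purely nondominant part of the SSD of $M$ decomposes, without cancellation, as the sum over $(j,m)\in C$ of the (shifted) purely nondominant parts of the SSDs of the ideals $M_m$. This follows directly from the construction in the proof of Theorem \ref{2SD}, where one iterates the first structural decomposition on every intermediate ideal that is neither dominant nor purely nondominant; applying one step at the top and then continuing on each $M_m$ produces the same final decomposition, and the classification of the terminal ideals into the dominant or purely nondominant part is compatible with this reorganization.

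Once this identification is in place, both directions of the equivalence reduce to bookkeeping. For the forward direction, if the purely nondominant part of the SSD of $M$ is empty, then each of its subsums coming from an individual $M_m$ must be empty, so the SSD of each $M_m$ has no purely nondominant part; in particular, no $M_m$ is itself purely nondominant. For the backward direction, if the SSD of each $M_m$ has empty purely nondominant part, then the purely nondominant part of the SSD of $M$ is a sum of empty subsums, hence empty.

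The main obstacle is Step 1: cleanly articulating that the iterative construction of the SSD factors through a single application of Theorem \ref{1SD} at $M$, and verifying that no summand produced at a later stage of the iteration can migrate between the dominant and purely nondominant parts of the final decomposition. Once that structural statement is established, the equivalence asserted in the lemma is immediate.
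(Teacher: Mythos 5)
Your proposal is correct and follows essentially the same route as the paper: the paper's proof likewise substitutes the second structural decomposition of each $M_m$ into the first structural decomposition of $M$, identifies the resulting expression as the second structural decomposition of $M$, and reads off the equivalence from the fact that the purely nondominant part of the latter is exactly the (shifted) union of the purely nondominant parts of the former, with no cancellation possible.
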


\begin{proof}
Let
\[\betti_{k,l}(S/M_m)=\sum\limits_{D\in \mathscr{D}} \betti_{k-j_D,l/m_D}(S/D) + \sum\limits_{N\in \mathscr{N}} \betti_{k-j_N,l/m_N}(S/N),\]
be the second structural decomposition of $M_m$. Then\\
$\betti_{k,l}(S/M)= \sum\limits_{(j,m)} \betti_{k-j,l/m}(S/M_m)=$
\[\sum\limits_{(j,m)}\left( \sum\limits_{D\in \mathscr{D}_m}\betti_{k-j-j_D,l/mm_D}(S/D)+ \sum\limits_{N\in\mathscr{N}_m} \betti_{k-j-j_N,l/mm_N}(S/N) \right)\]
is the second structural decomposition of $M$. 
\end{proof}

\begin{theorem}\label{char Betti numbers} %5.3
If the second structural decomposition of $M$ has no purely nondominant part, then $M$ has characteristic Betti numbers.
\end{theorem}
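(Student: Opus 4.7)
My plan is to induct on $p$, the number of nondominant generators in the minimal generating set of $M$.

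For the base case $p = 0$, $M$ is dominant, so by [Al, Theorem 4.4] its minimal resolution coincides with $\mathbb{T}_M$. Since distinct subsets of a dominant generating set have distinct lcms (each generator has a unique dominating variable), distinct Taylor symbols of $\mathbb{T}_M$ carry distinct multidegrees. Therefore $L$ equals the set of multidegrees occurring in $\mathbb{T}_M$, each realized by a single basis element; so $\sum_k \betti_{k,l}(S/M)=1$ for $l \in L$ and $0$ otherwise.

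For the inductive step $p \geq 1$, note first that $q \geq 1$: if $M$ were purely nondominant, the second structural decomposition would consist of $M$ itself, contradicting the hypothesis. Hence the first structural decomposition $\betti_{k,l}(S/M) = \sum_{(j,m)\in C} \betti_{k-j,l/m}(S/M_m)$ is available. By Lemma \ref{lemma}, each $M_m$ again has second structural decomposition with no purely nondominant part; in particular no $M_m$ is itself purely nondominant, and combined with the counting bound from the proof of Theorem \ref{2SD} this yields $p(M_m) \leq p-1$. By induction each $M_m$ has characteristic Betti numbers.

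Summing the first structural decomposition over $k$ and using the inductive hypothesis, $\sum_k \betti_{k,l}(S/M)$ equals the number of pairs $(j,m) \in C$ for which $l/m$ belongs to the analogue of $L$ for $M_m$. The crux is then a uniqueness argument: for any monomial $l$ there is at most one pair $(j,m) \in C$ such that $l/m$ occurs in $\mathbb{T}_{M_m}$, and for this pair the number of basis elements of $\mathbb{T}_{M_m}$ with multidegree $l/m$ equals the number of basis elements of $\mathbb{T}_M$ with multidegree $l$. Both facts follow from Proposition \ref{1} together with [Al, Lemma 4.3]: any basis element $[h'_{s_1},\ldots,h'_{s_i}]$ of $\mathbb{T}_{M_m}$ with multidegree $l/m$ lifts via Proposition \ref{1} to $[m_{r_1},\ldots,m_{r_j},h_{s_1},\ldots,h_{s_i}]\in\mathbb{T}_M$ of multidegree $l$ whose dominants have lcm $m$, and by [Al, Lemma 4.3] all basis elements of $\mathbb{T}_M$ with multidegree $l$ share the same dominants, pinning down $(j,m)$; Proposition \ref{1} then supplies the matching bijection of counts.

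Granting this, the sum collapses to a single indicator, and the equivalence $l \in L$ if and only if $l/m$ lies in the corresponding $L$ for $M_m$ (for the unique admissible pair) yields the claim. The main obstacle is precisely this combinatorial collapse: the oddness condition in the definition of $L$ is otherwise fragile under summation, and only the uniqueness argument forces at most one nonzero term, so that parity passes cleanly from $M_m$ to $M$.
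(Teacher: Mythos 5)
Your skeleton matches the paper's: induction, reduction of each $M_m$ via Lemma \ref{lemma}, and the uniqueness argument through Proposition \ref{1} and [Al, Lemma 4.3] showing that at most one pair $(j,m)\in C$ can contribute to $\sum_k\betti_{k,l}(S/M)$ (your choice of induction variable, the number of nondominant generators rather than $\#G$, is harmless). The genuine gap is in your final step. The claim that ``the number of basis elements of $\mathbb{T}_{M_m}$ with multidegree $l/m$ equals the number of basis elements of $\mathbb{T}_M$ with multidegree $l$'' is false if $\mathbb{T}_{M_m}$ means, as your argument requires, the Taylor complex on the \emph{minimal} generating set of $M_m$: your inductive hypothesis (``$M_m$ has characteristic Betti numbers'') concerns the set $L_{M_m}$ defined from that minimal complex. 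What Proposition \ref{1} and Theorem \ref{3} actually provide is a bijection with the Taylor complex $\mathbb{T}_{h'_1,\ldots,h'_p}$ on the possibly redundant generators $h'_i=\lcm(m,n_i)/m$, and that complex can differ substantially from the minimal one. In the paper's running example, with $M=(a^3b^2,c^3d,ac^2,a^2c,b^2d,abc,bcd)$, $m=a^3b^2$, $l=a^3b^2cd$, the complex $\mathbb{T}_M$ has $11$ basis elements of multidegree $l$ (Example \ref{example 3}), while the minimal Taylor complex of $M_{a^3b^2}=(c,d)$ has exactly one basis element of multidegree $cd$; so the asserted equality of counts fails, only the parities agree, and that parity agreement does not follow from Proposition \ref{1} together with [Al, Lemma 4.3] alone.

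The missing ingredient is the cancellation--parity observation: any multigraded free resolution with a monomial-labelled basis (in particular either Taylor complex of $S/M_m$, or $\mathbb{T}_M$ itself) is trimmed to a minimal resolution by consecutive cancellations, each deleting two basis elements of equal multidegree, so the number of basis elements in a fixed multidegree changes only by even amounts. The paper sidesteps your ``$l\in L$ iff $l/m\in L_{M_m}$'' bridge entirely: it uses the uniqueness argument only to conclude $\sum_k\betti_{k,l}(S/M)\le 1$ for every $l$, and then applies the cancellation--parity observation once, to $M$, to force the value $1$ exactly when $l\in L$. You can repair your version by invoking the same observation to transfer parity from $\mathbb{T}_{h'_1,\ldots,h'_p}$ to the minimal Taylor complex of $M_m$ (both resolve $S/M_m$), but as written the step ``Proposition \ref{1} then supplies the matching bijection of counts'' is incorrect and leaves the proof incomplete.
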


\begin{proof}
The proof is by induction on the cardinality of the minimal generating set $G$ of $M$.\\
If $\#G=1$ or $\#G=2$, then $M$ is dominant and, by [Al, Corollary 4.5], $M$ is Scarf. Now, Scarf ideals have characteristic Betti numbers.\\
Suppose now that the theorem holds for ideals with minimal generating sets of cardinality $\leq q-1$. \\
Let us assume that $\#G=q$. By hypothesis, the second structural decomposition of $M$ has no purely nondominant part, which implies that $M$ itself is not purely nondominant. Therefore, $G$ must be of the form $G=\{m_1,\ldots,m_s,n_1,\ldots,n_t\}$, where $m_1,\ldots,m_s$ are dominant, $n_1,\ldots,n_t$ are nondominant, $s>0$, and $s+t=q$. In particular, $t\leq q-1$. Now, the first structural decomposition of $M$ is
\[\betti_{k,l} (S/M)= \sum\limits_{(j,m)\in C} \betti_{k-j,l/m}(S/M_m),\]
where each $M_m$ is minimally generated by at most $q-1$ monomials.
Then,
\[\sum \limits_k \betti_{k,l}(S/M)=\sum\limits_k \sum\limits_{(j,m)} \betti_{k-j,l/m} (S/M_m) = \sum\limits_{(j,m)} \sum\limits_k \betti_{k-j,l/m} (S/M_m).\]
Suppose that, for some monomial $l$, $\sum\limits_k\betti_{k,l}(S/M) \neq 0$. Then, there must be a pair $(j',m') \in C$
such that $\sum\limits_k  \betti_{k-j',l/m'} (S/M_{m'}) \neq 0$. By Lemma \ref{lemma}, the second structural decomposition of $M_{m'}$ has no purely nondominant part and, by induction hypothesis, 
$M_{m'}$ has characteristic Betti numbers. Hence, $\sum\limits_k  \betti_{k-j',l/m'} (S/M_{m'}) =1$.\\
Suppose, by means of contradiction, that there is a pair $(j,m)\in C \setminus \{(j',m')\}$ such that $\sum\limits_k  \betti_{k-j,l/m} (S/M_m) \neq 0$. Then, there exist basis elements $[\sigma] \in \mathbb{T}_{M_{m'}}$ and $[\tau] \in \mathbb{T}_{M_m}$, such that $\mdeg[\sigma]=l/m'$, and $\mdeg[\tau]=l/m$. Recall that $[\sigma]$, $[\tau]$, $m'$, and $m$ are of the form $[\sigma]=[n'_{a_1},\ldots,n'_{a_c}]$; $[\tau]=[n'_{b_1},\ldots,n'_{b_d}]$; $m'=\lcm(m_{u_1},\ldots,m_{u_{j'}})$; $m=\lcm(m_{v_1},\ldots,m_{v_j})$. By Proposition \ref{1} we have that 
\[l=m'l/m'=m'\mdeg[\sigma]=m'\mdeg[n'_{a_1},\ldots,n'_{a_c}]=\mdeg[m_{u_1},\ldots,m_{u_{j'}},n_{a_1},\ldots,n_{a_c}].\]
Similarly, 
\[l=ml/m=m\mdeg[\tau]=m\mdeg[n'_{b_1},\ldots,n'_{b_d}]= \mdeg[m_{v_1},\ldots,m_{v_j},n_{b_1},\ldots,n_{b_d}].\]
 Hence, 
$\mdeg[m_{u_1},\ldots,m_{u_{j'}},n_{a_1},\ldots,n_{a_c}] $ and $\mdeg[m_{v_1},\ldots,m_{v_j},n_{b_1},\ldots,n_{b_d}]$ are two basis elements of $\mathbb{T}_M$, with the same multidegree $l$. By [Al, Lemma 4.3], these basis elements must contain the same dominant generators. However, since $(j',m')\neq(j,m)$, we must have that $\{m_{u_1},\ldots,m_{u_j'}\} \neq \{m_{v_1},\ldots,m_{v_j}\}$, a contradiction. Therefore, 
$\sum\limits_k  \betti_{k-j,l/m} (S/M_m)=0$, for all $(j,m)\in C \setminus\{(j',m')\}$. Thus,
\[\sum\limits_k \betti_{k,l}(S/M)= \sum\limits_{(j,m)} \sum\limits_k \betti_{k-j,l/m} (S/M_m)= \sum\limits_k \betti_{k-j',l/m'} (S/M_{m'})=1.\]
We have proven that $\sum\limits_k \betti_{k,l}(S/M)\leq 1$, for each monomial $l$.\\
Since a minimal resolution $\mathbb{F} $ of $S/M$ can be obtained from $\mathbb{T}_M$ by making series of consecutive cancellations, and given that each consecutive cancellation involves a pair of basis elements of equal multidegree, the number of basis elements of $\mathbb{T}_M$ with a given multidegree $l$ is even if and only if the number of basis elements of $\mathbb{F}$ with multidegree $l$ is even. But the number of basis elements of $\mathbb{F}$ with multidegree $l$ is $\sum\limits_k \betti_{k,l}(S/M) \leq 1$, which proves the theorem.
\end{proof}

In Example \ref{example 2SD}, we computed the second structural decomposition of the ideal $M=(a^3b^2,c^3d,ac^2,a^2c,b^2d,abc,bcd)$, and noticed that it has no purely nondominant part. Right after, we found the numbers $\betti_{k,l}(S/M)$ and proved that, with the language of this section, $M$ has characteristic Betti numbers. This is consistent with Theorem \ref{char Betti numbers}. 

\begin{lemma}\label{hom degree} %5.4
Let $\betti_{k,l}(S/M)=\sum\limits_{(j,m)} \betti_{k-j,l/m} (S/M_m)$ be the first structural decomposition of $M$. Suppose that $M$ has characteristic Betti numbers, and each $M_m$ has characteristic Betti numbers in minimal homological degrees. Then $M$ has characteristic Betti numbers in minimal homological degrees. 
\end{lemma}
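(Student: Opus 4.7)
The plan is to pin down, for each monomial $l$, the exact homological degree at which $\betti_{k,l}(S/M)$ can be nonzero. If $l\notin L$, then the characteristic-Betti-numbers hypothesis gives $\sum_k \betti_{k,l}(S/M)=0$, so every $\betti_{k,l}(S/M)=0$ and the conclusion is immediate. So fix $l\in L$. Because $M$ has characteristic Betti numbers, there is exactly one integer $k^{\ast}$ with $\betti_{k^{\ast},l}(S/M)=1$; the whole task is to show $k^{\ast}=f(l)$.

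First I would identify a distinguished pair $(j',m')\in C$. Pick any basis element $[\sigma]$ of $\mathbb{T}_M$ of multidegree $l$, written $[\sigma]=[m_{r_1},\ldots,m_{r_{j'}},n_{s_1},\ldots,n_{s_i}]$, and set $m':=\lcm(m_{r_1},\ldots,m_{r_{j'}})$. By [Al, Lemma 4.3], every basis element of $\mathbb{T}_M$ of multidegree $l$ contains the same dominant generators $m_{r_1},\ldots,m_{r_{j'}}$. Mimicking the uniqueness step in the proof of Theorem \ref{char Betti numbers}, Proposition \ref{1} combined with [Al, Lemma 4.3] forces $\betti_{k-j,l/m}(S/M_m)=0$ for every $(j,m)\in C\setminus\{(j',m')\}$ and every $k$, because a nonzero contribution would produce (via Proposition \ref{1}) a basis element of $\mathbb{T}_M$ of multidegree $l$ containing the wrong dominant monomials. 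The first structural decomposition therefore collapses to
\[
\betti_{k,l}(S/M)=\betti_{k-j',l/m'}(S/M_{m'}) \quad \text{for all } k.
\]

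Next, since $M_{m'}$ has characteristic Betti numbers in minimal homological degrees, the unique $k$ for which $\betti_{k-j',l/m'}(S/M_{m'})=1$ is $k=j'+f'$, where $f'$ denotes the minimum homological degree among basis elements of $\mathbb{T}_{M_{m'}}$ of multidegree $l/m'$. Hence $k^{\ast}=j'+f'$.

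Finally, I invoke Theorem \ref{3}(ii): it gives a bijection between the basis elements of $\mathbb{T}_{M_{m'}}$ of multidegree $l/m'$ and the basis elements of $\mathbb{T}_M$ of multidegree $l$, shifting homological degree by $j'$. Because all basis elements of $\mathbb{T}_M$ of multidegree $l$ contain the same dominant generators encoded by $m'$, this bijection captures every such element, and taking the minimum homological degree on both sides yields $f(l)=j'+f'=k^{\ast}$, as required. The only delicate point is the uniqueness of $(j',m')$, but this reuses the argument already established in Theorem \ref{char Betti numbers} and introduces no new difficulty.
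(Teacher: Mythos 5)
Your strategy is the same as the paper's: use [Al, Lemma 4.3] to show that only one pair $(j',m')$ can contribute, so the first structural decomposition collapses to $\betti_{k,l}(S/M)=\betti_{k-j',l/m'}(S/M_{m'})$, and then transfer the minimal-homological-degree property from $M_{m'}$ to $M$ through the correspondence of Proposition \ref{1}. The collapse and the identification $k^{\ast}=j'+f'$ are fine. The gap is in your last step. Theorem \ref{3}(ii) is a statement about $\mathbb{T}_{h'_1,\ldots,h'_c}$, the Taylor complex on the full list $h'_1,\ldots,h'_c$, which is in general a redundant (possibly repetitive) generating list of $M_{m'}$; it is not a statement about $\mathbb{T}_{M_{m'}}$, the Taylor complex on the minimal generating set, and the latter is the complex used to define $L$ and $f$ in the hypothesis on $M_{m'}$. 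What Theorem \ref{3}(ii) actually yields is $f(l)=j'+g'$, where $g'$ is the least homological degree of a basis element of $\mathbb{T}_{h'_1,\ldots,h'_c}$ of multidegree $l/m'$, and $g'$ can be strictly smaller than your $f'$: in Example \ref{example 1}, $M_{a^3b^2}$ is generated by the list $c^2,c,d,c,cd$ but minimally by $c,d$, and for the multidegree $cd$ the redundant complex contains $[cd]$ in homological degree $1$ while the only element of that multidegree in $\mathbb{T}_{(c,d)}$ is $[c,d]$, in degree $2$. So the bijection you invoke ``between the basis elements of $\mathbb{T}_{M_{m'}}$ of multidegree $l/m'$ and the basis elements of $\mathbb{T}_M$ of multidegree $l$'' is not what Theorem \ref{3}(ii) provides, and the equality $f(l)=j'+f'$ does not follow.

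Nor is this a merely formal citation issue: the inequality you need, namely that every Taylor symbol $[m_{r_1},\ldots,m_{r_{j'}},n_{s_1},\ldots,n_{s_i}]$ of $\mathbb{T}_M$ of multidegree $l$ has $i\geq f'$, can genuinely fail, because the monomials $n'_{s_1},\ldots,n'_{s_i}$ need not be (distinct) minimal generators of $M_{m'}$. The paper's running example shows the danger concretely: for $M=(a^3b^2,c^3d,ac^2,a^2c,b^2d,abc,bcd)$, $m'=a^3b^2$, $j'=1$, $l=a^3b^2cd$, one has $f'=2$ but the Taylor symbol $[a^3b^2,bcd]$ has multidegree $l$ and homological degree $2$, so $f(l)=2\neq j'+f'=3$; and indeed the unique nonzero Betti number in that multidegree is $\betti_{3,a^3b^2cd}(S/M)=1$ (Example \ref{example 2SD}). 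Here $cd$ is a redundant generator of $M_{a^3b^2}=(c,d)$. Be aware that the paper's own proof has the same soft spot (it treats $[n'_{c_1},\ldots,n'_{c_d}]$ as a basis element of $\mathbb{T}_{M_m}$, which it need not be), so your argument is no less rigorous than the printed one; but to close the gap one must either read the hypothesis on the $M_m$ with respect to the generating lists $n'_1,\ldots,n'_p$ (in which case your use of Theorem \ref{3}(ii) becomes correct), or add an argument excluding low-degree symbols of $\mathbb{T}_M$ arising from redundant $n'_s$, since as stated the final equality is exactly where the proof breaks.
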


\begin{proof}
Let $l$ be a monomial that is the common multidegree of an odd number of basis elements of $\mathbb{T}_M$. Let $r$ be such that $\betti_{r,l}(S/M)=1$. Then, there is a pair $(j,m)\in C$ such that $\betti_{r-j,l/m}(S/M_m)=1$. It follows that there is a basis element $[n'_{a_1},\ldots,n'_{a_{r-j}}]$ of $\mathbb{T}_{M_m}$ with multidegree $l/m$.
Recall that $m$ is of the form $m=\lcm(m_{b_1},\ldots,m_{b_j})$, with $m_{b_1},\ldots,m_{b_j} \in \{m_1,\ldots,m_s\}$. By Proposition \ref{1}, $l=m \dfrac{l}{m}=m \mdeg[n'_{a_1},\ldots,n'_{a_{r-j}}]=\mdeg[m_{b_1},\ldots,m_{b_j},n_{a_1},\ldots,n_{a_{r-j}}]$. Suppose that $[\sigma]$ is a basis element of $\mathbb{T}_M$, such that $\mdeg[\sigma]=l$. We will show that $r \leq \hdeg[\sigma]$. By [Al, Lemma 4.3], $[\sigma]$ must be of the form 
$[\sigma]=[m_{b_1},\ldots,m_{b_j},n_{c_1},\ldots,n_{c_d}]$. By Proposition \ref{1}, $\l=\mdeg[\sigma]=m \mdeg[n'_{c_1}\ldots,n'_{c_d}]$. Thus, the basis element $[n'_{c_1},\ldots,n'_{c_d}]$ of $\mathbb{T}_{M_m}$ has multidegree $l/m$. Since $M_m$ has Betti numbers in minimal homological degrees, $\hdeg[n'_{a_1},\ldots,n'_{a_{r-j}}]\leq \hdeg[n'_{c_1},\ldots,n'_{c_d}]$. It follows that $r=\hdeg[m_{b_1},\ldots,m_{b_j},n_{a_1},\ldots,n_{a_{r-j}}] \leq  \hdeg[m_{b_1},\ldots,m_{b_j},n_{c_1},\ldots,n_{c_d}] = \hdeg[\sigma]$, which proves that $M$ itself has characteristic Betti numbers in minimal homological degrees.
\end{proof}

\begin{definition} %5.5
Suppose that the polynomial ring $S$ has $n$ variables $x_1,\ldots,x_n$. We will say that $M$ is \textbf{almost generic}, if there is an index $i$ such that no variable among 
$x_1,\ldots, x_{i-1}, x_{i+1}, \ldots,x_n$ appears with the same nonzero exponent in the factorization of two minimal generators of $M$.
\end{definition}

\begin{example} %5.6
$M=(a^2b^2cd^2,a^3b^3c,cd^4)$ is almost generic because no variable among $a,b,d$ appears with the same nonzero exponent in the factorization of two minimal generators of $M$.
\end{example}

\begin{lemma}\label {Lemma 5.7} %5.7
Let $\betti_{k,l}(S/M)=\sum\limits_{(j,m)} \betti_{k-j,l/m} (S/M_m)$ be the first structural decomposition of $M$. Suppose that $M$ is almost generic. Then, each $M_m$ is almost generic.
\end{lemma}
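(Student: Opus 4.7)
The plan is to show that the very same index $i$ that witnesses the almost genericity of $M$ also witnesses the almost genericity of each $M_m$. Recall that $M_m=(h'_1,\ldots,h'_c)$, where $h'_s=\lcm(m,h_s)/m$, and that $\{h_1,\ldots,h_c\}\subseteq G$, the minimal generating set of $M$. Write $m=x_1^{\mu_1}\cdots x_n^{\mu_n}$ and $h_s=x_1^{\beta_1^{(s)}}\cdots x_n^{\beta_n^{(s)}}$. A direct computation from the formula $\lcm(m,h_s)=x_1^{\max(\mu_1,\beta_1^{(s)})}\cdots x_n^{\max(\mu_n,\beta_n^{(s)})}$ shows that the exponent of $x_k$ in $h'_s$ equals $\max(0,\beta_k^{(s)}-\mu_k)$. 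This elementary observation is the only calculation the proof really uses.

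Now argue by contradiction. Fix the index $i$ coming from the almost genericity of $M$. Suppose some index $k\neq i$ and two distinct minimal generators $h'_{s_1}$ and $h'_{s_2}$ of $M_m$ are such that $x_k$ appears with the same nonzero exponent $e>0$ in both. By the computation above,
\[
\max(0,\beta_k^{(s_1)}-\mu_k)=\max(0,\beta_k^{(s_2)}-\mu_k)=e>0,
\]
which forces $\beta_k^{(s_1)}=\beta_k^{(s_2)}=\mu_k+e>0$. Thus $x_k$ appears with the same nonzero exponent $\mu_k+e$ in both $h_{s_1}$ and $h_{s_2}$. Since $h'_{s_1}\neq h'_{s_2}$ we have $h_{s_1}\neq h_{s_2}$, so these are two distinct minimal generators of $M$, contradicting the almost genericity of $M$ at the index $i$.

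I expect no genuine obstacle in the argument; the only thing to be slightly careful about is the indexing. The set $\{h'_1,\ldots,h'_c\}$ need not generate $M_m$ minimally (as noted right after Construction~\ref{example 1} in Section~3), so one must pass to distinct representatives in the minimal generating set before invoking the almost genericity of $M$. Once that bookkeeping is made explicit, the proof reduces to the single identity $\exp_{x_k}(h'_s)=\max(0,\beta_k^{(s)}-\mu_k)$ and the trivial algebraic manipulation above.
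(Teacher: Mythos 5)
Your proof is correct and takes essentially the same approach as the paper's: both fix the index $i$ witnessing almost genericity of $M$ and argue by contradiction that if some variable $x\neq x_i$ appears with the same nonzero exponent in two generators of $M_m$, then (via the exponent identity you state as $\max(0,\beta-\mu)$ and the paper states as $u+\alpha=\max(u,v)$, forcing $v=w=u+\alpha$) the corresponding generators of $M$ share that same nonzero exponent, contradicting almost genericity of $M$. Your bookkeeping remark about the set $\{h'_1,\ldots,h'_c\}$ not generating $M_m$ minimally is the same implicit step the paper makes, since the minimal generators of $M_m$ are among the $h'_s$ and distinct $h'_{s_1}\neq h'_{s_2}$ come from distinct $h_{s_1}\neq h_{s_2}$.
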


\begin{proof}
Let $G$ be the minimal generating set of $M$. By definition, there is an index $i$ such that no variable among $x_1,\ldots,x_{i-1},x_{i+1},\ldots,x_n$ appears with the same nonzero exponent in the factorization of two generators in $G$. 
Let $G=\{m_1,\ldots,m_s,n_1,\ldots,n_t\}$, where $m_1,\ldots,m_s$ are dominant, and $n_1,\ldots,n_t$ are nondominant. Then, each $M_m$ in the first structural decomposition of $M$ is of the form $M_m=(n'_1,\ldots,n'_t)$. Suppose, by means of contradiction, that some $M_m$ is not almost generic. Then, there is a variable $x \neq x_i$ that appears with the same nonzero exponent $\alpha$ in the factorization of two generators $n'_a,n'_b \in \{n'_1,\ldots,n'_t\}$. Recall that $n'_a=\dfrac{\lcm(m,n_a)}{m}$, $n'_b=\dfrac{\lcm(m,n_b)}{m}$. Let $u,v,w$ be the exponents with which x appears in the factorizations of $m,n_a,n_b$, respectively. Note that $x$ appears with exponents $u+\alpha$ and $\max(u,v)$ in the factorizations of $mn'_a$ and $\lcm(m,n_a)$, respectively. Since $mn'_a=\lcm(m,n_a)$, we must have that $u+\alpha=\max(u,v)$. It follows that $v=u+\alpha$. Likewise, $x$ appears with exponents $u+\alpha$ and $\max(u,w)$ in the factorizations of $mn'_b$ and $\lcm(m,n_b)$, respectively. Since $mn'_b=\lcm(m,n_b)$, we must have that $u+\alpha=\max(u,w)$. It follows that $w=u+\alpha$. Combining these identities, we deduce that $v=w$, which implies that $x$ appears with the same nonzero exponent in the factorizations of $n_a$ and $n_b$, which contradicts the fact that $M$ is almost generic.
\end{proof}

\begin{lemma} \label{almost generic lemma} %5.8
If $M$ is almost generic, its second structural decomposition has no purely nondominant part.
\end{lemma}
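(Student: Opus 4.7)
The plan is to induct on the cardinality $N$ of the minimal generating set $G$ of $M$.

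For the base case $N=1$, the ideal $M$ is trivially dominant (the single generator is dominant with respect to every variable that appears in it), so its second structural decomposition reduces to $M$ itself and contains no purely nondominant part.

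For the inductive step, suppose the result holds for almost generic ideals with fewer than $N\geq 2$ minimal generators. The key combinatorial observation I would first establish is that \emph{every almost generic ideal with $N\geq 2$ minimal generators has at least one dominant generator.} Indeed, let $i$ be the exceptional index from the definition of almost generic. For each $j\neq i$, the nonzero exponents of $x_j$ among the generators of $M$ are all distinct, so if $x_j$ appears with positive exponent in any generator, the generator realizing the maximum $x_j$-exponent has this exponent strictly greater than those of all other generators, and is therefore dominant with respect to $x_j$. If $M$ were purely nondominant, no such $x_j$ could occur, forcing every minimal generator to be a pure power of $x_i$; but then the minimal generating set could contain at most one monomial, contradicting $N\geq 2$.

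Granting this, write $G=\{m_1,\ldots,m_s,n_1,\ldots,n_t\}$ with $m_1,\ldots,m_s$ dominant, $n_1,\ldots,n_t$ nondominant, $s\geq 1$, and $s+t=N$. Apply the first structural decomposition
\[\betti_{k,l}(S/M)=\sum_{(j,m)\in C}\betti_{k-j,l/m}(S/M_m).\]
Each $M_m$ is generated (not necessarily minimally) by the $t$ monomials $h'_1,\ldots,h'_t$, so its minimal generating set has cardinality at most $t\leq N-1$. By Lemma \ref{Lemma 5.7}, each $M_m$ is almost generic, so the inductive hypothesis applies and the second structural decomposition of each $M_m$ has no purely nondominant part. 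Lemma \ref{lemma} then transfers this property back to $M$, completing the induction.

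The main obstacle is the structural claim that almost genericness forces the existence of a dominant generator once $N\geq 2$; this is what fuels the induction and justifies invoking the first structural decomposition at all. Once this claim is in hand, Lemmas \ref{lemma} and \ref{Lemma 5.7} deliver the conclusion with no further work.
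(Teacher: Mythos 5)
Your proposal is correct and follows essentially the same route as the paper: induction on the number of minimal generators, using the almost generic condition to extract at least one dominant generator, then Lemma \ref{Lemma 5.7}, the inductive hypothesis applied to the ideals $M_m$, and Lemma \ref{lemma} to transfer the conclusion back to $M$. Your explicit handling of the case where some $x_j$ ($j\neq i$) divides no generator (forcing pure powers of $x_i$, impossible for two or more minimal generators) is a slightly more careful version of the paper's argument, but the approach is the same.
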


\begin{proof}
Let $G$ be the minimal generating set of $M$. The proof is by induction on the cardinality of $G$.\\
If $\#G=1$ or $\#G=2$, then $M$ is dominant and the theorem holds.\\
Suppose that the theorem holds whenever $\#G\leq q-1$. Let us now assume that $\#G=q$. 
Since $M$ is almost generic, there is an index $i$ such that no variable among $x_1,\ldots,x_{i-1},x_{i+1},$\\
$\ldots,x_n$ appears with the same nonzero exponent in the factorization of two generators in $G$. Let $j \neq i$, and let $k$ be the greatest exponent with which $x_j$ appears in the factorization of a generator in $G$. Then there is a unique element in $G$, divisible by $x_j^k$, and such an element must be dominant (in $x_j$). Hence, $G$ can be represented in the form $G= \{m_1,\ldots,m_s,n_1,\ldots,n_t\}$, where $m_1,\ldots,m_s$ are dominant, $n_1,\ldots,n_t$ are nondominant, $s+t=q$, and $s \geq 1$. In particular $t \leq q-1$. By Lemma \ref{Lemma 5.7}, each $M_m$ in the first structural decomposition of $M$ is almost generic, and since $M_m=(n'_1,\ldots,n'_t)$, $M_m$ is minimally generated by at most $q-1$ elements. Then, by induction hypothesis, the second structural decomposition of $M_m$ has no purely nondominant part. Finally, the theorem follows from Lemma \ref {lemma}.
 \end{proof}

\begin{corollary}\label{Corollary 4} %5.9
If $M$ is almost generic, it has characteristic Betti numbers.
\end{corollary}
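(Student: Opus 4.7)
The plan is essentially immediate: chain together the two preparatory results established just above. Lemma~\ref{almost generic lemma} says that if $M$ is almost generic, then its second structural decomposition has no purely nondominant part; Theorem~\ref{char Betti numbers} says that whenever the second structural decomposition of an ideal has no purely nondominant part, that ideal has characteristic Betti numbers. So the proof is a single two-step implication: almost generic $\Rightarrow$ no purely nondominant part in the second structural decomposition $\Rightarrow$ characteristic Betti numbers.

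Concretely, I would start the proof by invoking Lemma~\ref{almost generic lemma} applied to $M$ to deduce that the second structural decomposition
\[
\betti_{k,l}(S/M)=\sum\limits_{D\in \mathscr{D}} \betti_{k-j_D,l/m_D}(S/D) + \sum\limits_{N\in \mathscr{N}} \betti_{k-j_N,l/m_N}(S/N)
\]
has $\mathscr{N}=\varnothing$. Then, with this hypothesis in hand, I would directly cite Theorem~\ref{char Betti numbers} to conclude that $M$ has characteristic Betti numbers. No new computation is needed; the two lemmas were built precisely so that this corollary drops out.

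The only potential obstacle is not really in the proof but in verifying that the hypotheses of the two results match: Lemma~\ref{almost generic lemma} produces exactly the object required as input by Theorem~\ref{char Betti numbers}, and both results are stated for the same notion of ``second structural decomposition'' defined earlier in Section~4. So there is nothing to check; the corollary is a formal consequence, and the proof should be a single short paragraph.
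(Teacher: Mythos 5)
Your proposal is correct and matches the paper's own argument, which simply cites Lemma \ref{almost generic lemma} and Theorem \ref{char Betti numbers} to conclude. Nothing further is needed.
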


\begin{proof}
Immediate from Theorem \ref{char Betti numbers} and Lemma \ref{almost generic lemma}.
\end{proof}

\begin{theorem}\label{MinHomDeg} %5.10
If $M$ is almost generic, it has characteristic Betti numbers in minimal homological degrees.
\end{theorem}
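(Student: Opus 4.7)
The plan is to mirror the inductive scheme already used in Lemma \ref{almost generic lemma} and then upgrade ``characteristic Betti numbers'' to ``in minimal homological degrees'' by a direct appeal to Lemma \ref{hom degree}. Concretely, I would induct on the cardinality $q = \#G$ of the minimal generating set of $M$.

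For the base case $\#G \leq 2$, $M$ is dominant, hence Scarf by [Al, Corollary 4.5]. Thus every multidegree in $\mathbb{T}_M$ occurs on a unique basis element, no cancellations are possible, and $\mathbb{T}_M$ itself is the minimal resolution. For each $l \in L$ there is a unique basis element $[\sigma_l]$ with $\mdeg[\sigma_l] = l$, necessarily at homological degree $f(l)$, so $\betti_{k,l}(S/M) = 1$ exactly when $l \in L$ and $k = f(l)$, and $0$ otherwise.

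For the inductive step, assume the theorem holds for all almost generic ideals minimally generated by fewer than $q$ monomials, and let $\#G = q$. Exactly as in the proof of Lemma \ref{almost generic lemma}, the almost generic hypothesis forces the existence of at least one dominant generator, so $G = \{m_1,\ldots,m_s,n_1,\ldots,n_t\}$ with $s \geq 1$ and therefore $t \leq q-1$. Writing the first structural decomposition
\[
\betti_{k,l}(S/M) = \sum_{(j,m) \in C} \betti_{k-j,\, l/m}(S/M_m),
\]
Lemma \ref{Lemma 5.7} tells us each $M_m$ is almost generic, and each $M_m = (n'_1,\ldots,n'_t)$ is minimally generated by at most $t \leq q-1$ monomials. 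The inductive hypothesis then gives that every $M_m$ has characteristic Betti numbers in minimal homological degrees. Meanwhile Corollary \ref{Corollary 4} ensures that $M$ itself has characteristic Betti numbers. Applying Lemma \ref{hom degree} with this data yields the conclusion that $M$ has characteristic Betti numbers in minimal homological degrees.

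I do not expect any substantive obstacle: the three technical ingredients (the Scarf property of dominant ideals for the base case, Lemma \ref{Lemma 5.7} for the inductive shrinkage, and Lemma \ref{hom degree} for packaging the two characteristic-Betti statements together) are already in place. The most delicate bookkeeping is simply to note that $t \leq q-1$, which is what makes the induction run; this is precisely guaranteed by the forced existence of a dominant generator under the almost generic hypothesis, the same point that drives Lemma \ref{almost generic lemma}.
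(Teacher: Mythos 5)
Your proposal is correct and follows essentially the same route as the paper: induction on $\#G$, with the base case handled by dominance, the inductive step via the first structural decomposition together with Lemma \ref{Lemma 5.7} and the induction hypothesis, and the conclusion packaged by Lemma \ref{hom degree} (your explicit appeal to Corollary \ref{Corollary 4} for the ``characteristic Betti numbers'' hypothesis of that lemma is a detail the paper leaves implicit). No gaps.
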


\begin{proof}
By induction on the cardinality of the minimal generating set $G$ of $M$.\\
If $\#G=1$ or $\#G=2$, $M$ is dominant, and the Theorem holds. Let us assume that the theorem holds whenever $\#G \leq q-1$.\\
Suppose now that $\#G=q$. By Lemma \ref{almost generic lemma}, $M$ is not purely nondominant. Then, $G$ can be represented in the form $G=\{m_1,\ldots,m_s,n_1,\ldots,n_t\}$, where $m_1,\ldots,m_s$ are dominant, $n_1,\ldots,n_t$ are nondominant, $s+t=q$, and $s \geq 1$. In particular, $t \leq q-1$. Let $\betti_{k,l}(S/M)= \sum\limits_{(j,m)} \betti_{k-j,l/m} (S/M_m)$ be the first structural decomposition of $M$. Since $M_m=(n'_1,\ldots,n'_t)$, $M_m$ is minimally generated by at most $q-1$ monomials. By Lemma \ref{Lemma 5.7}, $M_m$ is almost generic. By induction hypothesis, $M_m$ has characteristic Betti numbers in minimal homological degrees. Now, the result follows from Lemma \ref{hom degree}.
\end{proof}

\begin{theorem} %5.11
If $M$ is $2$-semidominant, it has characteristic Betti numbers in minimal homological degrees. 
\end{theorem}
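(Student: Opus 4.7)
The plan is to follow the blueprint used for almost generic ideals (cf.\ Theorem \ref{MinHomDeg}): first show that the second structural decomposition of $M$ has no purely nondominant part so that Theorem \ref{char Betti numbers} applies, then invoke Lemma \ref{hom degree} to upgrade the result to minimal homological degrees. The decisive simplification here is that $2$-semidominance caps the number of nondominant generators at two, which forces every ideal in the first structural decomposition to be generated by at most two monomials.

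Concretely, I would write the minimal generating set as $G = \{m_1,\ldots,m_s,n_1,n_2\}$ with $m_1,\ldots,m_s$ dominant and $n_1,n_2$ nondominant, and form the first structural decomposition of $M$ with $d=s$. Since $c=2$, each ideal $M_m = (h'_1,h'_2)$ in the decomposition has minimal generating set of cardinality at most two. The elementary fact I would verify is that any monomial ideal whose minimal generating set has cardinality $\leq 2$ is dominant: for two incomparable monomials $a,b$ there must exist variables $x_i,x_j$ (possibly equal) with the exponent of $x_i$ in $a$ strictly greater than that in $b$, and symmetrically for $x_j$ and $b$, so both generators are dominant; the cases of size $0$ or $1$ and the degenerate cases $M_m = 0$ or $M_m = S$ are immediate. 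Hence every $M_m$ is dominant, the first structural decomposition of $M$ already coincides with its second structural decomposition, and the purely nondominant part is empty. Theorem \ref{char Betti numbers} then gives that $M$ has characteristic Betti numbers.

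To strengthen this to minimal homological degrees, observe that each dominant $M_m$ has $\mathbb{T}_{M_m}$ as its minimal resolution by [Al, Theorem 4.4]. In a dominant ideal distinct Taylor symbols carry distinct multidegrees, so every multidegree $l'$ occurring in $\mathbb{T}_{M_m}$ is attained by exactly one basis element, which sits in the unique (and hence minimal) homological degree where $l'$ appears. Thus each $M_m$ has characteristic Betti numbers in minimal homological degrees, and Lemma \ref{hom degree} applied to the first structural decomposition of $M$ completes the proof. The only non-routine step is the short combinatorial verification that any monomial ideal minimally generated by at most two elements is dominant; once that is in hand, the remainder of the argument is a direct assembly of Theorem \ref{char Betti numbers} and Lemma \ref{hom degree}.
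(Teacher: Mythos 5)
Your proposal is correct and follows essentially the same route as the paper: form the first structural decomposition, note that each $M_m=(n'_1,n'_2)$ is either $S$ or dominant (being minimally generated by at most two monomials), apply Theorem \ref{char Betti numbers}, and finish with Lemma \ref{hom degree} using that dominant ideals have characteristic Betti numbers in minimal homological degrees. Your added justifications (the dominance of two-generated ideals and the distinctness of multidegrees in the Taylor complex of a dominant ideal, via [Al, Lemma 4.3] and [Al, Theorem 4.4]) are exactly the facts the paper uses implicitly.
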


\begin{proof}
Let $G=\{m_1,\ldots,m_q,n_1,n_2\}$ be the minimal generating set of $M$, where $n_1,n_2$ are nondominant. Then, the first structural decomposition of $M$ is
\[\betti_{k,l}(S/M)= \sum\limits_{(j,m)} \betti_{k-j,l/m} (S/M_m),\]
where $M_m=(n'_1,n'_2)$.
If $n'_1=1$ or $n'_2=1$, then $M_m=S$, and $\betti_{k-j,l/m} (S/M_m)=0$. Otherwise, $(n'_1,n'_2)$ is minimally generated by either one or two monomials, which implies that 
$M_m$ is dominant. Note that the first and second structural decompositions agree, and have no purely nondominant part. By Theorem \ref{char Betti numbers}, $M$ has characteristic Betti numbers. Moreover, since each $M_m$ is dominant, it has characteristic in minimal homological degrees. Now, the result follows from Lemma \ref{hom degree}.
\end{proof}

\section{Structural decomposition and Projective Dimension}%6( ex5)

If the minimal generating set $G$ of $M$ has at least two monomials, and the ring $S$ has $n$ variables, there are two natural bounds for the projective dimension $\pd\left(S/M\right)$, namely, $2\leq \pd\left(S/M\right) \leq n$. In this section we discuss some cases where the lower and upper bounds are achieved.

Hilbert-Burch theorem [Ei, Theorem 20.15] describes the structure of the minimal resolutions of ideals $M$, when $\pd(S/M)=2$. The next theorem gives sufficient conditions for the lower bound $\pd(S/M)=2$ to be achieved.

\begin{theorem}\label{pd}
Let $M$ be either $2$- or $3$-semidominant. Suppose that there exist a minimal generator of $M$ that divides the $\lcm$ of every pair of minimal generators of $M$. Then $\pd(S/M)=2$.
\end{theorem}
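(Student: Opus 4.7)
The plan is to establish the two inequalities $\pd(S/M) \geq 2$ and $\pd(S/M) \leq 2$ separately. The lower bound is immediate: since $M$ is 2- or 3-semidominant, its minimal generating set $G$ has at least two elements, so $M$ is not principal and $\pd(S/M) \geq 2$. For the upper bound, rather than going through the structural decomposition, I would work directly with the Taylor resolution $\mathbb{T}_M$ and perform consecutive cancellations steered by the distinguished generator $\mu \in G$ that divides the $\lcm$ of every pair of distinct minimal generators.

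The key observation is that for every subset $\sigma \subseteq G \setminus \{\mu\}$ with $|\sigma| \geq 2$, picking any two elements $g_1, g_2 \in \sigma$ and applying the hypothesis gives $\mu \mid \lcm(g_1, g_2) \mid \lcm(\sigma)$; consequently $\lcm(\sigma \cup \{\mu\}) = \lcm(\sigma)$, so the Taylor faces $[\sigma]$ and $[\sigma \cup \{\mu\}]$ share the same multidegree. The differential coefficient between $[\sigma \cup \{\mu\}]$ and its facet $[\sigma]$ is therefore $\pm \lcm(\sigma \cup \{\mu\})/\lcm(\sigma) = \pm 1$, hence invertible. I would then cancel all such matched pairs, iterating in decreasing order of homological degree.

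The crux of the argument is verifying that these cancellations can be performed in succession without destroying the invertibility of the remaining matched entries. For two distinct pairs $([\sigma], [\sigma \cup \{\mu\}])$ and $([\tau], [\tau \cup \{\mu\}])$ with $|\sigma| = |\tau|$, the Gaussian-elimination update formula of [Al, Lemma 3.2] involves the cross entries $a_{[\sigma], [\tau \cup \{\mu\}]}$ and $a_{[\tau], [\sigma \cup \{\mu\}]}$. Since $|\sigma| = |\tau|$ and $\mu \notin \sigma$, $[\sigma]$ is a facet of $[\tau \cup \{\mu\}]$ only if $\sigma = \tau$; hence both cross entries vanish and the updated matched entry is still $\pm 1$. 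When $|\sigma| \neq |\tau|$, the update formula does not couple the two pairs at all. So all matched pairs can indeed be cancelled in sequence, and after the process the only surviving Taylor basis elements are $[\varnothing]$ in degree $0$, $[g]$ for $g \in G$ in degree $1$, and $[\{\mu, g\}]$ for $g \in G \setminus \{\mu\}$ in degree $2$. The resulting free resolution of $S/M$ has length at most $2$, so $\pd(S/M) \leq 2$; combined with the lower bound, this yields $\pd(S/M) = 2$. The main obstacle is precisely this compatibility of the simultaneous cancellations, which the cross-entry computation resolves by exploiting the fact that within a single homological degree no Taylor face is a facet of another.
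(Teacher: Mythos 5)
Your proof is correct, but it takes a genuinely different route from the paper. The paper proves this theorem with the first structural decomposition: the distinguished generator must be one of the nondominant generators, say $n_1$, and one checks the decomposition term by term --- for $j\geq 2$ one gets $n_1\mid m$, so $M_m=S$; for $j=1$ one gets $n'_1\mid n'_k$ for all $k$, so $M_m$ is principal; and for $j=0$ the $2$- or $3$-semidominance hypothesis is exactly what keeps the remaining purely nondominant term $(n_1,\ldots,n_t)$, $t\leq 3$, at projective dimension at most $2$. You instead run consecutive cancellations directly on $\mathbb{T}_M$, matching $[\sigma]$ with $[\sigma\cup\{\mu\}]$ for every $\sigma\subseteq G\setminus\{\mu\}$ with $|\sigma|\geq 2$: the pivots are $\pm1$ because $\mu\mid\lcm(\sigma)$, cancellations spanning different pairs of homological degrees do not interact by [Al, Lemma 3.2(iv)], and within a fixed degree the cross entries such as $a_{[\sigma],[\tau\cup\{\mu\}]}$ vanish; for full rigor you should add the one-line induction that, by the elimination formula of [Al, Lemma 3.2(iii)], these cross entries \emph{remain} zero after each successive cancellation, so every pivot stays $\pm1$ until the end. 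What is notable is that your upper-bound argument never uses the $2$- or $3$-semidominance hypothesis at all: it shows that whenever $M$ has at least two minimal generators and one of them divides the $\lcm$ of every pair of generators, then $\pd(S/M)=2$, which is precisely the conjecture stated at the end of Section 7 of the paper. The paper's proof buys an illustration of the structural decomposition machinery (the theme of that section), while your Taylor-complex matching is more elementary and strictly more general.
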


\begin{proof}
Let $G=\{m_1,\ldots,m_s,n_1,\ldots,n_t\}$ be the minimal generating set of $M$, where $m_1,\ldots,$\\
$m_s$ are dominant, and $n_1,\ldots,n_t$ are nondominant. By hypothesis, there is an element $n$ in $G$ that divides the $\lcm$ of every pair of elements in $G$. However, since each $m_i$ is dominant, $m_i\nmid \lcm(n_1,n_2)$. It follows that $n$ must be one of the $n_1,\ldots,n_t$; say $n=n_1$. Let 
$\betti_{k,l}(S/M)= \sum\limits_{(j,m)\in C} \betti_{k-j,l/m}(S/M_m)$ be the first structural decomposition of $M$. Let $(j,m)\in C$. We will prove that $\betti_{k-j,l/m}(S/M_m)=0$ for all $k \geq 3$, and all monomials $l$.\\
First, let us assume that $j \geq 2$. Then there are monomials $m_{i_1},\ldots,m_{i_j} \in G$ such that $m=\lcm(m_{i_1},\ldots,m_{i_j})$, and $M_m=(n'_1,\ldots,n'_t)$, where $n'_i= \dfrac{\lcm(m,n_i)}{m}$. In particular, 
$n_1\mid \lcm(m_{i_1},m_{i_2})$, and hence, $n_1 \mid m$. This implies that $\lcm(m,n_1)=m$, and thus $n'_1=\dfrac{\lcm(m,n_1)}{m}=1$. Therefore, $M_m=S$, and $\betti_{k-j,l/m}(S/M_m)=0$ for all $k \geq 3$ and all monomials $l$.\\
Now, let us assume that $j=1$. Then $m=m_{i_1}$, and by hypothesis, $n_1 \mid \lcm(m_{i_1},n_k)$, for all $k=2,\ldots,t$. Then $\lcm(m_{i_1},n_1) \mid (m_{i_1},n_k)$, for all $k=2,\ldots,t$, and therefore, $n'_1\mid n'_k$, for all $k=2,\ldots,t$. This means that $M_m=(n'_1,\ldots,n'_t)=(n'_1)$, and thus, $\pd(S/M_m)\leq 1$. It follows that $\betti_{k-j,l/m}(S/M_m)=\betti_{k-1,l/m}(S/M_m)=0$, for all $k\geq 3$, and all monomials $l$.
Finally, suppose that $j=0$. Then $m=1$, and $M_m=(n'_1,\ldots,n'_t)=(n_1,\ldots,n_t)$. Since $M$ is either $2$-semidominant or $3$-semidominant, $t=2$ or $t=3$. If $t=2$, then $M_m=(n_1,n_2)$, and thus 
$\betti_{k-j,l/m}(S/M_m)=\betti_{k,l}(S/M_m)=0$, for all $k\geq 3$, and all monomials $l$. On the other hand, if $t=3$, since $n_1 \mid \lcm(n_2,n_3)$, $M_m=(n_1,n_2,n_3)$ is not dominant, and we must have 
$\pd(S/M_m)\leq 2$. It follows that $\betti_{k-j,l}(S/M_m)=\betti_{k,l}(S/M_m)=0$, for all $k \geq 3$, and all monomials $l$. Therefore, for all $k \geq 3$, and all monomials $l$ the first structural decomposition of $M$ gives 
$\betti_{k,l}(S/M)=\sum\limits_{(j,m)\in C}\betti_{k-j,l/m}(S/M_m)=0$, which means that $\pd(S/M)\leq 2$. However, since $\#G\geq 2$, we must have $\pd(S/M)=2$.
\end{proof}

\begin{example}%6.2
Let 
\begin{align*}
m_1 & =a^3c^2d^2e^2f^2g^2        &     m_2 & =a^2b^3d^2e^2f^2g^2        &      m_3 & =a^2b^2c^3e^2f^2g^2\\
 m_4 & =a^2b^2c^2d^3f^2g^2       &    m_5 & =a^2b^2c^2de^3g^2            &      m_6 & =b^2c^2d^2e^2g^3\\
  n_1 & =abcdefg                          &      n_2 & =a^2b^2c^2d^2e^2f                  &     n_3 & = a^2b^2c^2d^2ef^2.
  \end{align*}
  
   Let $M=(m_1,\ldots,m_6)$; $M_2=(m_1,\ldots,m_6,n_1,n_2)$; $M_3=(m_1,\ldots,m_6,n_1,n_2,n_3)$. It is clear that $M$ is dominant; $M_2$ is $2$-semidominant, and $M_3$ is $3$-semidominant. Note that $n_1$ divides the $\lcm$ of every pair of monomials in $\{m_1,\ldots,m_6,n_1,n_2,n_3\}$. By Theorem \ref{pd}, $\pd(S/M)=6$; $\pd(S/M_2)=\pd(S/M_3)=2$. (We see how adding a few monomials to the minimal generating set can change the projective dimension dramatically.)
\end{example}
The fact that Artinian monomial ideals have maximum projective dimension (in the sense of Hilbert Syzygy theorem) was proven by Charalambous [Ch] (see also [Pe, Corollary 21.6]), using the radical of an ideal as main tool. Here we give an alternative proof of this fact that relies entirely on the first structural decomposition.

\begin{theorem}\label{Artinian}%6.3
If $M$ is Artinian in $S=k[x_1,\ldots,x_n]$, then $\pd(S/M)=n$.
\end{theorem}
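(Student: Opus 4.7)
The plan is to prove $\pd(S/M)\geq n$ by induction on $n$; the Hilbert Syzygy Theorem supplies the reverse inequality $\pd(S/M)\leq n$, so this suffices. The base case $n=1$ is immediate: an Artinian monomial ideal in $k[x_1]$ is $M=(x_1^a)$ for some $a\geq 1$, whence $\pd(S/M)=1$. For the inductive step, assume the result for fewer than $n$ variables, and let $M$ be Artinian in $S=k[x_1,\ldots,x_n]$ with $n\geq 2$. Because $M$ is Artinian, for each $i$ there is a smallest $a_i$ with $x_i^{a_i}\in M$, and this $x_i^{a_i}$ must be a minimal generator (any proper divisor would be a smaller power of $x_i$, contradicting the minimality of $a_i$). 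In particular, $x_n^{a_n}$ is a minimal generator of $M$, and it is dominant in the variable $x_n$, since any other minimal generator whose $x_n$-exponent were $\geq a_n$ would be divisible by $x_n^{a_n}$ and hence fail to be minimal.

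I would then apply the third structural decomposition (Theorem~\ref{1SD} with $d=1$ and $m_1=x_n^{a_n}$) to obtain
\[\betti_{k,l}(S/M)=\betti_{k,l}(S/(h_1,\ldots,h_c))+\betti_{k-1,\,l/x_n^{a_n}}(S/M_{x_n^{a_n}}),\]
where $h_1,\ldots,h_c$ are the remaining minimal generators of $M$. The key observation is that every $h_i'=\lcm(x_n^{a_n},h_i)/x_n^{a_n}$ is free of $x_n$ (because the $x_n$-exponent of $h_i$ is strictly less than $a_n$), so $M_{x_n^{a_n}}=M'\cdot S$ for a monomial ideal $M'\subseteq S':=k[x_1,\ldots,x_{n-1}]$. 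Moreover, $M'$ is Artinian in $S'$: for each $i<n$, the pure-power minimal generator $x_i^{a_i}$ is some $h_j$, and then $h_j'=\lcm(x_n^{a_n},x_i^{a_i})/x_n^{a_n}=x_i^{a_i}$ lies in $M'$. Since $S$ is free (hence flat) over $S'$, tensoring a minimal free resolution of $S'/M'$ with $S$ yields a minimal free resolution of $S/M_{x_n^{a_n}}$, so $\pd_S(S/M_{x_n^{a_n}})=\pd_{S'}(S'/M')=n-1$ by the inductive hypothesis.

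To finish, pick a monomial $l'$ with $\betti_{n-1,l'}(S/M_{x_n^{a_n}})>0$ and set $l=x_n^{a_n}l'$. Since both summands in the structural decomposition are nonnegative,
\[\betti_{n,l}(S/M)\;\geq\;\betti_{n-1,l'}(S/M_{x_n^{a_n}})\;>\;0,\]
so $\pd(S/M)\geq n$, and combined with Hilbert's Syzygy bound, $\pd(S/M)=n$. The main obstacle is essentially a bookkeeping one: showing that $M_{x_n^{a_n}}$ is generated inside the smaller ring $S'$, that the Artinian hypothesis descends to $M'$, and that projective dimension is preserved along $S'\hookrightarrow S$. All three points follow from the fact that $x_n^{a_n}$ is both a pure power and a minimal generator of $M$, which is precisely what Artinianness provides. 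Once these are in place, the third structural decomposition does all the work.
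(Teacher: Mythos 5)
Your proposal is correct and follows essentially the same route as the paper: induction on $n$, splitting off a dominant pure-power generator via the third structural decomposition, and applying the inductive hypothesis to $M_{x^{a}}$, which is Artinian in $n-1$ variables. Your explicit flat base-change justification that $\pd_S(S/M_{x_n^{a_n}})=\pd_{S'}(S'/M')$ is a welcome bit of care that the paper leaves implicit, but it does not change the argument.
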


\begin{proof}
By induction on $n$. If $n=1$, the result is trivial. Suppose that $\pd(S/M)=n-1$, for Artinian ideals $M$ in $n-1$ variables.\\
Let $M$ be Artinian in $S=k[x_1,\ldots,x_n]$. Then, for each $i=1,\ldots,n$, the minimal generating set $G$ of $M$ contains a monomial $x_i^{\alpha_i}$, $\alpha_i\geq 1$. Notice that each $x_i^{\alpha_i}$ is dominant. 
Let $\betti_{k,l}(S/M)=\sum\limits_{(j,m)\in C} \betti_{k-j,l/m}(S/M_m)$ be the third structural decomposition of $M$, where $m_1=x_1^{\alpha_1}$, and $H=G\setminus\{ x_1^{\alpha_1} \}$. Since $x_1^{\alpha_1}$ is dominant, $(1,x_1^{\alpha_1}) \in C$. Thus, $\betti_{k-1,l/x_1^{\alpha_1}}(S/M_{x_1^{\alpha_1}})$ is one of the terms on the right side of the third structural decomposition.\\
Let $G=\{x_1^{\alpha_1},\ldots,x_n^{\alpha_n}, l_1,\ldots,l_q\}$. By construction,
\begin{align*}
 M x_1^{\alpha_1} &=\left(\dfrac{\lcm(x_1^{\alpha_1},x_2^{\alpha_2})}{x_1^{\alpha_1}},\ldots,\dfrac{\lcm(x_1^{\alpha_1},x_n^{\alpha_n})}{x_1^{\alpha_1}},\dfrac{\lcm(x_1^{\alpha_1},l_1)}{x_1^{\alpha_1}},\ldots,\dfrac{\lcm(x_1^{\alpha_1},l_q)}{x_1^{\alpha_1}}\right)\\
 & =(x_2^{\alpha_2},\ldots,x_n^{\alpha_n},l'_1,\ldots,l'_q). 
 \end{align*}
Since $x_1^{\alpha_1}$ is dominant, $x_1$ does not appear in the factorization of $l'_i$. Thus, $M_{x_1^{\alpha_1}}$ is an Artinian ideal in $n-1$ variables. By induction hypothesis, $\pd(S/M_{x_1^{\alpha_1}})=n-1$. Therefore, there is a monomial $l$, such that $\betti_{n-1,l}(S/M_{x_1^{\alpha_1}}) \neq 0$. Let $l'=l x_1^{\alpha_1}$.Then, $\betti_{n-1,l'/x_1^{\alpha_1}}(S/M_{x_1^{\alpha_1}}) \neq 0$. Finally,
\[\betti_{n,l'}(S/M)=\sum\limits_{(j,m)\in C\setminus\{(1,x_1^{\alpha_1})\}} \betti_{k-j,l'/m}(S/M_m) + \betti_{n-1,l'/x_1^{\alpha_1}}(S/M_{x_1^{\alpha_1}}) \neq 0,\] \\
which implies that $\pd(S/M)=n$.
\end{proof}

\begin{theorem} \label{Gasharov}%6.4
Let $M$ be Artinian in $S=k[x_1,\ldots,x_n]$. Let $\mathbb{F}_M$ be a minimal resolution of $S/M$, obtained from $\mathbb{T}_M$ by means of consecutive cancellations. Then there is a basis element $[\sigma]$ of $\mathbb{F}_M$, such that $\hdeg[\sigma]=n$, and $\mdeg[\sigma]$ is divisible by each variable $x_1,\ldots,x_n$.
\end{theorem}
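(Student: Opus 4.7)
The plan is to carry out an induction on $n$, following the same structural scaffolding that powers the proof of Theorem \ref{Artinian}, and to upgrade the argument so that we track not just the existence of a nonzero Betti number in homological degree $n$ but also the multidegree of the witnessing basis element.

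For the base case $n=1$, the ideal $M$ must equal $(x_1^{\alpha_1})$ for some $\alpha_1\geq 1$. Then $\mathbb{T}_M$ is already minimal, and the Taylor symbol $[x_1^{\alpha_1}]$ has homological degree $1=n$ and multidegree $x_1^{\alpha_1}$, which is divisible by $x_1$. This settles $n=1$.

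For the inductive step, assume the statement holds for every Artinian monomial ideal in $n-1$ variables. Given $M$ Artinian in $S=k[x_1,\ldots,x_n]$, the minimal generating set $G$ contains pure powers $x_1^{\alpha_1},\ldots,x_n^{\alpha_n}$, each of which is dominant. Form the third structural decomposition of $M$ with $m_1=x_1^{\alpha_1}$ and $H=G\setminus\{x_1^{\alpha_1}\}$, so that $(1,x_1^{\alpha_1})\in C$. As computed in the proof of Theorem \ref{Artinian}, $M_{x_1^{\alpha_1}}$ is generated by monomials involving only $x_2,\ldots,x_n$ and contains each of $x_2^{\alpha_2},\ldots,x_n^{\alpha_n}$ in its minimal generating set, so it is an Artinian monomial ideal in the polynomial ring $k[x_2,\ldots,x_n]$ (and its multigraded Betti numbers are unaffected by viewing it inside $S$ instead, since $x_1$ acts as a nonzerodivisor).

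Applying the induction hypothesis to $M_{x_1^{\alpha_1}}$, we obtain a minimal resolution $\mathbb{F}$ of $S/M_{x_1^{\alpha_1}}$ with a basis element $[\sigma']$ such that $\hdeg[\sigma']=n-1$ and $\mdeg[\sigma']$ is divisible by each of $x_2,\ldots,x_n$. In particular $m':=\mdeg[\sigma']$ occurs in $\mathbb{T}_{M_{x_1^{\alpha_1}}}$. Theorem \ref{11}(ii), applied with $m=x_1^{\alpha_1}$, $j=1$, $i=n-1$, and $\mathbb{G}=\mathbb{F}_M$, produces a basis element $[\sigma]$ of $\mathbb{F}_M$ with $\hdeg[\sigma]=n$ and $\mdeg[\sigma]=x_1^{\alpha_1}\cdot m'$. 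This multidegree is divisible by $x_1$ (from the $x_1^{\alpha_1}$ factor) and by each of $x_2,\ldots,x_n$ (from $m'$), completing the induction.

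The only real obstacle is a conceptual one: one must trust that Theorem \ref{11}(ii) delivers the lift inside \emph{any} minimal resolution obtained from $\mathbb{T}_M$ by consecutive cancellations, including the specific $\mathbb{F}_M$ named in the statement; since that theorem is formulated for arbitrary minimal resolutions of $S/M$, this is automatic. Everything else is a direct reduction from $n$ variables to $n-1$, exactly parallel to the projective dimension computation in Theorem \ref{Artinian}.
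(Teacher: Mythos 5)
Your proof is correct, but it follows a genuinely different route from the paper. The paper's argument is a short proof by contradiction: it takes the homological degree $n$ basis element supplied by Theorem \ref{Artinian} as a black box, assumes its multidegree $m$ misses some variable, restricts the minimal resolution to the subcomplex of multidegrees dividing $m$ via [GHP, Theorem 2.1] to get a minimal resolution of the ideal $M_m$ generated by the minimal generators dividing $m$, and then contradicts the Hilbert Syzygy bound $\pd(S/M_m)\leq n-1$; this reliance on the restriction theorem is also why the statement singles out resolutions obtained from $\mathbb{T}_M$ by consecutive cancellations. You instead rerun the induction behind Theorem \ref{Artinian} with a strengthened inductive statement that carries the divisibility of the multidegree, using only the third structural decomposition and the lift of Theorem \ref{11}(ii) with $m=x_1^{\alpha_1}$, $j=1$. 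What each buys: the paper's route is shorter and reuses Theorem \ref{Artinian} verbatim; yours stays entirely inside the structural-decomposition machinery, needs neither [GHP, Theorem 2.1] nor the Syzygy theorem, and in fact reproves Theorem \ref{Artinian} with the extra multidegree information attached. Two small points: the pure powers $x_2^{\alpha_2},\ldots,x_n^{\alpha_n}$ need not belong to the \emph{minimal} generating set of $M_{x_1^{\alpha_1}}$ (e.g.\ $M=(x_1^2,x_2^2,x_1x_2)$ gives $M_{x_1^2}=(x_2)$), but since the ideal contains them it is still Artinian in $k[x_2,\ldots,x_n]$, which is all you use; and your appeal to the induction hypothesis should be read as yielding $\betti_{n-1,m'}(S/M_{x_1^{\alpha_1}})\neq 0$ with $m'$ an lcm of minimal generators, so that $m'$ occurs in $\mathbb{T}_{M_{x_1^{\alpha_1}}}$ and Theorem \ref{11}(ii) indeed transfers the element into the given $\mathbb{F}_M$ — both of which your write-up already handles at the same level of rigor as the paper.
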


\begin{proof}
By Theorem \ref{Artinian}, there is a basis element $[\sigma]$ in the basis of $\mathbb{F}_M$, such that $\hdeg[\sigma]=n$. \\
By means of contradiction, suppose that the set $\{x_{j_1},\ldots,x_{j_i}\}$ of all variables dividing $\mdeg[\sigma]$ is a proper subset of $\{x_1,\ldots,x_n\}$; that is $i \leq n-1$.\\
Let $m=\mdeg[\sigma]$; let $G$ be the minimal generating set of $M$, and let $M_m$ be the ideal generated by $\{l \in G: l \mid m\}$. By [GHP, Theorem 2.1], there is a subcomplex $\left(\mathbb{F}_M\right)_{\leq m}$ of $\mathbb{F}_M$, such that $[\sigma]$ is a basis element of $\left(\mathbb{F}_M\right)_{\leq m}$, and $\left(\mathbb{F}_M\right)_{\leq m}$ is a minimal resolution of $S/M_m$. Therefore, $\pd (S/M_m) \geq \hdeg[\sigma]=n$. However, since $M_m$ is a monomial ideal in $k[x_{j_1},\ldots,x_{j_i}]$, it follows from Hilbert Syzygy theorem that $\pd(S/M_m)\leq i \leq n-1$, a contradiction.\\
We conclude that $\mdeg[\sigma]$ is divisible by $x_1,\ldots,x_n$.
\end{proof}

Now we are ready to prove Charalambous theorem.

\begin{theorem} \label{combinatorics}%6.5
Let $M$ be Artinian in $S=k[x_1,\ldots,x_n]$. Then, for all $i=0,\ldots,n$, $\betti_i(S/M) \geq {n \choose i }$.
\end{theorem}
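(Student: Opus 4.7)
The plan is to exhibit, for each $i$-subset $I \subseteq \{1,\ldots,n\}$, a distinct basis element of a minimal resolution $\mathbb{F}_M$ of $S/M$ with homological degree $i$ and with multidegree supported exactly on $I$; since there are $\binom{n}{i}$ such subsets, this immediately yields $\betti_i(S/M) \geq \binom{n}{i}$.

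Because $M$ is Artinian, its minimal generating set $G$ contains pure-power generators $x_1^{\alpha_1},\ldots,x_n^{\alpha_n}$. For each $i$-subset $I$, I would set $m_I = \prod_{j \in I} x_j^{\alpha_j}$ and let $N_I$ be the ideal generated by those elements of $G$ that divide $m_I$. Since each $x_j^{\alpha_j}$ with $j \in I$ is such a generator and every minimal generator of $N_I$ uses only variables in $I$, the ideal $N_I$ is Artinian when viewed inside $S_I := k[x_j : j \in I]$.

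Next I would apply Theorem \ref{Gasharov} to $N_I$ inside $S_I$---which has exactly $|I| = i$ variables---to obtain a basis element $[\sigma_I]$ of a minimal resolution of $S_I/N_I$ with $\hdeg[\sigma_I] = i$ and $\mdeg[\sigma_I]$ divisible by every variable in $I$. Because $\mdeg[\sigma_I]$ must also divide $m_I$ (as an lcm of generators of $N_I$), its support is exactly $I$. To transfer $[\sigma_I]$ into $\mathbb{F}_M$, I would base-change this minimal resolution along the flat extension $S_I \hookrightarrow S$, which preserves minimality, multidegrees, and homological degrees, and then invoke [GHP, Theorem 2.1]---the same tool used in the proof of Theorem \ref{Gasharov}---which guarantees that the subcomplex $(\mathbb{F}_M)_{\leq m_I}$ of $\mathbb{F}_M$ is itself a minimal resolution of $S/N_I$. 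By uniqueness of minimal resolutions, $[\sigma_I]$ corresponds to a basis element of $(\mathbb{F}_M)_{\leq m_I}$, and hence of $\mathbb{F}_M$, with homological degree $i$ and multidegree supported exactly on $I$.

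Finally, basis elements arising from distinct $i$-subsets $I \neq I'$ have multidegrees with different supports, and are therefore distinct, producing the required $\binom{n}{i}$ basis elements in homological degree $i$; the $i = 0$ case is immediate from $\betti_0(S/M) = 1$. I expect the main obstacle to be the clean identification of basis elements across the two ambient rings $S_I$ and $S$, which relies on the uniqueness of minimal resolutions combined with the Gasharov--Hibi--Peeva subcomplex result that is already in play in the proof of Theorem \ref{Gasharov}.
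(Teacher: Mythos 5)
Your proposal is correct and takes essentially the same route as the paper's proof: for each $i$-subset of variables you restrict to the subideal generated by the minimal generators dividing $\prod_{j\in I}x_j^{\alpha_j}$ (which is Artinian in the corresponding subring), apply Theorem \ref{Gasharov} there, transfer the resulting basis element into $\mathbb{F}_M$ via [GHP, Theorem 2.1], and distinguish the $\binom{n}{i}$ elements by the supports of their multidegrees. The only cosmetic difference is that you spell out the flat base change $S_I\hookrightarrow S$ and the uniqueness of minimal resolutions, steps the paper leaves implicit.
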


\begin{proof}
Let $0 \leq i \leq n$. Let $\mathbb{X}_i$ be the class of all subsets of $\{x_1,\ldots,x_n\}$, of cardinality $i$. Then $\#\mathbb{X}_i= {n\choose i}$. Let $\{x_{j_1},\ldots,x_{j_i}\} \in \mathbb{X}_i$. Let $G$ be the minimal generating set of $M$, and let $G_{j_1,\ldots,j_i}$ and let $M_m$ be the monomial ideal generated by $G_{j_1,\ldots,j_i}$. Since $M$ is Artinian in $k[x_1,\ldots,x_n]$, $G$ contains generators of the form $x_{j_1}^{\alpha_{j_1}},\ldots,x_{j_i}^{\alpha_{j_i}}$, and hence, $M_m$ is Artinian in $k[x_{j_1},\ldots,x_{j_i}]$. By Theorem \ref{Gasharov}, there is a basis element $[\sigma_{j_1,\ldots,j_i}]$. of a minimal resolution $\left(\mathbb{F}_M\right)_{\leq m}$ of $S/M_m$, such that $\hdeg[\sigma_{j_1,\ldots,j_i}]=i$, and $\mdeg[\sigma_{j_1,\ldots,j_i}]$ is divisible by $x_{j_1},\ldots,x_{j_i}$. Since $M_m$ is an ideal in $k[x_{j_1},\ldots,x_{j_i}]$, $\mdeg[\sigma_{j_1,\ldots,j_i}]$ is not divisible by any variable of $\{x_1,\ldots,x_n\} \setminus \{x_{j_1},\ldots,x_{j_i}\}$. By [GHP, Theorem 2.1], $\left(\mathbb{F}_M\right)_{\leq m}$ can be regarded as a subcomplex of a minimal resolution $\mathbb{F}_M$ of $S/M$. Therefore, $[\sigma_{j_1,\ldots,j_i}]$ is a basis element of $\mathbb{F}_M$. Notice that if $\{x_{j_1},\ldots,x_{j_i}\}$ and $\{x_{k_1},\ldots,x_{k_i}\}$ are different elements of $\mathbb{X}_i$, the basis elements $[\sigma_{j_1,\ldots,j_i}]$ and $[\sigma_{k_1,\ldots,k_i}]$, determined by these sets must be different too. In fact, the sets of variables dividing $\mdeg[\sigma_{j_1,\ldots,j_i}]$ and $\mdeg[\sigma_{k_1,\ldots,k_i}]$ are $\{x_{j_1},\ldots,x_{j_i}\}$ and $\{x_{k_1},\ldots,x_{_i}\}$, respectively. It follows that $\betti_i(S/M) \geq \# \mathbb{X}_i = {n \choose i }$.
\end{proof}

\begin{theorem}\label{pd=n}%6.6
Let $M$ be an ideal in $S=k[x_1,\ldots,x_n]$, with minimal generating set $G$. Suppose that $G$ contains a subset of the form
\[G'=\{x_1^{\alpha_i}x_i^{\beta_i}: \alpha_i \geq 0, \text{ }\beta_i \geq1, \text{ for all }i=1,\ldots,n\}.\]
Then $\pd(S/M)=n$.
\end{theorem}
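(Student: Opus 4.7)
My plan is to mimic the strategy behind Theorem \ref{Artinian}: isolate a pure power of $x_1$ inside $G$, apply the third structural decomposition, and reduce to an Artinian ideal in one fewer variable.

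First, observe that the $i=1$ element of $G'$ equals $x_1^{\alpha_1}x_1^{\beta_1} = x_1^{\alpha_1+\beta_1}$, so $G$ contains a pure power $m_1 := x_1^{\alpha_1+\beta_1}$ of $x_1$. I would then argue that $m_1$ is dominant in $G$: any minimal generator $h \in G \setminus \{m_1\}$ whose $x_1$-exponent were at least $\alpha_1+\beta_1$ would be divisible by $m_1$, contradicting the minimality of $G$. Thus $m_1$ is dominant in the variable $x_1$, and Theorem \ref{1SD} applied with $d=1$ yields the third structural decomposition
\[\betti_{k,l}(S/M) = \betti_{k,l}(S/M_1) + \betti_{k-1,l/m_1}(S/M_{m_1}).\]

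Next I would analyze $M_{m_1}$. Because $m_1$ is dominant in $x_1$, every $h \in G \setminus \{m_1\}$ factors as $h = x_1^{a}\tilde h$ with $\tilde h \in k[x_2,\ldots,x_n]$ and $a < \alpha_1+\beta_1$, so $\lcm(m_1,h)/m_1 = \tilde h$ lies in $k[x_2,\ldots,x_n]$. For each $i = 2,\ldots,n$, the element $x_1^{\alpha_i}x_i^{\beta_i}$ of $G'$ contributes the pure power $x_i^{\beta_i}$ to $M_{m_1}$. Consequently, $M_{m_1}$ is generated by monomials not involving $x_1$, and contains a pure power of each of $x_2,\ldots,x_n$; viewed inside $S' := k[x_2,\ldots,x_n]$, it is therefore Artinian.

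Finally, I would invoke Theorem \ref{Artinian} in $S'$ to conclude $\pd_{S'}(S'/M_{m_1}) = n-1$, and transfer this equality to $\pd_S(S/M_{m_1}) = n-1$ using that $S = S'[x_1]$ is a polynomial extension (so tensoring a minimal $S'$-resolution with $k[x_1]$ produces a minimal $S$-resolution). Picking a monomial $l'$ with $\betti_{n-1,l'}(S/M_{m_1}) > 0$ and setting $l := m_1 l'$, nonnegativity of the remaining summand $\betti_{n,l}(S/M_1)$ in the structural decomposition gives
\[\betti_{n,l}(S/M) \geq \betti_{n-1,l'}(S/M_{m_1}) > 0,\]
so $\pd(S/M) \geq n$; combined with the Hilbert Syzygy upper bound, $\pd(S/M) = n$. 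I expect no serious obstacle: the only delicate point is verifying that $m_1$ is dominant, which follows at once from minimality of $G$, and the rest of the argument parallels the proof of Theorem \ref{Artinian}.
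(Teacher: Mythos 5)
Your proposal is correct and follows essentially the same route as the paper: extract the pure power $x_1^{\alpha_1+\beta_1}$ from $G'$, note it is dominant by minimality of $G$, apply the structural decomposition so that $M_{x_1^{\alpha_1+\beta_1}}$ becomes an Artinian ideal in $k[x_2,\ldots,x_n]$, invoke Theorem \ref{Artinian}, and conclude by nonnegativity of the remaining terms. The only cosmetic difference is that you use the third structural decomposition ($d=1$) where the paper invokes the first, and you spell out the base-change step $\pd_{S'}=\pd_S$ that the paper leaves implicit; neither changes the argument.
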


\begin{proof}
Let $G=G' \cup \{l_1,\ldots,l_q\}$. Let $\betti_{k,l}(S/M)=\sum\limits_{(j,m)\in C} \betti_{k-j,l/m} (S/M_m)$ be the first structural decomposition of $M$. Let $\gamma=\alpha_1+\beta_1$. Then $x_1^\gamma=x_1^{\alpha_1}x_1^{\beta_1}$ is a minimal generator in $G$. Notice that the exponent $\gamma$ with which $x_1$ appears in the factorization of $x_1^\gamma$, must be larger than the exponent with which $x_1$ appears in the factorization of any other minimal generator $l$ in $G$; otherwise, $l$ would be a multiple of $x_1^\gamma$. Hence, $x_1^\gamma$ is dominant in $G$; which implies that $(1,x_1^{\alpha_{11}}) \in C$. Thus, $\betti_{k-1,l/x_1^{\gamma}}(S/M_{x_1^{\gamma}})$ is one of the terms on the right side of the first structural decomposition. Now,
\begin{align*}
M_{x_1^{\gamma}} &=\left(\dfrac{\lcm(x_1^{\gamma},x_1^{\alpha_2}x_2^{\beta_2})}{x_1^{\gamma}}, \ldots, \dfrac{\lcm(x_1^{\gamma},x_1^{\alpha_n}x_n^{\beta_n})}{x_1^{\gamma}}, \dfrac{\lcm(x_1^{\gamma},l_1)}{x_1^{\gamma}}, \ldots, \dfrac{\lcm(x_1^{\gamma},l_q)}{x_1^{\gamma}}\right)\\
& =(x_2^{\beta_2},\ldots, x_n^{\beta_n},l'_1,\ldots,l'_q),
\end{align*}
where $l'_i=\dfrac{\lcm(x_1^{\gamma},l_i)}{x_1^{\gamma}}$.\\
Since $x_1^{\gamma}$ is dominant, $x_1$ does not appear in the factorization of $l'_i$. Thus, $M_{x_1^{\gamma}}$ is an Artinian monomial ideal in $k[x_2,\ldots,x_{n-1}]$. It follows that $\pd(S/M_{x_1^{\gamma}})=n-1$. Therefore, there is a monomial $l$, such that $\betti_{n-1,l}(S/M_{x_1^{\gamma}}) \neq 0$. Let $l'=l x_1^{\gamma}$. Then, $\betti_{n-1,l'/x_1^{\gamma}}(S/M_{x_1^{\gamma}}) \neq 0$. Finally,
\[\betti_{n,l'}(S/M)= \sum\limits_{(j,m)\in C \setminus \{(1,x_1^{\gamma})\}}  \betti_{n-j,l'/m} (S/M_m) + \betti_{n-1,l'/x_1^{\gamma}}(S/M_{x_1^{\gamma}}) \neq 0,\]
which implies that $\pd(S/M)=n$.
\end{proof}

\begin{example}
Let $M=(x_1^3,x_1x_2,x_1x_3,x_1x_4,x_1x_5,x_2x_4,x_3x_5)$. Then, the subset $G'=\{x_1^3,x_1x_2,x_1x_3,x_1x_4,x_1x_5\}$ of the minimal generating set of $M$, satisfies the hypotheses of Theorem \ref{pd=n}. Hence, $\pd(S/M)=5$.
\end{example}

The hypothesis of Theorem \ref{pd=n} is more common than it may seem. For instance, Artinian ideals satisfy this condition. The hypothesis of Theorem \ref{pd=n} is also satisfied if $M$ is the smallest Borel ideal containing a given monomial. A particular case of Theorem \ref{pd=n} is proven in [Ra].

\section{Conclusion}
We close this article with some remarks, questions, and conjectures. 

The structural decomposition is one of the very few techniques that allow us to compute Betti numbers by hand, not for arbitrary monomial ideals, but for a wide class of them. As a matter of fact, the ideal $M$ in Example \ref{example 2SD}, is minimally generated by $7$ monomials and even so we were able to compute the numbers $\betti_{k,l}(S/M)$. (Starting with $\mathbb{T}_M$, we could also calculate the $\betti_{k,l}(S/M)$ by means of consecutive cancellations. But since the basis of $\mathbb{T}_M$ contains $\sum\limits {7 \choose i }=128$ elements, and the basis of the minimal resolution of $S/M$ contains $20$ elements, we should make $\dfrac{128-20}{2}=54$ consecutive cancellations, which obviously requires the use of software.) 
  
On a different note, the structural decomposition of an ideal $M$ generates a finite family $\{M_m\}$ of ideals that usually has these two properties: a) the minimal generating set of each $M_m$ has smaller cardinality than the minimal generating set of $M$; b) if $M$ is an ideal in $S=k[x_1,\ldots,x_n]$, then $M_m$ is an ideal in a polynomial ring with less than $n$ variables. As a consequence, the structural decomposition works well when one wants to prove facts by induction. Theorems \ref {char Betti numbers} and \ref{MinHomDeg}, for instance, are proven by induction on the cardinaliy of the minimal generating set. On the other hand, Theorem \ref{Artinian} is proven by induction on the number of variables.

One last comment. Theorem \ref{char Betti numbers} does not depict the entire family of ideals with characteristic Betti numbers. In fact, $M=(a^2bc,b^2c^2,a^2b^2,abc^2)$ is purely nondominant and has characteristic Betti numbers. What other ideals have characteristic Betti numbers? The next two conjectures suggest some possibilities. 

\begin{conjecture}
Suppose that for some ideal $M$, there are indices $i< j$ such that no variable among $x_1,\ldots,x_{i-1},x_{i+1},\ldots,x_{j-1},,x_{j+1},\ldots,x_n$ appears with the same nonzero exponent in the factorization of two minimal generators. Then $M$ has characteristic Betti numbers.
\end{conjecture}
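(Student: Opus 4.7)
The plan is to follow the same inductive strategy used to prove Theorem \ref{char Betti numbers}, with two supporting lemmas tailored to the conjecture's weaker hypothesis, and to induct on the cardinality of the minimal generating set $G$ of $M$.

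The first supporting result is a \emph{preservation lemma}, the analogue of Lemma \ref{Lemma 5.7}: if $M$ has the property with indices $i<j$, then each ideal $M_m$ appearing in the first structural decomposition of $M$ has the same property with the same indices. The proof is a direct transcription of Lemma \ref{Lemma 5.7}: if two generators $h'_a,h'_b$ of some $M_m=(h'_1,\ldots,h'_c)$ shared a nonzero $x_k$-exponent $\alpha$ for some $k\notin\{i,j\}$, the identities $\lcm(m,h_a)=mh'_a$ and $\lcm(m,h_b)=mh'_b$ force $h_a$ and $h_b$ to have the same nonzero $x_k$-exponent $u+\alpha$ (where $u$ is the $x_k$-exponent of $m$), contradicting the hypothesis on $M$.

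The second supporting result, which is what replaces the role of Lemma \ref{almost generic lemma}, is a \emph{non-degeneracy observation}: if $|G|\geq 2$ and $M$ satisfies the conjecture's hypothesis, then $M$ is not purely nondominant. Indeed, for any $k\notin\{i,j\}$, the largest $x_k$-exponent appearing among the minimal generators is either zero or---by the hypothesis that no two generators share a nonzero $x_k$-exponent---uniquely achieved, in which case the generator realizing it is dominant in $x_k$. If $M$ were purely nondominant, every such maximum would have to be zero, forcing every minimal generator to lie in $k[x_i,x_j]$. But the minimal generating set of any monomial ideal in two variables satisfies the staircase property: once ordered by $x_i$-exponent, all $x_i$-exponents are distinct, so the maximum $x_i$-exponent is uniquely achieved and the corresponding generator is dominant in $x_i$, a contradiction.

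With these two lemmas in hand the induction proceeds exactly as in Theorem \ref{char Betti numbers}. The base case $|G|\leq 2$ gives a dominant, hence Scarf, ideal. In the inductive step the non-degeneracy observation guarantees that $M$ has at least one dominant generator, so each $M_m$ in the first structural decomposition is minimally generated by strictly fewer monomials than $M$ and, by the preservation lemma, still satisfies the conjecture's hypothesis; by induction each $M_m$ has characteristic Betti numbers. Then the contradiction argument in the proof of Theorem \ref{char Betti numbers}---two distinct pairs $(j,m)\neq(j',m')\in C$ contributing to the same multidegree $l$ would produce two Taylor symbols of $\mathbb{T}_M$ sharing multidegree $l$ while containing different dominant generators, violating [Al, Lemma 4.3]---yields $\sum_k\betti_{k,l}(S/M)\leq 1$, and combined with the parity argument at the end of that proof finishes the induction. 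The main obstacle is isolating the non-degeneracy observation in the right form: without it, the induction would have to grapple with purely nondominant ideals for which no dominant generator can be peeled off, and the first structural decomposition cannot reduce cardinality.
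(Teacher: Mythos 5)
The statement you are addressing is left as a conjecture in the paper, so there is no in-paper proof to compare against; judged on its own, your argument is essentially correct and does settle it using the paper's own machinery. Your preservation lemma is the verbatim generalization of Lemma \ref{Lemma 5.7} to an exceptional set of two variables (the exponent computation $u+\alpha=\max(u,v)$ nowhere uses how many exceptional indices there are), and your non-degeneracy observation supplies exactly what the proof of Lemma \ref{almost generic lemma} loses when a second exceptional variable is allowed: there a dominant generator is found in any non-exceptional variable with positive maximal exponent, and the only genuinely new case is that all minimal generators lie in $k[x_i,x_j]$, which your staircase argument (in a minimal two-variable generating set all $x_i$-exponents are distinct, so the generator with the largest one is dominant) correctly rules out for a purely nondominant $M$ with at least two generators. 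Two small patches are needed, both at the level of detail the paper itself glosses over in Theorem \ref{char Betti numbers}: in the inductive step you should dispose separately of the degenerate case where $M$ has no nondominant generators (then the first structural decomposition does not reduce the number of generators, but $M$ is dominant, hence Scarf, hence has characteristic Betti numbers, as in your base case), and some $M_m$ may equal $S$, in which case its contribution simply vanishes. Finally, rather than re-running the proof of Theorem \ref{char Betti numbers}, it is slightly cleaner to package your two lemmas as the exact analogue of Lemma \ref{almost generic lemma}: by induction on $\#G$, preservation plus non-degeneracy plus Lemma \ref{lemma} show that the second structural decomposition of $M$ has no purely nondominant part, and then Theorem \ref{char Betti numbers} applies as a black box, leaving the contradiction via [Al, Lemma 4.3] and the parity argument where they already live.
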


\begin{conjecture}
Let $M$ be minimally generated by $\{m_1,\ldots,m_q,n_1,\ldots,n_p\}$, where the $m_i$ are dominant and the $n_i$ are nondominant. If the ideal $M'=(n_1,\ldots,n_p)$ is almost generic, then $M$ has characteristic Betti numbers in minimal homological degrees.
\end{conjecture}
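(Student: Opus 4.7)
The plan is to use the first structural decomposition to reduce the problem to instances already covered by Theorem \ref{MinHomDeg}, and then reassemble the pieces via Lemma \ref{hom degree}. Although $M$ itself is not assumed almost generic, I expect every $M_m$ appearing in its first structural decomposition to be almost generic, since the $M_m$'s are built from the nondominant generators alone.

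Concretely, I would apply the first structural decomposition
\[\betti_{k,l}(S/M)=\sum_{(j,m)\in C}\betti_{k-j,\,l/m}(S/M_m),\]
where each $M_m=(n'_1,\ldots,n'_p)$ with $n'_i=\lcm(m,n_i)/m$. The key first step is to show that each such $M_m$ is almost generic. The argument should be the one used in the proof of Lemma \ref{Lemma 5.7}: if a variable $x\neq x_i$ appeared with the same positive exponent $\alpha$ in two of the generators $n'_a, n'_b$, then letting $u,v,w$ be the exponents of $x$ in $m, n_a, n_b$, the equalities $u+\alpha=\max(u,v)$ and $u+\alpha=\max(u,w)$ would force $v=w=u+\alpha>0$. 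The point is that this collision happens entirely inside $\{n_1,\ldots,n_p\}$, so it contradicts the almost-genericity of the nondominant part $M'=(n_1,\ldots,n_p)$ directly; one never needs to know that $M$ itself is almost generic. Thus the same distinguished index $i$ witnesses almost-genericity for every $M_m$, and this property descends from $\{n'_1,\ldots,n'_p\}$ to its minimal subset.

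With each $M_m$ almost generic, the remainder is a clean assembly of results already in the paper. Lemma \ref{almost generic lemma} tells me that each $M_m$ has a second structural decomposition with no purely nondominant part; Lemma \ref{lemma} then lifts this to the conclusion that $M$'s own second structural decomposition has no purely nondominant part, so by Theorem \ref{char Betti numbers} the ideal $M$ has characteristic Betti numbers. In parallel, Theorem \ref{MinHomDeg} applied to each almost-generic $M_m$ yields that every $M_m$ has characteristic Betti numbers in minimal homological degrees. Plugging both outputs into Lemma \ref{hom degree} produces exactly the statement of the conjecture.

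The main obstacle, modest but genuine, is the first step: confirming that the proof of Lemma \ref{Lemma 5.7} really uses only almost-genericity of the nondominant generators. A careful rereading shows that it does---the dominant $m_i$'s enter only through the exponent $u$ of $x$ in the lcm $m$, which cancels on both sides of $u+\alpha=\max(u,v)$, so the forced equality lives purely among the $n_a$'s. A small auxiliary point worth recording is that $\{n_1,\ldots,n_p\}$ is in fact the minimal generating set of $M'$---no minimal generator of $M$ divides another, so no $n_i$ divides any $n_j$---which ensures that the hypothesis ``$M'$ is almost generic'' is unambiguous. Once these routine checks are in place, the proposed proof requires no new induction and no further machinery beyond what the paper has already developed.
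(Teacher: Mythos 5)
This statement is one of the paper's closing conjectures; the author supplies no proof, so there is no argument of the paper to compare yours against. Judged against the results the paper does prove, your derivation is correct and would upgrade the conjecture to a theorem. The one genuinely new ingredient is your strengthening of Lemma \ref{Lemma 5.7}: in that proof the forced equalities $u+\alpha=\max(u,v)$ and $u+\alpha=\max(u,w)$ give $v=w=u+\alpha>0$, a collision occurring entirely between the nondominant generators $n_a$ and $n_b$, so almost-genericity of $M'=(n_1,\ldots,n_p)$ alone (which, as you observe, is minimally generated by $\{n_1,\ldots,n_p\}$, since these form an antichain under divisibility) already yields the contradiction; hence every $M_m$ in the first structural decomposition of $M$, including $M_1=M'$ itself, is almost generic with the same witness index $i$, the minimal generators of $M_m$ being among the $n'_t$. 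From there the assembly is exactly as you describe and needs no new induction: Lemma \ref{almost generic lemma} applied to each $M_m$, then Lemma \ref{lemma} and Theorem \ref{char Betti numbers}, give that $M$ has characteristic Betti numbers; Theorem \ref{MinHomDeg} applied to each almost generic $M_m$ supplies the other hypothesis of Lemma \ref{hom degree}, which then delivers the conclusion. Two small points to add for completeness: the first structural decomposition presupposes at least one dominant generator, so the degenerate case $q=0$ should be disposed of separately (there $M=M'$ is almost generic and Theorem \ref{MinHomDeg} applies directly; in fact almost-genericity then forces $p\leq 1$), and the cases where some $M_m$ is the unit ideal or principal are vacuously almost generic, so nothing is lost there. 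With those remarks recorded, your proof stays entirely within the paper's machinery and settles the conjecture.
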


Finally, we conjecture that Theorem \ref{pd} admits a simple generalization.

\begin{conjecture}
If $M$ is minimally generated by more than one monomial, and there is a minimal generator that divides the $\lcm$ of every pair of generators, then $\pd(S/M)=2$.

\end{conjecture}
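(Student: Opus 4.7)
The plan is to induct on the cardinality $|G|$ of the minimal generating set of $M$. For the base case $|G|=2$, $M$ is dominant and hence minimally resolved by its Taylor complex, giving $\pd(S/M)=2$. For the inductive step, assume $|G|\geq 3$ and the conclusion holds for all ideals with strictly fewer minimal generators that satisfy the hypothesis. Since $M$ is not principal, $\pd(S/M)\geq 2$, so I need only show $\pd(S/M)\leq 2$. Let $g\in G$ be the hypothesized generator dividing $\lcm(a,b)$ for every two minimal generators $a,b$ of $M$. I first observe that $g$ must be nondominant: if $g$ were dominant in some variable $x$, then for any pair $a,b\in G\setminus\{g\}$ (which exist because $|G|\geq 3$), the exponent of $x$ in $\lcm(a,b)$ would be strictly less than that in $g$, contradicting $g\mid\lcm(a,b)$.

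Case A: $M$ has at least one dominant generator. I would apply the first structural decomposition $\betti_{k,l}(S/M)=\sum_{(j,m)\in C}\betti_{k-j,l/m}(S/M_m)$ and show every summand vanishes for $k\geq 3$. When $j\geq 2$, the monomial $m=\lcm(m_{r_1},\ldots,m_{r_j})$ is divisible by $g$ because $g\mid\lcm(m_{r_1},m_{r_2})\mid m$, so the generator $\lcm(m,g)/m$ of $M_m$ equals $1$, forcing $M_m=S$ and killing the summand. When $j=1$, the hypothesis gives $g\mid\lcm(m,h_i)$ for each other generator $h_i$, hence $\lcm(m,g)/m$ divides each $h_i'$, so $M_m$ is principal and contributes zero for $k\geq 3$. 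When $j=0$, $M_1=(n_1,\ldots,n_t)$ is the ideal generated by the nondominants of $M$; it contains $g$, has $t<|G|$ minimal generators, and inherits the hypothesis (trivially if $t=1$, via $g$ otherwise), so $\pd(S/M_1)\leq 2$ by the inductive hypothesis, and the summand vanishes for $k\geq 3$.

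Case B: $M$ is purely nondominant. The first structural decomposition requires a dominant generator and is therefore inapplicable, so I would argue directly with syzygies. Write $G=\{g,g_1,\ldots,g_r\}$ and let $\sigma_{0i}=(\lcm(g,g_i)/g)\,e_0-(\lcm(g,g_i)/g_i)\,e_i$ and $\sigma_{ij}=(\lcm(g_i,g_j)/g_i)\,e_i-(\lcm(g_i,g_j)/g_j)\,e_j$ be the Taylor syzygies of $M$. A direct check gives the identity
\[\sigma_{ij}=\frac{\lcm(g_i,g_j)}{\lcm(g,g_j)}\,\sigma_{0j}-\frac{\lcm(g_i,g_j)}{\lcm(g,g_i)}\,\sigma_{0i},\]
where the fractions are genuine monomials because $g\mid\lcm(g_i,g_j)$ forces $\lcm(g,g_i)\mid\lcm(g_i,g_j)$ variable-by-variable. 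Hence every Taylor syzygy lies in the $S$-span of $\sigma_{01},\ldots,\sigma_{0r}$, so these $r$ elements generate the first syzygy module of $M$. They are $S$-linearly independent: in a relation $\sum_i a_i\sigma_{0i}=0$, the coefficient of $e_i$ is $-a_i\lcm(g,g_i)/g_i$, forcing $a_i=0$ since $S$ is a domain. This produces a free resolution $0\to S^r\to S^{r+1}\to S\to S/M\to 0$, proving $\pd(S/M)\leq 2$.

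The main obstacle is Case B: the structural decomposition that drives Case A degenerates for purely nondominant ideals, so the induction cannot stay entirely within the paper's framework. An attractive alternative would be to execute a synchronized family of consecutive cancellations in $\mathbb{T}_M$ pairing each face $[S]$ (with $g\notin S$ and $|S|\geq 2$) with $[S\cup\{g\}]$, since both have multidegree $\lcm(S)$; but verifying that these cancellations remain simultaneously valid appears to reduce to the same syzygy identity used above, suggesting that the direct syzygy argument is the cleanest way to close this last case.
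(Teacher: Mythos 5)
Note first that the paper does not prove this statement: it is left as a conjecture, and the only proved case is Theorem \ref{pd} (the $2$- and $3$-semidominant case), established via the first structural decomposition. Your argument is correct, and it in fact settles the conjecture by a route the paper does not take; moreover, your ``Case B'' argument alone does the whole job, since nothing in it uses pure nondominance. Writing $G=\{g,g_1,\ldots,g_r\}$ with $g\mid\lcm(g_i,g_j)$ for all pairs, the Taylor syzygies $\sigma_{ij}$ generate the first syzygy module of $M$ (exactness of $\mathbb{T}_M$ in homological degree one), your identity $\sigma_{ij}=\frac{\lcm(g_i,g_j)}{\lcm(g,g_j)}\sigma_{0j}-\frac{\lcm(g_i,g_j)}{\lcm(g,g_i)}\sigma_{0i}$ is verified by direct computation and has monomial coefficients precisely because $g\mid\lcm(g_i,g_j)$ forces $\lcm(g,g_i),\lcm(g,g_j)\mid\lcm(g_i,g_j)$, and the independence check on the $e_i$-coordinates shows $\sigma_{01},\ldots,\sigma_{0r}$ span the syzygy module freely; this yields the exact complex $0\to S^r\to S^{r+1}\to S\to S/M\to 0$, hence $\pd(S/M)\le 2$, while non-principality gives $\pd(S/M)\ge 2$. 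Consequently the induction and your Case A are redundant: Case A essentially re-runs the paper's proof of Theorem \ref{pd}, replacing the ad hoc treatment of the $j=0$ term (which is exactly where the paper's method is confined to $t\le 3$ nondominant generators) by the induction hypothesis, but that induction only closes because the syzygy identity already handles the purely nondominant situation -- and, as noted, it handles every situation. The comparison, then: the paper's structural decomposition localizes the difficulty in the ideal $M_1$ generated by the nondominant monomials and cannot resolve it in general, whereas your elementary Hilbert--Burch-style syzygy computation resolves it uniformly and even exhibits an explicit (not necessarily minimal) length-two free resolution; I would recommend presenting the syzygy argument by itself as the proof of the conjecture and dropping the case split.
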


\bigskip

\noindent \textbf{Acknowledgements}: I am deeply indebted to my wife Danisa for her support, from beginning to end of this project. Selflessly, she spent many hours typing many versions of this paper. With her common sense and her uncommon wisdom, she helped me to select the contents and organize the material. Time and time again, she turned my frustration into motivation. She is a true gift from God.
%%%%%%%%%%%%%%%%%%%%%%%%%%%%%%%%%%%%%%%%%%%%%%%%%%%%%%%%%%%%%%%%%%%%%%%%%%%%%%%%%%%%%%%

\end{document}